\theoremstyle{plain}
\newtheorem{theorem}{Theorem}[section]
\newtheorem{proposition}[theorem]{Proposition}
\newtheorem{lemma}[theorem]{Lemma}
\newtheorem{claim}{Claim}[section]
\newtheorem{corollary}[theorem]{Corollary}
\theoremstyle{definition}
\newtheorem{definition}[theorem]{Definition}
\theoremstyle{remark}
\newtheorem{remark}[theorem]{Remark}
\newtheorem{fact}{Fact}[section]
\numberwithin{equation}{section}
\newtheorem{theorem}{Theorem}[section]
\newtheorem{lemma}{Lemma}[section]
\newtheorem{remark}{Remark}[section]
\newlist{propenum}{enumerate}{1}
\setlist[propenum]{label=\alph*), ref=\theproposition~(\alph*)}
\definecolor{mygreen}{rgb}{0,0.65,0.66}
\definecolor{ForestGreen}{rgb}{0.1333,0.5451,0.1333}
\newcommand{\compresslist}{ 
\setlength{\itemsep}{1pt}
\setlength{\parskip}{0pt}
\setlength{\parsep}{0pt}
}
\newcommand\numberthis{\addtocounter{equation}{1}\tag{\theequation}} 
\global\long\def\defeq{\stackrel{\mathrm{{\scriptscriptstyle def}}}{=}} 
\newcommand{\inprod}[2]{\left\langle {#1} , {#2} \right\rangle}
\newcommand{\eat}[1]{}
\newcommand{\hide}[1]{{\color{red}Contents hidden!}}
\newcommand{\1}{\mathbf{1}}
\newcommand{\R}{\mathbb{R}}
\newcommand{\eps}{\varepsilon}
\DeclareMathOperator*{\argmin}{\arg\!\min}
\newcommand{\argmax}{\mathop{\mathrm{ argmax}}}
\global\long\def\E{\mathbb{E}}
\global\long\def\0{\mathbf{0}}%
\global\long\def\defeq{\stackrel{\textrm{def}}{=}}%
\global\long\def\norm#1{\|#1\|}%
\global\long\def\inprod#1#2{\langle#1,#2\rangle}%
\global\long\def\ktotal{{K}}%
\global\long\def\ma{\mathbf{A}}%
\global\long\def\mR{\mathbf{R}}%
\global\long\def\matM{\mathbf{M}}%
\global\long\def\mat{\ma^{\top}}%
\global\long\def\L{\mathcal{L}}%
\global\long\def\gap{\textrm{Gap}}%
\global\long\def\uk{\mathrm{U}_{k}}%
\global\long\def\lk{\mathrm{L}_{k}}%
\global\long\def\phio{\phi_{0}}%
\global\long\def\gk{\mathrm{G}_{k}}%
\global\long\def\g1{\mathrm{G}_{1}}%
\global\long\def\p{\mathcal{P}}%
\global\long\def\pa{\p}%
\global\long\def\la{\L}%
\global\long\def\gapa{\gap_{\la}}%
\global\long\def\cx{\mathcal{X}}%
\global\long\def\phitk{{\phi}_{{k}}}%
\global\long\def\phitkmj{{\phi}_{k-1, j}}
\global\long\def\phitkm{{\phi}_{{k-1}}}
\global\long\def\phitkj{{\phi}_{k, j}}
\global\long\def\phioj{{\phi}_{0, j}}
\global\long\def\phitmk{{\phi}_{k-1}}%
\global\long\def\psitkm{{\psi}_{{k-1}}}
\global\long\def\psitk{{\psi}_{{k}}}%
\global\long\def\phit{{\phi}}%
\global\long\def\psit{{\psi}}%
\global\long\def\gkm{\mathrm{G}_{k-1}}
\newcommand{\hx}{\widehat{\x}}
\global\long\def\xs{x^{\star}}
\global\long\def\ak{A_{k}}%
\global\long\def\ai{a_{i}}%
\global\long\def\aiuk{a_{i}^k}%
\global\long\def\aiukm{a_{i}^{k-1}}%
\global\long\def\aik{a_{{k}}}
\global\long\def\akuk{a_k^k}
\global\long\def\akm{A_{k-1}}%
\global\long\def\vx{\mathbf{x}}%
\global\long\def\vq{\mathbf{q}}%
\global\long\def\vc{\mathbf{c}}%
\global\long\def\ve{\mathbf{e}}%
\global\long\def\x{\mathbf{x}}
\global\long\def\vxh{\mathbf{\widehat{x}}}%
\global\long\def\vs{\mathbf{s}}%
\global\long\def\va{\mathbf{a}}%
\global\long\def\vb{\mathbf{b}}%
\global\long\def\vxs{\vx^{\star}}%
\global\long\def\vy{\mathbf{y}}%
\global\long\def\vu{\mathbf{u}}%
\global\long\def\vv{\mathbf{v}}%
\global\long\def\y{\mathbf{y}}
\global\long\def\u{\mathbf{u}}
\global\long\def\v{\mathbf{v}}
\global\long\def\xtktotal{\widetilde{\x}_{\ktotal}}%
\global\long\def\ytktotal{\widetilde{\y}_{\ktotal}}%
\global\long\def\xi{\x_{i}}%
\global\long\def\yi{\y_{i}}%
\global\long\def\xk{\x_{{k}}}%
\global\long\def\yk{\y_{{k}}}%
\global\long\def\ytk{\widetilde{\y}_{{k}}}%
\global\long\def\xtk{\widetilde{\x}_{{k}}}%
\global\long\def\oyi{\overline{\y}_{{i-1}}}%
\global\long\def\oykc{\overline{\y}_{{k-1}}}
\global\long\def\ykm{\y_{{k-1}}}%
\global\long\def\xkm{\x_{{k-1}}}%
\global\long\def\ykt{\y_{{k-2}}}%
\newcommand{\xki}[1]{[\xk]_{#1}}
\newcommand{\xkmi}[1]{[\xkm]_{#1}}
\global\long\def\eji{\mathbf{e}_{j_i}}
\global\long\def\ejk{\mathbf{e}_{j_k}}
\global\long\def\ejjk{\mathbf{1}_{j = j_k}}
\def\gF{{\mathcal{F}}}
\def\gX{{\mathcal{X}}}
\def\mLambda{{\mathbf{\Lambda}}}
\begin{document}

\ifdefined\isicml
\icmltitlerunning{A Fast Scale-Invariant Algorithm for Non-negative Least Squares with Non-negative Data}

\twocolumn[
\icmltitle{A Fast Scale-Invariant Algorithm for\\ Non-negative Least Squares with Non-negative Data} 
    \icmlkeywords{Non-negative least squares, width-independence, acceleration, variance reduction}
    \icmlsetsymbol{equal}{*}

    \begin{icmlauthorlist}
    \icmlauthor{Jelena Diakonikolas}{uwm}
    \icmlauthor{Chenghui Li}{uwm}
    \icmlauthor{Swati Padmanabhan}{uw}
    \icmlauthor{Chaobing Song}{uwm}
    \end{icmlauthorlist}
     \icmlaffiliation{uw}{Paul G. Allen School of Computer Science and Engineering, University of Washington, Seattle, WA, USA}
    \icmlaffiliation{uwm}{Department of Computer Sciences, University of Wisconsin-Madison, Madison, WI, USA}
    
    \icmlcorrespondingauthor{Jelena Diakonikolas}{jelena@cs.wisc.edu}
    
    \icmlkeywords{first-order optimization, non-negative linear regression, width-independence, acceleration, variance reduction}
    
     \vskip 0.3in
    ]
     \printAffiliationsAndNotice{}
\else
\title{A Fast Scale-Invariant Algorithm for \\
Non-negative Least Squares with Non-negative Data\thanks{Authors are ordered alphabetically.}} 
\author{
    Jelena Diakonikolas \thanks{\texttt{jelena@cs.wisc.edu} Department of Computer Sciences, University of Wisconsin-Madison, Madison, WI, USA} 
    \and 
    Chenghui Li \thanks{\texttt{cli539@wisc.edu} Department of Statistics, University of Wisconsin-Madison, Madison, WI, USA}
    \and 
    Swati Padmanabhan \thanks{\texttt{pswati@uw.edu} University of Washington, Seattle, WA, USA. Part of this work was done when visiting  Jelena Diakonikolas at the University of Wisconsin-Madison in Summer 2021.}
    \and
    Chaobing Song \thanks{\texttt{chaobing.song@wisc.edu} Department of Computer Sciences, University of Wisconsin-Madison, Madison, WI, USA}
}
\maketitle
\fi 

\begin{abstract}
    Nonnegative (linear) least square problems are a fundamental class of problems that is well-studied in statistical learning and for which solvers have been implemented in many of the standard programming languages used within the machine learning community. The existing off-the-shelf solvers view the non-negativity constraint in these problems as an obstacle and, compared to unconstrained least squares, perform additional effort to address it. However, in many of the typical applications, the data itself is nonnegative as well, and we show that the nonnegativity in this case makes the problem easier. In particular, while the oracle complexity of unconstrained least squares problems necessarily scales with one of the data matrix constants (typically the spectral norm) and these problems are solved to additive error, we show that nonnegative least squares problems with nonnegative data are  solvable to  multiplicative error and with complexity that is independent of any matrix constants. The algorithm we introduce is accelerated and based on a primal-dual perspective. We further show how to provably obtain linear convergence using adaptive restart coupled with our method and demonstrate its effectiveness on large-scale data via numerical experiments. 
\end{abstract}
\ifdefined\isicml
\else
\newpage 
\fi 
\section{Introduction}
Nonnegative least squares (NNLS) problems, defined by 
\begin{equation} 
    \min_{\vx\ge \0}\;\; 
\frac{1}{2}\|\ma \vx - \vb\|_2^2, \label{eq:nnls_original}
\end{equation}
where $\ma\in \R^{m\times n}$ and $\vb \in \R^m,$ 
are fundamental problems and have been studied for decades in optimization and statistical learning~\cite{lawson1995solving,Bro1997,kim2013non}, with various off-the-shelf solvers  available in standard packages of MATLAB (as \texttt{lsqnonneg}),  Python (as \texttt{optimize.nnls} in the SciPy package), and Julia (as \texttt{nnls.jl}). Within machine learning, NNLS problems arise whenever having negative labels is not meaningful, for example, when labels represent quantities like prices, age, pixel intensities, chemical concentrations, or frequency counts. NNLS is also widely used as a subroutine in nonnegative matrix factorization to extract sparse features in applications like image processing, computational biology, clustering, collaborative filtering, and community detection~\cite{gillis2014and}. 

From a statistical perspective, NNLS problems can be shown to possess a regularization property that enforces sparsity similar to  LASSO~\cite{tibshirani1996regression}, while being comparatively simpler, without the need to tune a regularization parameter or perform cross-validation~\cite{slawski2014nonnegative,bruckstein2008uniqueness,foucart2014sparse}.  

From an algorithmic standpoint, the nonnegativity constraint in NNLS problems is typically viewed as an obstacle: most NNLS algorithms need to perform additional work to handle it, and the problem is considered harder than unconstrained least squares. However, in many applications that use NNLS, the data is also nonnegative. This is true, for example, in problems arising in image processing, computational genomics, functional MRI, and in applications traditionally addressed using nonnegative matrix factorization.  

We argue in this paper that when the data for NNLS is nonnegative, it is in fact possible to obtain \emph{stronger} guarantees than for traditional least squares problems.

\subsection{Contributions}\label[sec]{sec:contributions}
We study NNLS problems with (element-wise) nonnegative data matrix $\ma$, to which we refer as the NNLS+ problems, through the lens of the (equivalent) quadratic problems: 
\begin{equation}\label[opt]{eq:main-problem}\tag{P}
    \min_{\vx \geq \0} \left\{\Bar{f}(\vx) \defeq \frac{1}{2}\|\ma \vx\|_2^2 - \vc^{\top} \vx\right\},
\end{equation}
where $\vc = \ma^\top \vb$ may be assumed element-wise positive. This assumption is without loss of generality, since if there were a coordinate $j$ of $\vc$ such that $c_j \leq 0$, then the $j^{\mathrm{th}}$ coordinate  of the gradient of $\Bar{f}$ would be nonnegative, implying an optimal solution $\vxs$ with $\xs_j = 0.$ Hence, we could fix $x_j = 0$ and optimize  over only the remaining coordinates.  

We further assume that the matrix $\ma$ is non-degenerate: none of its rows or columns has all elements equal to zero. This assumption is without loss of generality because (1) if such a row existed, we could remove it without affecting the objective, and (2) if the $j^{\mathrm{th}}$ column had all elements equal to zero, the optimal value of \eqref{eq:main-problem} would be $-\infty,$ obtained for $\x$ with $x_j \to \infty$. Having established our assumptions and setup, we now proceed to state our contributions, which are three-fold. 
\begin{description}[style=unboxed,leftmargin=0cm]
\item [{(1)}] \textbf{A Scale-Invariant, $\eps$-Multiplicative Algorithm.} We design an algorithm based on coordinate descent that, in total cost $O(\frac{\mathrm{nnz}(\ma)}{\sqrt{\epsilon}})$, constructs an  $\epsilon$-\emph{multiplicative} approximate solution to~\eqref{eq:main-problem}. Our algorithm capitalizes on our insights on the properties of~\eqref{eq:main-problem} that arise as a result of the nonnegativity of $\ma$.   
\begin{theorem}[Informal; see \cref{thm:FinalAKGKBound}]\label[thm]{thm:informal}
Given a matrix $\ma\in \R^{m\times n}_+$ and $\eps > 0$, define $f(\vx) =\frac{1}{2}\|\ma \vx\|_2^2 - \vc^{\top} \vx$ and $\vxs \in \argmin_{\vx \geq \0}f(\vx)$. Then,  there exists an algorithm that in $K = O(n\log n + \frac{n}{\sqrt{\eps}})$ iterations and $O\big(\textrm{nnz}(\ma)\big(\log n + \frac{1}{\sqrt{\eps}}\big)\big)$ arithmetic operations returns $\widetilde{\x}_K\in \R^n_+$ such that $\mathbb{E}\left[\inprod{\nabla f(\xtktotal)}{\xtktotal - \vxs}\right] \leq \eps |f(\vxs)|$. 
\end{theorem}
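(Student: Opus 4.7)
The plan is to prove~\cref{thm:informal} via a two-phase primal-dual accelerated coordinate method whose step sizes adapt to local coordinate-wise curvature, exploiting the entrywise nonnegativity of $\ma$ to eliminate any dependence on matrix constants such as $\|\ma\|$. First I would introduce a saddle-point reformulation of~\eqref{eq:main-problem} by dualizing the quadratic term via its Fenchel conjugate, $\frac{1}{2}\|\ma\vx\|_2^2 = \max_{\vy}\inprod{\vy}{\ma\vx} - \frac{1}{2}\|\vy\|_2^2$, yielding
\begin{equation*}
\min_{\vx\ge \0}\max_{\vy}\; L(\vx,\vy) \defeq \inprod{\vy}{\ma\vx} - \tfrac{1}{2}\|\vy\|_2^2 - \inprod{\vc}{\vx}.
\end{equation*}
Since $\ma,\vc \ge \0$, the optimal dual is $\vy^\star = \ma\vxs \ge \0$, so both blocks lie in the nonnegative orthant and the problem inherits the packing/covering LP geometry on which width-independent methods rely. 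The target bound $\inprod{\nabla f(\widetilde{\vx}_K)}{\widetilde{\vx}_K - \vxs} \le \eps|f(\vxs)|$ would then be read off from a shifted primal-dual gap, using the KKT identity $|f(\vxs)| = \tfrac{1}{2}\inprod{\vc}{\vxs}$ to identify the scale-free quantity that must appear on the right-hand side.

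With the reformulation in hand, I would run two phases. In the warm-start phase, $O(n\log n)$ randomized (or greedy) coordinate steps drive a logarithmic potential $\Phi(\vx) = \log\!\bigl(\mathrm{gap}(\vx,\vy(\vx))/|f(\vxs)|\bigr)$ down by a factor $(1-\Omega(1/n))$ per step, using adaptive step sizes $\eta_j \propto x_j/(\ma^\top\ma\vx)_j$ whose rescaling makes per-coordinate progress dimensionless and cancels the spectral scale of $\ma$ against the current iterate; this delivers an iterate within a constant multiplicative factor of the optimum. In the acceleration phase I would then run an accelerated randomized coordinate method on the saddle system in the spirit of Nesterov's scheme, with sampling probabilities and local smoothness constants matched to the data-adaptive curvature above, tracking the standard accelerated potential $A_k\cdot \mathrm{gap}(\vx_k,\vy_k) + D_k$, where $A_k = \Omega(k^2/n^2)$ and $D_k$ is an entropic or Euclidean Bregman divergence to $(\vxs,\vy^\star)$. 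A one-step supermartingale inequality $\mathbb{E}[A_{k+1}\mathrm{gap}_{k+1} + D_{k+1}\mid \mathcal{F}_k] \le A_k\mathrm{gap}_k + D_k$ telescopes to $\mathbb{E}[\mathrm{gap}_K] = O(n^2/K^2)\cdot|f(\vxs)|$, so $K = O(n/\sqrt{\eps})$ suffices. Each coordinate update touches only one column of $\ma$ and thus costs $O(\mathrm{nnz}(\ma)/n)$ amortized, yielding the claimed $O(\mathrm{nnz}(\ma)(\log n + 1/\sqrt{\eps}))$ total arithmetic bound.

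The main obstacle will be making the acceleration step genuinely scale-invariant. Textbook accelerated coordinate descent pays $L_j = \|\ma_{:j}\|_2^2$ in the step size, which reintroduces a matrix-norm factor and would destroy the multiplicative guarantee. To avoid this, I would replace $L_j$ by a data-adaptive quantity of the form $L_j(\vx_k) \asymp \inprod{\ma_{:j}}{\ma\vx_k}/x_{k,j}$, so that the local curvature constant scales with the iterate and the two factors cancel inside the gap, leaving only a dimensionless ratio. Verifying that this iterate-dependent $L_j$ is compatible with the extrapolation/momentum step --- i.e., that the one-step descent inequality still holds in expectation when the local norm itself evolves along the trajectory --- is where I expect the technical bulk of the argument to reside, most likely resolved through a carefully chosen Bregman divergence aligned with the covering-LP geometry.
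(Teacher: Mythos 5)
Your reformulation, your target identity $|f(\vxs)|=\tfrac12\inprod{\vc}{\vxs}$, and your overall potential-function scaffolding all point in the right direction, but the plan misdiagnoses where scale-invariance actually comes from, and the fix you propose is precisely the step that is not known to work. You assert that using the coordinate Lipschitz constants $L_j=\|\ma_{:j}\|_2^2$ ``would destroy the multiplicative guarantee'' and therefore propose iterate-dependent curvatures $L_j(\vx_k)\asymp \inprod{\ma_{:j}}{\ma\vx_k}/x_{k,j}$. This is the packing/covering multiplicative-update geometry, and making such trajectory-dependent local norms compatible with Nesterov-type extrapolation is exactly the open difficulty of width-independent \emph{accelerated} positive LP solvers; the known analyses along that route give $1/\eps$ rates (cf.\ the 1-fair-covering result cited in the paper, which achieves $\tilde O(\mathrm{nnz}(\ma)/\eps)$), not $1/\sqrt{\eps}$. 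So the ``technical bulk'' you defer is not a routine Bregman-divergence choice --- it is the part of your plan most likely to fail.

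The paper's proof keeps the fixed constants $L_j=\|\ma_{:j}\|_2^2$ (they appear in $\mLambda$ and in $\phi_0(\vx)=\tfrac12\|\vx-\vx_0\|_\mLambda^2$) and obtains the multiplicative, scale-free bound from a structural property you never isolate: by nonnegativity of $\ma$ and complementary slackness, $\xs_j\le 1/\|\ma_{:j}\|_2^2$ for every $j$ (\cref{enu:x-in-box}). With $\x_0=\0$ this gives
\begin{equation*}
\phi_0(\vxs)=\tfrac12\textstyle\sum_j \|\ma_{:j}\|_2^2(\xs_j)^2 \le \tfrac12\textstyle\sum_j \xs_j = |f(\vxs)|,
\end{equation*}
so the matrix constants in the step sizes cancel against the box bound on $\vxs$ inside the \emph{initial distance}, and the standard accelerated rate $\phi_0(\vxs)/A_K$ with $A_K=\Omega((K-\tfrac52 n\log n)^2/n^2)$ immediately becomes $\eps|f(\vxs)|$ after $K=O(n\log n+n/\sqrt{\eps})$ iterations. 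Relatedly, your separate $O(n\log n)$ warm-start phase with a logarithmic potential has no counterpart in the paper and is unnecessary: the $n\log n$ term there is pure step-size bookkeeping (counting how many geometric ``Type I'' increments of $a_k$ occur before the quadratic growth regime of $A_k$ kicks in), not an algorithmic phase producing a constant-factor solution. To repair your proposal, drop the iterate-dependent curvature and the warm-start phase, and instead prove and exploit the bound $\xs_j\le 1/\|\ma_{:j}\|_2^2$ to control the initial Bregman term.
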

The application of our structural observations on~\eqref{eq:main-problem} to Theorem $4.6$ of~\cite{diakonikolas2019approximate} enables the recovery of our guarantee on the optimality gap; however, we provide a guarantee on the primal-dual gap, and this is \emph{stronger} than the one on the optimality gap stated in \cref{thm:informal}. What is significant about~\cref{thm:informal} is the \emph{invariance} of the computational complexity to the scale of $\ma$---it does not depend on any matrix constants. This cost stands in stark contrast to that of traditional least squares, where the dependence of (oracle) complexity on matrix constants (specifically, the spectral norm of $\ma$ in the Euclidean case) is \emph{unavoidable}~\cite{nemirovsky1983problem}, and multiplicative approximation is \emph{not possible in general}.\footnote{To see why multiplicative approximation is not possible for general problems that are not nonnegative, consider the case where the optimal objective value of the problem is equal to zero. Then any problem with a multiplicative guarantee of the form in \cref{thm:informal} would necessarily  return an optimal solution.} On the conceptual front, our algorithm is a new acceleration technique inspired by the variance-reduced primal-dual algorithm of~\cite{song2021variance}. 

\item [{(2)}] \textbf{Linear Convergence with Restart.} By adopting adaptive restart in~\eqref{eq:main-problem}, we  improve the guarantee of~\cref{thm:informal} to one with linear convergence  (with $\log(1/\eps)$ complexity). Thus, we establish the first theoretical guarantee for NNLS+ that simultaneously satisfies the properties of being \emph{scale-invariant, accelerated, and linearly-convergent}. 
\begin{theorem}[Informal; see \cref{{thm:restarts}}]\label[thm]{thm:informalRestarts}
Consider the setup of~\cref{thm:informal}. Then,  there is an algorithm that in expected $O(\mathrm{nnz}(\ma)(\log n + \frac{\sqrt{n}}{\mu})\log(\frac{1}{\epsilon}))$ arithmetic operations returns $\widetilde{\x}_K\in \R^n_+$ with $f(\widetilde{\x}_K) - f(\vxs) \leq \eps |f(\vxs)|$, where $\mu$ is the constant in a local error bound for \eqref{eq:main-problem}. 
\end{theorem}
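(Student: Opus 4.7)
The plan is to use the algorithm of \cref{thm:informal} as an inner solver inside a restart loop, leveraging a local error bound for~\eqref{eq:main-problem} to convert its multiplicative primal-dual gap guarantee into a geometric contraction of the objective gap across epochs.

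First, I would establish a local error bound of the form $\mu (f(\vx) - f(\vxs)) \le \inprod{\nabla f(\vx)}{\vx - \vxs}$ (or a closely related variant that is homogeneous with respect to $|f(\vxs)|$) for $\vx$ in a neighborhood of $\vxs$. Since \eqref{eq:main-problem} is a linearly constrained convex quadratic program, its restriction to the face of $\R^n_+$ determined by the support of $\vxs$ is a (possibly degenerate) quadratic that admits a Hoffman-type error bound; the constant $\mu$ then captures the worst-case conditioning of this restricted quadratic together with the polyhedral geometry of the feasible face. The nonnegativity of $\ma$ should be exploited here in the same way as in \cref{thm:informal}, so that the bound is compatible with the scale-invariant, multiplicative notion of accuracy.

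Second, I would run one epoch of the algorithm from \cref{thm:informal} starting from the current iterate $\widetilde{\x}^{(t)}$, targeting an accuracy $\eps_t$ chosen so that, when combined with the error bound of the first step, the expected objective gap contracts by a constant factor. Concretely, since the inner algorithm returns $\widetilde{\x}$ with $\mathbb{E}[\inprod{\nabla f(\widetilde{\x})}{\widetilde{\x} - \vxs}] \le \eps_t |f(\vxs)|$, the error bound yields an inequality of the form $\mathbb{E}[f(\widetilde{\x}) - f(\vxs)] \le c (\eps_t/\mu)|f(\vxs)|$; calibrating $\eps_t$ to the correct polynomial in $\mu$ and $n$ so that one epoch halves the (relative) objective gap will produce a per-epoch arithmetic cost of $O(\mathrm{nnz}(\ma)(\log n + \sqrt{n}/\mu))$, matching the $\sqrt{n}/\mu$ exponent in the theorem. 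Iterating for $O(\log(1/\eps))$ epochs then yields the claimed total complexity. Since $\mu$ is typically unknown a priori, the restart schedule can be made adaptive via the standard doubling trick on the epoch length, triggered by a computable proxy for the gap (e.g., the primal-dual gap that is already maintained by the iterates of the inner algorithm); this loses only a constant-factor overhead. A standard Markov/rejection argument upgrades the expected per-epoch contraction to a statement strong enough to concatenate across epochs.

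The main obstacle will be the first step: although~\eqref{eq:main-problem} is merely convex (in general $\ma^\top \ma$ is singular), one must certify a local error bound whose constant $\mu$ is scale-invariant and compatible with the multiplicative notion in \cref{thm:informal}, not with the additive bounds typical of the strong-convexity or quadratic growth literature. Propagating this $\mu$ through the per-epoch choice of $\eps_t$ in a way that reproduces exactly the $\sqrt{n}/\mu$ scaling, rather than a weaker $1/\mu$ or $\sqrt{n}/\sqrt{\mu}$ factor, will require careful bookkeeping that simultaneously tracks the Hoffman constant of the active face and the precise relationship between the algorithm's primal-dual gap and the true objective gap.
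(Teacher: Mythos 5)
Your overall architecture---inner solver plus restarts plus a local error bound plus a Markov argument to control the expected epoch length---is the same as the paper's, and your back-of-the-envelope calibration does recover the $\sqrt{n}/\mu$ scaling. But the central technical ingredient is wrong as stated. The inequality you propose to establish, $\mu\,(f(\vx)-f(\vxs)) \le \inprod{\nabla f(\vx)}{\vx-\vxs}$, holds trivially with $\mu=1$ for any convex $f$, so it carries no information and cannot drive a contraction. What the restart argument actually needs is a bound in the \emph{other} direction: control of the distance $\|\vx-\vxs\|_{\mLambda}$ (which is what enters the inner algorithm's rate through $\phi_0(\vxs)=\tfrac12\|\vx_0-\vxs\|_{\mLambda}^2$) in terms of a \emph{computable} residual. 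The paper obtains this by recasting \eqref{eq:main-problem} as a linear complementarity problem (\cref{prop:LCP-connection}) and invoking the Mangasarian--Ren natural-residual error bound (\cref{thm:error-bnd}), $r(\vx)\ge\mu\|\vx-\vxs\|_{\mLambda}$ with $r(\vx)=\|\vx-(\vx-\mLambda^{-1}\nabla f(\vx))_+\|_{\mLambda}$; verifying the hypothesis of that theorem (that $\mathrm{LCP}(\matM,\0)$ has only the zero solution) is exactly where the nonnegativity and nondegeneracy of $\ma$ enter. Your Hoffman-type sketch gestures at the right family of results, but you never write down an inequality of the required orientation.

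A second, related gap is the restart trigger. You propose contracting the objective gap and monitoring ``the primal-dual gap that is already maintained by the iterates,'' but $f(\vx)-f(\vxs)$ is not computable and the paper's gap estimate $\gk(\vu,\vv)$ is evaluated at $\vu=\vxs$, which is unknown---so neither is a usable stopping certificate. The paper restarts when the computable natural residual $r$ is halved, and then needs two extra lemmas you omit to close the loop: $r(\vx)\le\sqrt{2n(f(\vx)-f(\vxs))}$ (\cref{prop:rleqsqrtf}), which combined with \cref{thm:FinalAKGKBound} and the error bound gives $\E[A_k r^2(\xtk)]\le \frac{n}{\mu^2}r^2(\vx_0)$ and hence, via $\E[K]=\sum_i\Pr[K>i]$ and the growth rate of $A_k$, an expected epoch length of $O(n\log n + n\sqrt{n}/\mu)$; and a converse bound $f(\vx-\mR(\vx))-f(\vxs)\le\bigl((n-1)+\tfrac{n+1}{\mu}\bigr)r^2(\vx)$ together with $r(\0)\le\sqrt{2n|f(\vxs)|}$ to convert the final residual guarantee into the multiplicative statement $f(\widetilde{\vx}_K)-f(\vxs)\le\eps|f(\vxs)|$. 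Without these conversions in both directions between $r$, the distance, and the objective gap, the doubling-trick scheme you describe has no verifiable stopping rule and no route to the claimed multiplicative accuracy.
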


Proving this bound requires bounding the expected number of iterations between restarts in conjunction with careful technical work in the identification of an appropriate local error bound applicable to NNLS+. 

\item [{(3)}] \textbf{Numerical Experiments.} We consolidate our theoretical contributions with a demonstration of the empirical advantage of our restarted method over state-of-the-art solvers using numerical experiments on datasets from LibSVM with sizes up to $19996 \times 1355191$. Figure~\ref{fig:2} shows that, when combined with the restart strategy, our algorithm significantly outperforms the compared algorithms.  
\end{description}

\subsection{Related Work}\label[sec]{sec:RelatedWork}
NNLS has seen a large body of work on the empirical front. The first method that was widely adopted in practice (including in the \texttt{lsqnonneg} implementation of MATLAB) is due to the seminal work of~\cite{lawson1995solving} (originally published in 1974). This method, based on active sets, solves NNLS via a sequence of (unconstrained) least squares problems and has been followed up by~\cite{Bro1997,van2004fast,myre2017tnt,dessole2021lawson}  with improved empirical performance. 

While these variants are generally effective on small to mid-scale problem instances, they are not suitable for extreme-scale problems ubiquitous in machine learning. For example, in the experiments reported in~\cite{myre2017tnt}, Fast NNLS~\cite{Bro1997} took 6.63 days to solve a problem of size $45000\times 45000,$ while the TNT-NN algorithm~\cite{myre2017tnt} took 2.45 hours. However, the latter requires computing the Cholesky decomposition of $\ma^\top\ma$ at initialization, which can be prohibitively expensive, both in computation and in memory. Further, the experiments  reported are for problems with strong convexity, a property not satisfied by \emph{underdetermined} systems, where NNLS is typically used. Another  prominent work on the empirical front is that of~\cite{kim2013non}, which performs projected gradient descent with modified Barzilai-Borwein steps~\cite{barzilai1988two} and step sizes given by a carefully designed sequence of diminishing scalars.

\paragraph{Other Related Work.} To the best of our knowledge, theoretical guarantees explicitly published for \cref{eq:main-problem} have been scarce. For instance,~\cite{kim2013non} and~\cite{myre2017tnt} mentioned in the preceding paragraph provide only asymptotic convergence guarantees. In an orthogonal line of work, the result on 1-fair covering  by~\cite{diakonikolas2020fair} solves the problem dual to NNLS+, which also gives a multiplicative guarantee for NNLS+, but with the overall complexity $\Tilde{O}(\frac{\mathrm{nnz}(\ma)}{\epsilon})$, where $\Tilde{O}(\cdot)$ hides poly-log factors. 

Since our algorithm is based on the coordinate descent algorithm, we highlight some results of other coordinate descent algorithms when specialized to the closely related problem of \emph{unconstrained linear regression}. The pioneering work of~\cite{nesterov2012efficiency} proposed a coordinate descent method called \texttt{RCDM}, which in our setting has an iteration cost $O\Big(\frac{{\sum_{j=1}^n \|\ma_{:j}\|_2^2}}{{\eps}}\|\x_0 - \x^\star\|^2\Big)$, where $\|\ma_{:j}\|_2$ is the Euclidean norm of the $j^{\mathrm{th}}$ column of $\ma$. This was improved by~\cite{lee2013efficient}, in an algorithm termed \texttt{ACDM}, by combining Nesterov's estimation technique~\cite{Nesterov1983} and coordinate sampling, giving an iteration complexity of $O\Big(\frac{\sqrt{n\sum_{j=1}^n \|\ma_{:j}\|_2^2}}{\sqrt{\eps}}\|\x_0 - \x^\star\|\Big)$ for solving \cref{eq:main-problem}. The latest results in this line of work by~\cite{allen2016even, qu2016coordinate, nesterov2017efficiency} perform non-uniform sampling atop a framework of ~\cite{nesterov2012efficiency} and achieve iteration complexity of $O\Big(\frac{\sqrt{\sum_{j=1}^n \|\ma_{:j}\|_2^2}}{\sqrt{\eps}}\|\x_0 - \x^\star\|\Big)$, with \cite{diakonikolas2018alternating} dropping the dependence on $\max_{1\leq j\leq n}\|\ma_{:j}\|_2$. Additionally, the work of ~\cite{lin2014accelerated} develops an accelerated randomized proximal coordinate gradient (\texttt{APCG}) method to minimize composite convex functions.

As remarked earlier,~\cite{diakonikolas2019approximate}, coupled with insights on NNLS+ problems provided in this work, can recover our guarantee for the optimality gap from Theorem~\ref{thm:FinalAKGKBound}.

However, our work is the first to bring to the fore the properties of NNLS+ \emph{required} to get such a guarantee, and our choice of primal-dual perspective allows for a stronger guarantee in terms of an upper bound on the primal-dual gap. Further, our algorithm is a novel type of acceleration, with our primal-dual perspective transparently illustrating our use of the aforementioned properties. We believe that these technical contributions, along with our techniques to obtain vastly improved theoretical guarantees with the restart strategy applied to this problem, are valuable to the broader optimization and machine learning communities.  

\ifdefined\isicml
\else
\paragraph{Paper Organization.} \cref{sec:prelims} states our notation and problem simplification, with \cref{sec:ObjProps} and \cref{sec:pdgapperspective} laying out the technical groundwork for our algorithm and analysis sketch that follow in \cref{sec:alg-ana}. The primary export of our paper is in \cref{sec:GapBound}. Our restart strategy that strengthens our main result is in \cref{sec:Restart}. We conclude with numerical experiments in \cref{sec:num-exps}. All proofs are relegated to \cref{sec:app-pfs-prop-props},  \cref{sec:AppRestart}, and \cref{sec:AlgImplementation}. 
\fi 
\section{Notation and Preliminaries}\label[sec]{sec:prelims}

Throughout the paper, we use bold lowercase letters to denote vectors and bold uppercase letters for matrices.  For vectors and matrices, the operator $'\geq'$ is applied element-wise, and $\R_+$ is the non-negative part of the real line. We use $\inprod{\va}{\vb}$ to denote the inner product of vectors  $\va$ and $\vb$ and $\nabla$ for the gradient of a function.  Given a matrix $\ma$,  we use $\ma_{:j}$ for its $j^{\mathrm{th}}$ column vector, and for a vector $\vx$,  $x_j$ denotes its $j^{\mathrm{th}}$ coordinate. We use $\xk$ for the vector in the $k^{\mathrm{th}}$ iteration and, to disambiguate indexing, use $[\xk]_j$ to mean the $j^{\mathrm{th}}$ coordinate of $\xk$. The $i^{\mathrm{th}}$ standard basis vector is denoted by $\mathbf{e}_i$. For an $n$-dimensional vector $\x$ and $\ma\in \R^{m \times n}$, we define $\mLambda = \mathrm{diag}([\|\ma_{:1}\|_2^2, \dots, \|\ma_{:n}\|_2^2])$ and  $\|\x\|_{\mLambda}^2 = \sum_{i = 1}^n x_i^2 \|\ma_{:i}\|^2_2$. Finally, $[n] \defeq \{1, 2, \dotsc, n\}$. 

\subsection{Useful Definitions and Facts}\label[sec]{sec:DefsAndFacts}
We now review some relevant definitions and facts. A differentiable function $f:\R^n \to\R$ is convex if for any  $\vx, \vxh \in \R^n,$ we have $f(\vxh) \geq f(\vx) + \inprod{\nabla f(\vx)}{ \vxh - \vx}.$ A differentiable function $f:\R^n \to\R$ is said to be $\mu$-strongly convex w.r.t.~the $\ell_2$-norm if for any  $\vx, \vxh \in \R^n,$ we have that $ f(\vxh) \geq f(\vx) + \inprod{\nabla f(\vx)}{ \vxh - \vx} + \frac{\mu}{2}\|\vx-\vxh\|_2^2.$ We have analogous definitions for concave and strongly concave functions, which flip the inequalities noted. 

Given a convex optimization problem $\min_{\vx \in \cx} f(\vx)$, where $f:\R^n \to\R$ is differentiable and convex and $\cx \subseteq \R^n$ is closed and convex, the first-order optimality condition of a solution $\vxs \in \argmin_{\vx \in \cx} f(\vx)$ can be expressed as
\begin{equation}\label[ineq]{eq:first-order-opt-cond}
    (\forall \vx \in \cx):\quad \inprod{\nabla f(\vxs)}{\vx - \vxs} \geq 0. 
\end{equation}

\subsection{Problem Setup}\label{sec:problem-setup}

As discussed in the introduction, our goal is to solve \cref{eq:main-problem}, with $\ma\in \R^{m \times n}_+$. For notational convenience, we work with the problem in the following scaled form:
\begin{equation}
\min_{\vx \in \R^n_+} \Big\{f(\vx)\defeq\frac{1}{2}\|\ma\vx\|_2^2 - \1^\top \vx \Big\},\label[opt]{eq:sim_obj}
\end{equation} 
This assumption is w.l.o.g.~since (assuming w.lo.g.~that $\vc > \0$ and $\ma$ is non-degenerate; see Section~\ref{sec:contributions}) any \cref{eq:main-problem} can be brought to this form by a simple change of variable $\hat{x}_j = c_j x_j.$ This scaling also affects the matrix entries with column $j$ being divided by $c_j$, and they remain non-negative. Further, the scaling need not be explicit in the algorithm since the change of variable $\hat{x}_j = c_j x_j$ is easily reversible.  

\subsection{Properties of the Objective}\label[sec]{sec:ObjProps}
To kick off our analysis, we highlight some properties inherent to the objective defined in \cref{eq:sim_obj}. These are central to obtaining a scale-invariant algorithm for~\cref{eq:main-problem}.
\begin{restatable}{proposition}{propproperties}\label[prop]{prop:properties-object}
Given $f:\R^n_+ \rightarrow \R$ as defined in \cref{eq:sim_obj} and $\vxs \in \argmin_{\vx \in \R^n_+} f(\vx)$, the following statements all hold. 
\begin{propenum}
\compresslist{
\item \label[prop]{enu:grad-nonnegative}
$\nabla f(\vxs)\geq\0.$ 
\item \label[prop]{enu:optval}
$f(\vxs)=-\frac{1}{2}\|\ma \vxs\|_2^2=-\frac{1}{2}\1^{\top}\vxs.$ 
\item \label[prop]{enu:x-in-box} 
for all $j\in [n]$, we have $\xs_{j}\in\left[0,\frac{1}{\norm{\ma_{:j}}_2^{2}}\right]$. 
\item \label[prop]{enu:optval-range} 
$-\frac{1}{2}\sum_{j\in [n]}\frac{1}{\norm{\ma_{:j}}_2^{2}}\leq f(\vxs)\leq -\frac{1}{2 \min_{j \in [n]} \|\ma_{:j}\|_2^2}.$ 
}
\end{propenum}
\end{restatable}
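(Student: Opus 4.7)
The plan is to prove the four parts in order, exploiting first-order optimality and complementary slackness, and using nonnegativity of $\ma$ crucially only in part (c). Throughout, I would use that $\nabla f(\vx) = \ma^\top \ma \vx - \1$.

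For part (a), I would invoke the first-order optimality condition~\eqref{eq:first-order-opt-cond} for minimization over $\R^n_+$: $\inprod{\nabla f(\vxs)}{\vx - \vxs} \geq 0$ for all $\vx \geq \0$. Testing this with $\vx = \vxs + \mathbf{e}_j$ for each $j \in [n]$ gives $[\nabla f(\vxs)]_j \geq 0$, proving (a). Simultaneously, testing with $\vx = \0$ and $\vx = 2\vxs$ yields $\inprod{\nabla f(\vxs)}{\vxs} = 0$ (complementary slackness), which I'll reuse below.

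For part (b), I would expand $\inprod{\nabla f(\vxs)}{\vxs} = 0$ to get $\|\ma \vxs\|_2^2 = \1^\top \vxs$, and plug this into the definition of $f$ to obtain
\[
f(\vxs) = \tfrac{1}{2}\|\ma\vxs\|_2^2 - \1^\top \vxs = -\tfrac{1}{2}\1^\top \vxs = -\tfrac{1}{2}\|\ma\vxs\|_2^2.
\]
For part (c), the lower bound $\xs_j \geq 0$ is immediate from feasibility. For the upper bound, if $\xs_j = 0$ there is nothing to show, so suppose $\xs_j > 0$. Combining $\xs_j > 0$ with part (a) and complementary slackness $\sum_j \xs_j [\nabla f(\vxs)]_j = 0$ forces $[\nabla f(\vxs)]_j = 0$, i.e., $[\ma^\top \ma \vxs]_j = 1$. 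Here is the one place the nonnegativity of $\ma$ enters: since every entry of $\ma$ and of $\vxs$ is nonnegative,
\[
1 = [\ma^\top \ma \vxs]_j = \sum_{k \in [n]} \inprod{\ma_{:j}}{\ma_{:k}} \xs_k \geq \inprod{\ma_{:j}}{\ma_{:j}} \xs_j = \|\ma_{:j}\|_2^2\, \xs_j,
\]
and rearranging gives the desired bound. This is the one conceptual step I expect to require a moment of thought; everything else is essentially mechanical.

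For part (d), the upper bound follows from optimality of $\vxs$: plug the feasible point $\vx = \frac{1}{\|\ma_{:j^\ast}\|_2^2}\mathbf{e}_{j^\ast}$ with $j^\ast \in \argmin_{j \in [n]} \|\ma_{:j}\|_2^2$ into $f$ to obtain $f(\vxs) \leq f(\vx) = -\frac{1}{2\min_{j\in[n]}\|\ma_{:j}\|_2^2}$. The lower bound follows by combining part (b) with the coordinate-wise cap from part (c):
\[
f(\vxs) = -\tfrac{1}{2}\1^\top \vxs = -\tfrac{1}{2}\sum_{j \in [n]} \xs_j \geq -\tfrac{1}{2}\sum_{j \in [n]} \tfrac{1}{\|\ma_{:j}\|_2^2}.
\]
The nondegeneracy assumption from \cref{sec:contributions} ensures $\|\ma_{:j}\|_2 > 0$ for all $j$, so all expressions are well-defined.
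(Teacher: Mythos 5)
Your proposal is correct and follows essentially the same route as the paper's proof: first-order optimality gives (a) and complementary slackness, which yield (b); the nonnegativity of $\ma$ turns the stationarity condition $\inprod{\ma_{:j}}{\ma\vxs}=1$ into the box bound (c); and (d) combines (b), (c), and evaluation at the feasible point $\frac{1}{\|\ma_{:j^\ast}\|_2^2}\mathbf{e}_{j^\ast}$. The only cosmetic difference is that you obtain $\inprod{\nabla f(\vxs)}{\vxs}=0$ by testing $\vx=\0$ and $\vx=2\vxs$, whereas the paper derives the coordinatewise statement directly by a one-coordinate perturbation; both are standard and equivalent.
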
 
The validity of division by $\|\ma_{:j}\|_2$ in the preceding proposition is by the non-degeneracy of $\ma$ (see \cref{sec:contributions}). We prove this proposition in \cref{sec:appprelims}. 

An important consequence of~\cref{enu:x-in-box} is that \cref{eq:main-problem} can be restricted to the hyperrectangle $\mathcal{X} = \{\x \in \R^n: 0\leq x_j \leq \frac{1}{\|\ma_{:j}\|_2^2}\}$ without affecting its optimal solution, but effectively reducing the search space. Thus, going forward, we replace the constraint $\vx \geq \0$ in \cref{eq:main-problem} by $\vx \in \gX.$   

\subsection{Primal-Dual Gap Perspective}\label[sec]{sec:pdgapperspective}
As alluded to earlier, our algorithm is analyzed through a primal-dual perspective. For this reason, it is useful to consider the Lagrangian
\[
\la(\x,\y)=\inprod{\ma\x}{\y}-\frac{1}{2}\norm{\y}_{2}^{2}-\1^{\top}\x \numberthis\label[eq]{eq:lagrangian}
\]
from which we can derive our rescaled problem~\cref{eq:sim_obj}, which is precisely the primal problem $\min_{\vx\in \gX} \pa(\x),$ where
\ifdefined\isicml
\begin{align*}
\pa(\x) & 
=\max_{\y\in \mathcal{Y}}\la(\x,\y)
  =\frac{1}{2}\norm{\ma\x}_2^{2}-\1^{\top}\x. 
\end{align*} 
\else
\begin{align*}
\pa(\x) & =\max_{\y\in \mathcal{Y}}\la(\x,\y) =-\1^{\top}\x+\max_{\y\geq\0}\left[-\frac{1}{2}\norm{\y}_{2}^{2}+\inprod{\ma\x}{\y}\right] =-\1^{\top}\x+\frac{1}{2}\norm{\ma\x}^{2}.
\end{align*} 
\fi 
Similar to~\cite{song2021variance}, we use \cref{eq:lagrangian} to define the following relaxation of the primal-dual gap, for arbitrary but fixed $\vu \in \gX$, and $\vv \in \R^m$: 
\[\gapa^{(\u,\v)}(\x,\y)\defeq\la(\x,\v)-\la(\u,\y).\numberthis\label[eq]{def:gapFn}\] 
The significance of this relaxed gap function is that for a candidate solution $\widetilde{\vx}$ and an arbitrary $\widetilde{\vy} \in \R^m,$  a bound on $\gapa^{(\u,\v)}(\widetilde{\x},\widetilde{\y})$ translates to one on the primal error, as follows. First select $\vu = \vxs,$ $\vv = \ma \widetilde{\vx}.$ Then, by observing that $\la(\widetilde{\x}, \ma \widetilde{\x}) = f(\widetilde{\x})$ and (by Lagrangian duality):
\begin{equation}\label[ineq]{eq:lagrangian-ub}
    \la(\vxs, \widetilde{\vy}) \leq \max_{\vy \in \R^m}\la(\vxs, \vy) = f(\vxs),
\end{equation}
we have the primal error bound: 
\begin{equation}\label[ineq]{eq:gap-to-opt-gap}
\begin{aligned}
    f(\widetilde{\x}) - f(\vxs) &\leq \la(\widetilde{\x}, \ma \widetilde{\x}) -  \la(\vxs, \widetilde{\vy})\\
    &= \gapa^{(\vxs,\ma \widetilde{\x})}(\widetilde{\x},\widetilde{\y}). 
\end{aligned}
\end{equation}
In light of this connection, our algorithm for bounding the primal error is one that generates iterates that can be used to construct bounds on the $\gapa^{(\u,\v)}(\widetilde{\x},\widetilde{\y})$, as we detail next. 

\section{Our Algorithm and Convergence Analysis} \label[sec]{sec:alg-ana}
Our algorithm, Scale Invariant NNLS+ (SI-NNLS+), is described as follows for iterations $k \geq 1$:
\begin{tcolorbox}
\textbf{Conceptual Iteration of SI-NNLS+}
\begin{equation}\label[eq]{eq:meta-algo}
    \begin{aligned}
        \vx_k &= \argmin_{\vx \in \gX} \phitk(\vx),\\
        \vy_k &= \argmin_{\vy \in \R^m} \psitk(\vy),
    \end{aligned}
\end{equation}
\end{tcolorbox}
where $\phitk(\vx)$ and $\psitk(\vy)$ are the estimate sequences derived from our analysis, as outlined in \cref{sec:GapEstConst}.

We use our algorithm's iterates from~\cref{eq:meta-algo} to construct $\gk$,  an upper estimate of $\gapa^{(\vu, \vv)}(\widetilde{\vx}_k, \widetilde{\vy}_k),$ where $\widetilde{\vx}_k, \widetilde{\vy}_k$ are the  convex combinations of the iterates and  $\vu \in \gX, \vv \in \R^m$ are a fixed pair of points. As stated in \cref{eq:gap-to-opt-gap}, our motivation for constructing $\gk(\vu, \vv)\geq \gapa^{(\vu, \vv)}(\widetilde{\vx}_k, \widetilde{\vy}_k)$ is that $\gapa^{(\vu, \vv)}(\widetilde{\vx}_k, \widetilde{\vy}_k)$ bounds our final primal error from above. 

Our analysis then bounds the upper estimate as $\gk \leq \frac{Q}{\ak},$ where $\ak$  is a sequence of positive numbers and $Q$ is bounded. To obtain the claimed multiplicative approximation guarantee, we use  \cref{eq:gap-to-opt-gap} and further argue that for $\vu = \vxs, \vv = \ma \widetilde{\vx}_k,$ we have $Q\leq O\big({|f(\vx^*)|}\big).$ 

Our algorithm can be interpreted as a variant of the VRPDA$^2$ algorithm from~\cite{song2021variance}, and our analysis technique is inspired by the approximate duality gap technique from~\cite{diakonikolas2019approximate}; however, our  construction of $\gk$ is novel and based on bounding the primal-dual gap (instead of the optimality gap). 

We now provide a construction of the gap estimate $\gk,$ then analyze bounding it. A fully specified variant of  SI-NNLS+  suitable for the analysis is provided in \cref{alg:SI-NNLS-analysis}, with proofs in \cref{sec:mainAnalysisProofsAppendix} and \cref{sec:akgrowth}. We give an implementation version of \emph{Lazy} SI-NNLS+ and associated analysis in  \cref{sec:AlgImplementation}. 
\begin{algorithm*}[tb] 
  \caption{Scale Invariant Non-negative Least Squares with Non-negative Data (SI-NNLS+)}
  \label[alg]{alg:SI-NNLS-analysis}
\begin{algorithmic}
  \STATE {\bfseries Input:} Matrix $\ma \in \R_+^{m \times n}$ with $n \geq 4$, accuracy $\epsilon$, initial point $\x_0$
  \STATE {\bfseries Output:} Vector $\xtktotal\in \R^n_+$ such that $f(\xtktotal) \leq (1+\epsilon)|f(\vxs)|$ for $f$ defined in \cref{eq:sim_obj} and $\x^\star \in \arg\min_{\vx\geq \0} f(\x)$. 
  \STATE Initialize: $\widetilde{\vx}_0 = \vx_0$, $\overline{\y}_{0} = \y_0 = \ma \vx_0 $, $\ktotal = \frac{5}{2}n \log n + \frac{6n}{\sqrt{\eps}}$, $a_1 = \frac{1}{\sqrt{2} n^{1.5}}$, $a_2 = \frac{a_1}{n-1}$, $A_0 = 0$, $A_1 = a_1$, ${\phi}_0(\x) = \frac{1}{2}\|\vx - \vx_0\|^2_{\mLambda}$. 
  \FOR{$k = 1, 2, \dots, \ktotal$}
  \STATE Sample $j_k$ uniformly at random from $ \{1, 2, \dots, n\}$
  \STATE $\xk \leftarrow \arg\min \left\{\phitk(\x) \stackrel{\textrm{def}}{=} a_1 \inprod{\ma^\top \overline{\vy}_{0}- \1}{\x} +\sum_{i= 2}^k n \ai\inprod{\mat\oyi-\1}{x_{j_{i}}\eji} +  \phi_0(\x)\right\}.$   
  \STATE $\yk \leftarrow \arg\max \left\{\psitk(\y)\stackrel{\textrm{def}}{=} a_1 \inprod{\ma^\top \y - \1}{\x_1} +  \sum_{i =2}^k \ai \inprod{\ma^\top\y - \1}{n\xi - (n-1) \x_{i-1}} -\frac{A_k}{2}\norm{\y}_2^{2} \right\} $ 
  \STATE $\xtk= \frac{1}{A_k}\left[A_{k-1}\widetilde{\vx}_{k-1} + a_k \big(n \vx_k - (n-1)\vx_{k-1}\big)\right]$.
  \STATE $\overline{\y}_k \leftarrow \yk + \frac{a_{k}}{a_{k+1}} (\yk - \ykm)$
  \STATE $A_{k+1} \leftarrow A_{k} + a_{k+1}$
  \STATE  $a_{k+2}=\min\{\frac{n a_{k+1}}{n-1},\frac{\sqrt{A_{k+1}}}{2n}\}$
  \ENDFOR
  \STATE Return $\widetilde{\x}_K$ 
\end{algorithmic}
\end{algorithm*}

\subsection{Gap Estimate Construction}\label[sec]{sec:GapEstConst}
The gap estimate $\gk$ is constructed as the difference 
\begin{equation}\label[eq]{def-gap-est}
\begin{aligned}
\gk(\vu, \vv) &= \uk(\vv) - \lk(\vu),
\end{aligned}
\end{equation} 
where $\uk(\vv) \geq \la(\widetilde{\vx}_k, \vv)$  and $\lk(\vu) \leq \la(\vu, \widetilde{\vy}_k)$ are, respectively, upper and lower bounds we construct on the Lagrangian. By the definition of  $\gapa^{(\vu, \vv)}(\widetilde{\vx}_k, \widetilde{\vy}_k)$ in \cref{def:gapFn}, it then follows that $\gk(\vu, \vv)$ is a valid upper estimate of $\gapa^{(\vu, \vv)}(\widetilde{\vx}_k, \widetilde{\vy}_k)$.   

We first introduce a technical component  our constructions $\lk$ and $\uk$ crucially hinge on: we define two positive sequences of numbers $\{a_i\}_{i \geq 1}$ and $\{\aiuk\}_{1\leq i \leq k}$, with one of their properties being that both sum up to $A_k > 0$ for $k \geq 1$. Elaborating further, we  define $A_0 = 0$ and  $\{a_i\}_{i \geq 1}$  as $a_i = A_i - A_{i-1}.$ The  sequence $\{a_i^k\}$ changes with $k$ and for $k=1$ is defined by $a_1^1 = a_1$, while for $k \geq 2:$
\[\ai^{k} =  
             \begin{cases} 
             a_1 - (n-1)a_2, & \text{if\ $i=1$,}  \\  
             n\ai-(n-1)a_{i+1}, & \text{if\ $2\le i \le k -1$,}\\  
             na_{k}, &   \text{if\ $i=k$.}  
             \end{cases}  
\numberthis\label[eq]{aikrequirements}\]
Summing over $i \in [k]$ verifies that $A_k = \sum_{i=1}^k a_i^k.$ For the sequence $\{\aiuk\}_{1\leq i \leq k}$ to be non-negative, we further require that $a_1 - (n-1)a_2 \geq 0$ and $\forall i \geq 2$,
$
    n\ai-(n-1)a_{i+1} \geq 0.
$

The significance of these two sequences is in their role in defining the algorithm's primal-dual output pair 
\[\xtk=\frac{1}{\ak}\sum_{i\in [k]} \aiuk\xi \quad \text{ and } \quad \ytk = \frac{1}{\ak} \sum_{i\in [k]} \ai\yi.\numberthis\label[eq]{eq:def-xktilde}\] 
 The intricate interdependence of $\{a_i\}$ and $\{a_i^k\}$ enables expressing $\xtk$ in terms of only $a_i$. This expression further simplifies to a recursive one, offering a cheaper update for $\xtk$, which is in fact the one we use in \cref{alg:SI-NNLS-analysis}. 

With the sequences  $\{a_i\}_{i \geq 1}$ and $\{\aiuk\}_{1\leq i \leq k}$ in tow, we are now ready to show the construction of an upper bound $\uk(\vv)$ on $\la(\xtk, \vv)$ and a lower bound $\lk(\vu)$ on $\la(\vu, \ytk)$.  

\paragraph{Upper Bound.} 
To construct an upper bound, first observe that by  \cref{eq:lagrangian} and \cref{eq:def-xktilde},
\begin{align*}
    \la(\xtk,\v) &= \inprod{\ma \xtk}{\v} - \frac{1}{2}\|\v\|_2^2 - \1^\top \xtk\\  
 & =\frac{1}{\ak}\sum_{i\in[k]}\aiuk\left[\inprod{\ma\xi}{\v}-\frac{1}{2}\norm{\v}_2^{2} -\1^{\top}\xi\right].
\end{align*}
Consider the primal estimate sequence defined for $k = 0$ as ${\psi}_0 = 0$ and for $k \geq 1$ by
\begin{equation} 
\psitk(\v)\defeq \sum_{i \in [k]} \aiuk \left[ \inprod{\ma\xi}{\v} -\frac{1}{2}\norm{\v}_2^{2} -\1^{\top} \xi\right],\label[eq]{eq:def-psit}
\end{equation} 
which ensures that $\la(\xtk,\v) = \frac{1}{\ak}\psitk(\v)$. A key upshot of constructing $\psi_k(\v)$ as in \cref{eq:def-psit} is that the quadratic term  implies $\ak$-strong concavity of $\psitk$ for $k \geq 1$, which in turn ensures that the vector $\yk= \arg\max_{\vy \in \R^m} \psitk(\y)$ from  \cref{eq:meta-algo} is unique. This property, coupled with the first-order optimality condition  in \cref{eq:first-order-opt-cond}, gives that for any $\y\in \R^m$, 
\[\psitk(\y)\leq \psitk(\yk) - \frac{\ak}{2}\|\y-\yk\|_2^2.\numberthis\label[ineq]{eq:sc-psitk-yk-y}\]
We are now ready to define the following upper bound by: 
\[\uk(\v) \defeq\frac{1}{\ak}\psitk(\yk)-\frac{1}{2}\norm{\v-\yk}_2^{2}. \numberthis\label[eq]{def-uk}\] 
The preceding discussion immediately implies that $\uk$ is a valid upper bound for the Lagrangian, as formalized next. 
\begin{restatable}{lemma}{lemLxtkUky} 
For $\uk$ as defined in \cref{def-uk}, Lagrangian defined in \cref{eq:lagrangian} and $\xtk\in \R^n_+$ in \cref{eq:def-xktilde},  we have, for all $\y\in \R^m$, the upper bound $$\la(\xtk,\y)\leq\uk(\y).$$
\end{restatable}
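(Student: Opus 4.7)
The plan is to show that $\mathcal{L}(\widetilde{\x}_k, \y)$ is in fact equal to $\frac{1}{A_k}\psi_k(\y)$, and then invoke the strong concavity of $\psi_k$ noted in \cref{eq:sc-psitk-yk-y} to upper bound this quantity by $U_k(\y)$.

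First, I would use the fact that $\mathcal{L}(\cdot, \y)$ is affine in the primal variable, since the only quadratic term in the Lagrangian \eqref{eq:lagrangian} involves $\y$ alone. Writing $\widetilde{\x}_k = \frac{1}{A_k}\sum_{i\in[k]} a_i^k \x_i$ as in \cref{eq:def-xktilde} and using $A_k = \sum_{i\in[k]} a_i^k$, I would compute
\[
\mathcal{L}(\widetilde{\x}_k, \y)
= \inprod{\ma\widetilde{\x}_k}{\y} - \frac{1}{2}\|\y\|_2^2 - \1^\top \widetilde{\x}_k
= \frac{1}{A_k}\sum_{i\in[k]} a_i^k \Bigl[\inprod{\ma\x_i}{\y} - \frac{1}{2}\|\y\|_2^2 - \1^\top \x_i\Bigr],
\]
where the quadratic term $\frac{1}{2}\|\y\|_2^2$ distributes over the $a_i^k$'s because they sum to $A_k$. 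By the definition of $\psi_k$ in \cref{eq:def-psit}, the right-hand side is precisely $\frac{1}{A_k}\psi_k(\y)$.

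Next, I would apply the $A_k$-strong concavity of $\psi_k$, which follows directly from the quadratic term $-\frac{A_k}{2}\|\y\|_2^2$ in its definition. Since $\y_k$ maximizes $\psi_k$ over $\R^m$, the first-order optimality condition combined with strong concavity gives exactly \cref{eq:sc-psitk-yk-y}, namely $\psi_k(\y) \leq \psi_k(\y_k) - \frac{A_k}{2}\|\y - \y_k\|_2^2$. Dividing by $A_k > 0$ and chaining with the equality from the previous step yields
\[
\mathcal{L}(\widetilde{\x}_k, \y) = \frac{1}{A_k}\psi_k(\y) \leq \frac{1}{A_k}\psi_k(\y_k) - \frac{1}{2}\|\y - \y_k\|_2^2 = U_k(\y),
\]
which is the desired bound.

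There is no real obstacle here: the lemma is essentially a bookkeeping statement about the construction of $\psi_k$ and $U_k$. The only thing one must be careful about is that the identity $\mathcal{L}(\widetilde{\x}_k, \y) = \frac{1}{A_k}\psi_k(\y)$ requires $\widetilde{\x}_k$ to be a genuine convex combination with weights summing to $A_k$; this is guaranteed by the identity $A_k = \sum_{i\in[k]} a_i^k$ that was established when the sequences $\{a_i\}$ and $\{a_i^k\}$ were introduced in \cref{aikrequirements}.
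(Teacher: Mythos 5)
Your proposal is correct and follows essentially the same route as the paper's own proof: expand $\la(\widetilde{\x}_k,\y)$ as $\frac{1}{A_k}\psi_k(\y)$ using $A_k=\sum_{i\in[k]}a_i^k$, then apply the strong-concavity bound \cref{eq:sc-psitk-yk-y} and identify the result with $U_k(\y)$ via \cref{def-uk}. No gaps.
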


\paragraph{Lower Bound.} 
Analogous to the preceding section, we now obtain a  \textit{lower bound} on the Lagrangian, continuing our effort to bound the gap estimate. However, the construction is more technical than it is for the upper bound. We start with the same approach as for the upper bound. Since $\la(\u, \ytk)$ is convex in $\ytk,$ by Jensen's inequality:
\begin{align*}
    \la(\u,\ytk) &\geq \frac{1}{\ak}\sum_{i\in[k]}  \ai \Big( \inprod{\ma\u}{\yi}-\1^{\top}\u-\frac{1}{2} \norm{\yi}_2^{2} \Big).  
\end{align*}
Were we to define the dual estimate sequence $\phitk$ in the same way as we did for the primal estimate sequence $\psitk$, we would now simply define it as $\ak$ times the right-hand side in the last inequality. 
However, doing so would make  $\phitk$ depend on $\yk$, which is updated \emph{after} $\xk$, which in \cref{eq:meta-algo} is defined as the minimizer of the  $\phitk$. 

To avoid such a circular dependency, we  add and subtract a linear term $\sum_{i \in [k]} \inprod{\ma \oyi}{\u},$ where $\oyi$, defined later,  are extrapolation points that depend only on $\y_1, \dots \y_{i-1}$. We thus have
\ifdefined\isicml
\begin{align*}
    \la(\u,\ytk) &\geq \frac{1}{\ak}\sum_{i\in[k]}  \ai \left[ \inprod{\ma\u}{\oyi}-\1^{\top}\u-\frac{1}{2} \norm{\yi}_2^{2} \right]  \\
            &\quad +\frac{1}{\ak}\sum_{i\in[k]}  \ai \inprod{\ma\u}{\yi-\oyi}.
\end{align*}
\else 
\[ \la(\u,\ytk) \geq \frac{1}{\ak}\sum_{i\in[k]}  \ai \left[ \inprod{\ma\u}{\oyi}-\1^{\top}\u-\frac{1}{2} \norm{\yi}_2^{2} \right]  +\frac{1}{\ak}\sum_{i\in[k]}  \ai \inprod{\ma\u}{\yi-\oyi}.\] 
\fi 
If we now  defined $\phitk$ based on the first term in the above inequality, we run into another obstacle: the linearity of the resulting estimate sequence is insufficient for cancelling all the error terms in the analysis. Hence, as is common, we introduce strong convexity by adding and subtracting an appropriate strongly convex function. 

Our chosen strongly convex function, motivated by the box-constrained property of the optimum from \cref{enu:x-in-box} and crucial in bounding the initial gap estimate, coincides with $\phi_0$: for any $\x\in \R^n_+$, define the  function 
\begin{equation}\label[eq]{def-phi-zero}
\begin{aligned}
    \phio(\x) &= \frac{1}{2}\|\vx - \vx_0\|_{\mLambda}^2. 
\end{aligned}
\end{equation} 
This function is $1$-strongly convex with respect to the $\|\cdot\|_{\mLambda}$-norm and is used in defining $\phi_1$ as follows, for some $\u\in \R^n_+$: 
\begin{equation}\label[eq]{eq:phi-1-def}
    \phi_1(\u) = a_1 \inprod{\ma^\top \bar{\vy}_{0}- \1}{\u} + \phi_0(\u). 
\end{equation}
The definition of $\phi_1$ is driven by the purpose of cancelling initial error terms. Next, we choose $\phitk$ so that for any fixed $\vu\in \mathcal{X},$ we have 
\begin{equation}\label[eq]{eq:expPhiBound}
    \begin{aligned}
    \E[\phitk(\vu)] = \;&\E\Big[\sum_{i\in[k]}  \ai  \inprod{\ma^\top \oyi-\1}{\u} 
    + \phi_0(\vu)\Big], 
    \end{aligned}
\end{equation}

where the expectation is with respect to all the randomness in the algorithm. This construction is motivated by the need to reduce the per-iteration complexity, for which we employ a \emph{randomized} single coordinate update on $\xk$ for $k \geq 2.$ Note that to support such updates, we relax the lower bound to hold only \emph{in expectation}.

Concretely, let $j_i$ be the coordinate sampled uniformly at random from $[n]$ in the $i^{\mathrm{th}}$ iteration of SI-NNLS+, independent of the history. Fix $\yi$ for $i = 1, 2, \dotsc, k-1$ and for $k \geq 2$ and $\x \in \mathcal{X}$, define
\[
\phitk(\x)=\phi_1(\x)+\sum_{i= 2}^k n \ai\inprod{\mat\oyi-\1}{x_{j_{i}}\eji}. \numberthis\label[eq]{eq:def-phitk-nonrecursion}
\]
For $k \geq 2$, $\phitk(\vu)$ can also be defined recursively via 
\begin{equation}
\phitk(\x)=\phitmk(\x)+n\aik \inprod{\mat\oykc-\1}{x_{j_{k}}\ejk}.\label[eq]{eq:def-phitk}
\end{equation} 
The function $\phitk$ inherits the strong convexity of $\phio$. This property, together with \cref{eq:meta-algo} and first-order optimality from \cref{eq:first-order-opt-cond}, give \[ \phitk(\x) \geq \phitk(\xk) + \frac{1}{2}\|\x-\xk\|_{\mLambda}^2. \numberthis\label[eq]{phitk-strongconvexity}\] 
Along with strong convexity, our choice of $\phitk$ in~\cref{eq:def-phitk} leads to the following properties  essential to our analysis: (1) $\phitk$ is separable in its coordinates; (2) the primal variable $\xk$ is updated only at its $j_k^{\mathrm{th}}$ coordinate; (3)  \cref{eq:expPhiBound} is true. These are formally stated in~\cref{prop:properties-phitk}. 

With the dual estimate sequence $\phi_k$ defined in \cref{eq:def-phitk}, we now define the sequence $\lk$ by 
 \ifdefined\isicml
 \begin{equation}\label[eq]{eq:def-Lk}
\begin{aligned} 
\lk(\u)&\defeq\frac{1}{\ak}\Big[\phitk(\xk) + \frac{1}{2}\norm{\u-\xk}_{\mLambda}^{2} - \phio(\u)\Big]\\
&+\frac{1}{\ak}\sum_{i\in[k]}  \ai \Big[\inprod{\ma\u}{\yi-\oyi|} - \frac{1}{2}\|\yi\|^2_2\Big]. 
\end{aligned} 
\end{equation}
\else
\begin{align*} 
\lk(\x)&\defeq\frac{1}{\ak}\phitk(\xk)+\frac{1}{2\ak}\norm{\x-\xk}_{\mLambda}^{2}-\frac{1}{\ak}\phio(\x)+\frac{1}{\ak}\sum_{i\in[k]}  \ai \inprod{\ma\x}{\yi-\oyi}-\frac{1}{2\ak}\sum_{i\in [k]}\ai\|\yi\|^2_2 . \numberthis\label[eq]{eq:def-Lk}
\end{align*} 
\fi 
We conclude this section with the following lemma  justifying our choice of $\lk$ as a valid lower bound on $\mathbb{E}\la(\x, \ytk)$ that holds in expectation.
\begin{restatable}{lemma}{lemlklowerbound}\label[lem]{lem:lk-lowerbound} 
For $\lk$ defined in \cref{eq:def-Lk}, for the Lagrangian in \cref{eq:lagrangian} and $\ytk$ in \cref{eq:def-xktilde},  we have, for a fixed $\u\in \gX$, the lower bound $\mathbb{E}\la(\u,\ytk)\geq\mathbb{E}\lk(\u),$ where the expectation is with respect to all the random choices of coordinates in \cref{alg:SI-NNLS-analysis}.
\end{restatable}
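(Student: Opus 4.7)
The plan is to start exactly where the preceding exposition left off, namely the Jensen-based lower bound
\[
\la(\u,\ytk) \;\geq\; \frac{1}{\ak}\sum_{i\in[k]}\ai\Big(\inprod{\ma\u}{\oyi}-\1^\top\u-\tfrac12\|\yi\|_2^2\Big) + \frac{1}{\ak}\sum_{i\in[k]}\ai\inprod{\ma\u}{\yi-\oyi},
\]
and then massage the first sum (the linear-in-$\u$ part) into the expression for $\phitk$. Concretely, I would take expectations of both sides (with respect to the sampled coordinates $j_1,\dots,j_k$) and rewrite
\[
\mathbb{E}\Big[\sum_{i\in[k]}\ai\inprod{\ma^\top\oyi-\1}{\u}\Big] \;=\; \mathbb{E}[\phitk(\u)] - \phio(\u),
\]
which is exactly the identity \cref{eq:expPhiBound} (noting $\phio(\u)$ is deterministic). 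Substituting back gives
\[
\mathbb{E}[\la(\u,\ytk)] \;\geq\; \mathbb{E}\!\left[\frac{\phitk(\u)-\phio(\u)}{\ak} - \frac{1}{2\ak}\sum_{i\in[k]}\ai\|\yi\|_2^2 + \frac{1}{\ak}\sum_{i\in[k]}\ai\inprod{\ma\u}{\yi-\oyi}\right].
\]

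Next, I would invoke the strong convexity of $\phitk$ with respect to $\|\cdot\|_{\mLambda}$, together with the first-order optimality of $\xk = \argmin_{\x \in \mathcal{X}} \phitk(\x)$, which is \cref{phitk-strongconvexity}:
\[
\phitk(\u) \;\geq\; \phitk(\xk) + \tfrac{1}{2}\|\u-\xk\|_{\mLambda}^2 \qquad \text{for all } \u \in \gX.
\]
Taking expectations of this bound and plugging it into the displayed inequality above yields
\[
\mathbb{E}[\la(\u,\ytk)] \;\geq\; \mathbb{E}\!\left[\frac{\phitk(\xk)}{\ak} + \frac{\|\u-\xk\|_{\mLambda}^2}{2\ak} - \frac{\phio(\u)}{\ak} - \frac{1}{2\ak}\sum_{i\in[k]}\ai\|\yi\|_2^2 + \frac{1}{\ak}\sum_{i\in[k]}\ai\inprod{\ma\u}{\yi-\oyi}\right],
\]
which by inspection is precisely $\mathbb{E}[\lk(\u)]$ from \cref{eq:def-Lk}.

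I do not expect any real obstacle here beyond careful bookkeeping, because the two key technical inputs are already in hand: Jensen supplies the initial convexity-based lower bound on $\la(\u,\ytk)$, the identity \cref{eq:expPhiBound} converts the linear-in-$\u$ portion into an expected value of the estimate sequence $\phitk$, and strong convexity of $\phitk$ (which $\phitk$ inherits from $\phio$ since all other terms in \cref{eq:def-phitk} are linear in $\x$) completes the bound. The only thing to watch is that strong convexity is a \emph{pointwise} statement in the randomness of $j_1,\dots,j_k$, so it survives the expectation; and that $\u$ is fixed (in particular independent of the sampling), which is why pulling $\phio(\u)$ out of the expectation is legitimate. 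The appearance of the expectation in the conclusion is then unavoidable and stems exactly from the relaxation taken in \cref{eq:expPhiBound}, where single-coordinate randomized updates force us to match the linear sum to $\phitk$ only in expectation.
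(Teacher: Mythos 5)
Your proposal is correct and follows essentially the same route as the paper's own proof: Jensen's inequality on the convex combination defining $\ytk$, the identity \cref{eq:expPhiBound} to convert the linear-in-$\u$ sum into $\mathbb{E}[\phitk(\u)] - \phio(\u)$, and then the strong-convexity/optimality bound \cref{phitk-strongconvexity} to pass from $\phitk(\u)$ to $\phitk(\xk) + \frac{1}{2}\|\u - \xk\|_{\mLambda}^2$. Your remarks about the pointwise validity of the strong-convexity step and the fixedness of $\u$ are exactly the right points to watch and are consistent with the paper's argument.
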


\subsection{Bounding the  Gap Estimate}\label[sec]{sec:GapBound}
With the gap estimate $\gk$ constructed as in the preceding section by combining \cref{def-gap-est}, \cref{def-uk}, and \cref{eq:def-Lk}, we now achieve our goal of bounding $\ak \gk$ (to obtain a convergence rate of the order $1/\ak$) by bounding the change in $\ak \gk$ and the initial scaled gap $A_1 \g1$. 
\begin{restatable}{lemma}{lemGapEvolution}\label[lem]{lem:gapEvolution} 
Consider the iterates $\{\xk\}$ and $\{\yk\}$ evolving according to \cref{alg:SI-NNLS-analysis}. Let $\oykc = \ykm + \frac{a_{k-1}}{a_k}(\ykm - \y_{k-2})$. Let $n \geq 2$ and assume that $a_1 = \frac{1}{\sqrt{2}n^{1.5}}$  and $a_1 \geq (n-1)a_2$, while for $k \geq 3$,
\[a_k  \leq \min\left( \frac{na_{k-1}}{n-1}, \frac{\sqrt{A_{k-1}}}{2n}\right).\numberthis\label[eq]{akrequirements}\]  Then, for fixed $\vu\in \mathcal{X}$, any $\vv \in \R^m,$ and all $k \geq 2$, the gap estimate $\gk$ defined in \cref{def-gap-est} satisfies 
\ifdefined\isicml
\begin{align*}
&\mathbb{E}\left[A_{k}\gk(\u,\v)-A_{k-1}\gkm(\u,\v)\right]\\
\leq  &-\frac{\ak}{2}\mathbb{E}\left[\norm{\v-\yk}_2^{2}\right]+\frac{\akm}{2}\E\left[\norm{\v-\ykm}_2^{2}\right]\\
 & -\frac{1}{2}\mathbb{E}\left[\norm{\u-\xk}_{\mLambda}^{2}\right] + \frac{1}{2}\mathbb{E}\left[\norm{\u-\xkm}_{\mLambda}^{2}\right]\\
 &- \aik \mathbb{E}\left[\inprod{\ma(\u-\xk)}{\yk-\ykm}\right]\\
 &+a_{k-1} \mathbb{E}\left[\inprod{\ma(\u-\xkm)}{\ykm-\ykt}\right]\\
 &-\frac{\akm}{4}  \mathbb{E}\left[\|\yk-\ykm\|_2^2\right] \\
 &+ n^2a_{k-1}^2  \mathbb{E}\left[\|\ykm-\ykt\|_2^2\right].
\end{align*} 
\else 
\begin{align*}
\mathbb{E}(A_{k}G_{k}(\x,\y)-A_{k-1}G_{k-1}(\x,\y)) & \leq -\mathbb{E}\left(\frac{\ak}{2}\norm{\y-\yk}_2^{2}-\frac{\akm}{2}\norm{\y-\ykm}_2^{2}\right)\\
 & -\frac{1}{2}\mathbb{E}\norm{\x-\xk}_{\mLambda}^{2}+\frac{1}{2}\mathbb{E}\norm{\x-\xkm}_{\mLambda}^{2}\\
 &- \aik \mathbb{E}\inprod{\ma(\x-\xk)}{\yk-\ykm}+a_{k-1} \mathbb{E}\inprod{\ma(\x-\xkm)}{\ykm-\ykt}\\
 &-\frac{1}{4} \akm \mathbb{E}\|\yk-\ykm\|_2^2+\frac{1}{4} A_{k-2} \mathbb{E}\|\ykm-\ykt\|_2^2.
\end{align*}
\fi 
\end{restatable}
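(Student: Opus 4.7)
The plan is to write $A_k G_k - A_{k-1} G_{k-1} = \Delta U_k - \Delta L_k$ with $\Delta U_k \defeq A_k U_k(\v) - A_{k-1} U_{k-1}(\v)$ and $\Delta L_k \defeq A_k L_k(\u) - A_{k-1} L_{k-1}(\u)$, and bound each piece separately. Recalling $A_k U_k(\v) = \psi_k(\y_k) - \frac{A_k}{2}\|\v - \y_k\|_2^2$ and $A_k L_k(\u) = \phi_k(\x_k) + \frac{1}{2}\|\u - \x_k\|_{\mLambda}^2 - \phi_0(\u) + \sum_{i \in [k]} a_i\bigl[\inprod{\ma\u}{\y_i - \bar\y_i} - \tfrac{1}{2}\|\y_i\|_2^2\bigr]$, the structural idea is that the recursive definitions of $\psi_k$ and $\phi_k$ (together with their strong concavity/convexity) let us express $\Delta U_k$ and $\Delta L_k$ in terms of a single new summand, a modulus term controlled by $A_k$ or $A_{k-1}$, and quadratic telescoping pieces that match the right-hand side of the lemma.

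\textbf{Bounding $\Delta U_k$.} I would split $\psi_k(\y_k) - \psi_{k-1}(\y_{k-1}) = [\psi_k(\y_k) - \psi_{k-1}(\y_k)] + [\psi_{k-1}(\y_k) - \psi_{k-1}(\y_{k-1})]$. The first bracket is the explicit increment $a_k\inprod{\ma^\top \y_k - \1}{n\x_k - (n-1)\x_{k-1}} - \tfrac{a_k}{2}\|\y_k\|_2^2$; the second bracket is bounded by $-\tfrac{A_{k-1}}{2}\|\y_k - \y_{k-1}\|_2^2$ via strong concavity of $\psi_{k-1}$ combined with first-order optimality of $\y_{k-1}$. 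Combining with $-\tfrac{A_k}{2}\|\v - \y_k\|_2^2 + \tfrac{A_{k-1}}{2}\|\v - \y_{k-1}\|_2^2$ and applying the identity $\tfrac{A_k}{2}\|\v-\y_k\|^2 - \tfrac{A_{k-1}}{2}\|\v-\y_{k-1}\|^2 = \tfrac{a_k}{2}\|\v-\y_k\|^2 + \tfrac{A_{k-1}}{2}\|\y_k - \y_{k-1}\|^2 - A_{k-1}\inprod{\v - \y_k}{\y_k - \y_{k-1}}$ type algebra produces the clean quadratic telescope and a leftover inner product that will be cancelled against $\Delta L_k$.

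\textbf{Bounding $\Delta L_k$ in expectation.} Analogously I decompose $\phi_k(\x_k) - \phi_{k-1}(\x_{k-1})$ into $[\phi_k(\x_k) - \phi_k(\x_{k-1})] + [\phi_k(\x_{k-1}) - \phi_{k-1}(\x_{k-1})]$. The first bracket is $\leq -\tfrac{1}{2}\|\x_k - \x_{k-1}\|_{\mLambda}^2$ by first-order optimality of $\x_k$ and $\mLambda$-strong convexity (\cref{phitk-strongconvexity}); the second bracket, once expectation over the uniform coordinate $j_k$ is taken and the per-coordinate factor $n$ in \cref{eq:def-phitk} cancels, equals $a_k\inprod{\ma^\top \bar\y_{k-1} - \1}{\x_{k-1}}$ by the expectation property of~\cref{eq:expPhiBound}. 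I would then write $\inprod{\ma\u}{\y_k - \bar\y_{k-1}}$ in the sum over $i$ (new term) and use $\bar\y_{k-1} = \y_{k-1} + \tfrac{a_{k-1}}{a_k}(\y_{k-1} - \y_{k-2})$ to turn it into $\inprod{\ma\u}{\y_k - \y_{k-1}} - \tfrac{a_{k-1}}{a_k}\inprod{\ma\u}{\y_{k-1} - \y_{k-2}}$, which, when multiplied by $a_k$, produces a term depending on $\y_k - \y_{k-1}$ at step $k$ and another on $\y_{k-1} - \y_{k-2}$ at step $k-1$; subtracting the $k-1$ analogue yields a difference that aligns with the inner product telescope appearing on the right-hand side of the lemma, after replacing $\u$ by $\u - \x_k$ using a comparable shift on the primal side.

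\textbf{Combination and main obstacle.} Putting $\Delta U_k - \Delta L_k$ together produces the claimed primal and dual quadratic telescopes plus the inner-product telescopes $-a_k\inprod{\ma(\u-\x_k)}{\y_k-\y_{k-1}} + a_{k-1}\inprod{\ma(\u-\x_{k-1})}{\y_{k-1}-\y_{k-2}}$, along with remainder quadratics in $\|\y_k - \y_{k-1}\|_2^2$ coming from the concavity of $\psi_{k-1}$ and the Young's inequality used to absorb cross terms. The main obstacle is precisely this absorption: bounding the inner-product residual $\inprod{\ma(\x_k - \x_{k-1})}{\y_k - \y_{k-1}}$ (arising because the primal update changes between steps $k-1$ and $k$) by $\tfrac{1}{2}\|\x_k - \x_{k-1}\|_{\mLambda}^2 + \tfrac{1}{2}\|\y_k - \y_{k-1}\|_2^2 \cdot \text{(something)}$ and checking that the coefficient on $\|\y_k - \y_{k-1}\|_2^2$ is at most $\tfrac{A_{k-1}}{4}$. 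This is exactly where the step-size condition $a_k \leq \tfrac{\sqrt{A_{k-1}}}{2n}$ enters: it guarantees $n^2 a_k^2 \leq \tfrac{A_{k-1}}{4}$, leaving the residual $n^2 a_{k-1}^2 \|\y_{k-1} - \y_{k-2}\|_2^2$ on the right-hand side to be absorbed telescopically at the previous step. The side condition $a_k \leq \tfrac{n a_{k-1}}{n-1}$ is needed to keep the coefficients $a_i^k$ nonnegative (cf.~\cref{aikrequirements}), and the base case $a_1 \geq (n-1)a_2$ together with $a_1 = \tfrac{1}{\sqrt{2}n^{1.5}}$ initializes the recursion so that $A_1 G_1$ can be bounded separately.
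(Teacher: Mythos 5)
Your overall architecture is the paper's: decompose $A_k\gk - A_{k-1}\gkm$ into the increments of the two estimate sequences plus telescoping quadratics, handle the $\psi$-increment by evaluating the new linear term at $\yk$ and invoking strong concavity of $\psitkm$ around its maximizer $\ykm$, take conditional expectations over $j_k$, substitute $\oykc = \ykm + \frac{a_{k-1}}{a_k}(\ykm - \y_{k-2})$, and absorb the cross terms by Young's inequality using the fact that $\xk - \xkm$ is supported on the single coordinate $j_k$; the step-size conditions enter exactly where you say they do. But your treatment of the dual estimate sequence has a genuine gap: you need a \emph{lower} bound on $\phitk(\xk)-\phitkm(\xkm)$ (it enters the gap with a minus sign), yet you insert $\phitk(\xkm)$ as the intermediate point. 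Strong convexity of $\phitk$ around its own minimizer $\xk$ only yields $\phitk(\xk)-\phitk(\xkm) \leq -\frac{1}{2}\norm{\xk-\xkm}_{\mLambda}^2$, an \emph{upper} bound on your first bracket; there is no useful lower bound on it, so your inequality points the wrong way and the helpful $-\frac{1}{2}\norm{\xk-\xkm}_{\mLambda}^2$ term never appears with the correct sign. The correct intermediate point is $\phitkm(\xk)$: strong convexity of $\phitkm$ around \emph{its} minimizer $\xkm$ gives $\phitkm(\xk)-\phitkm(\xkm)\geq \frac{1}{2}\norm{\xk-\xkm}_{\mLambda}^2$ in the needed direction, and the remaining bracket $\phitk(\xk)-\phitkm(\xk) = na_k\inprod{\mat\oykc-\1}{[\xk]_{j_k}\ejk}$ is the increment evaluated at the \emph{new} point. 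Note that for the $\psi$-side you made precisely the opposite (and correct) choice; the two decompositions must mirror each other.

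The choice of intermediate point also matters for the expectation step. Writing $[\xk]_{j_k}\ejk = (\xk-\xkm) + [\xkm]_{j_k}\ejk$ and using $\E\big[[\xkm]_{j_k}\ejk\mid\mathcal{F}_{k-1}\big] = \frac{1}{n}\xkm$ gives $\E\big[na_k\inprod{\mat\oykc-\1}{[\xk]_{j_k}\ejk}\big] = a_k\E\big[\inprod{\mat\oykc-\1}{n\xk-(n-1)\xkm}\big]$, which is exactly what pairs with the $\psi$-increment $a_k\inprod{\mat\yk-\1}{n\xk-(n-1)\xkm}$ so that the $\1$-terms cancel and the combination collapses to $a_k\inprod{\mat(\yk-\oykc)}{n\xk-(n-1)\xkm-\u}$. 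Your version, which evaluates the increment at $\xkm$, produces only $a_k\inprod{\mat\oykc-\1}{\xkm}$ in expectation; this does not match the $n\xk-(n-1)\xkm$ structure coming from $\psitk$ and leaves uncancelled linear terms in $\1$. A minor further point: the expansion you propose for $\frac{A_k}{2}\norm{\v-\yk}_2^2 - \frac{A_{k-1}}{2}\norm{\v-\ykm}_2^2$ is unnecessary (and as written has a sign error in the $\norm{\yk-\ykm}_2^2$ term); these two quadratics appear verbatim on the right-hand side of the lemma and are only telescoped later, in the proof of \cref{thm:FinalAKGKBound}, so no cancellation against $\Delta L_k$ is involved.
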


\begin{restatable}{lemma}{lemAoneGoneBound}\label[lem]{lem:AoneGoneBound}
Consider a fixed $\vu \in \gX$, any $\vv \in \R^m$, and $\gk(\vu, \vv)$ from \cref{def-gap-est}. Assume $\bar{\y}_0 = \y_0.$ Then the iterates $\x_1$ and $\y_1$ of \cref{alg:SI-NNLS-analysis} satisfy the property 
\ifdefined\isicml
\begin{align*}
A_1 \mathrm{G}_1 (\u, \v) 
= &\; a_1 \inprod{\ma^\top(\y_1 - \y_0)}{\x_1 - \u} \\
&+ \phio(\u) - \phio(\x_1) \\
    & - \frac{1}{2}\|\u - \x_1\|_{\mLambda}^2
     - \frac{A_1}{2}\|\v - \y_1\|_2^2.
\end{align*}
\else 
\[A_1 \mathrm{G}_1 (\u, \v) 
= a_1 \inprod{\ma^\top(\y_1 - \y_0)}{\x_1 - \u} + \phio(\u) - \phio(\x_1)  - \frac{1}{2}\|\u - \x_1\|_{\mLambda}^2 
     - \frac{A_1}{2}\|\v - \y_1\|_2^2. \] 
\fi 
\end{restatable}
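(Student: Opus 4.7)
The plan is to prove this lemma by direct computation, substituting the definitions of $U_1$, $L_1$, $\psi_1$, $\phi_1$ evaluated at $k=1$ and then simplifying. Since $A_1 = a_1$, $A_0 = 0$, and by the construction in \cref{aikrequirements} we have $a_1^1 = a_1$, the sums collapse to a single term in each case, which makes the computation tractable.

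First I would expand $A_1 G_1(\u, \v) = A_1 U_1(\v) - A_1 L_1(\u)$ using \cref{def-uk} and \cref{eq:def-Lk}. For $U_1$, I invoke the definition of $\psi_k$ in \cref{eq:def-psit} at $k=1$ to get
\[
A_1 U_1(\v) = a_1\bigl(\inprod{\ma \x_1}{\y_1} - \tfrac{1}{2}\|\y_1\|_2^2 - \1^\top \x_1\bigr) - \tfrac{A_1}{2}\|\v - \y_1\|_2^2.
\]
For $L_1$, the single-term sum $\sum_{i \in [1]} a_i \inprod{\ma \u}{\y_i - \bar{\y}_i}$ becomes $a_1\inprod{\ma \u}{\y_1 - \bar{\y}_0}$, which by the hypothesis $\bar{\y}_0 = \y_0$ equals $a_1\inprod{\ma \u}{\y_1 - \y_0}$. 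Using \cref{eq:phi-1-def} for $\phi_1(\x_1) = a_1\inprod{\ma^\top \y_0 - \1}{\x_1} + \phi_0(\x_1)$, I obtain
\[
A_1 L_1(\u) = a_1\inprod{\ma^\top \y_0 - \1}{\x_1} + \phi_0(\x_1) + \tfrac{1}{2}\|\u - \x_1\|_{\mLambda}^2 - \phi_0(\u) + a_1\inprod{\ma^\top(\y_1 - \y_0)}{\u} - \tfrac{a_1}{2}\|\y_1\|_2^2.
\]

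Next I would subtract. The $\pm \tfrac{a_1}{2}\|\y_1\|_2^2$ terms cancel. The remaining inner-product terms involving $\x_1$ combine as
\[
a_1\inprod{\ma^\top\y_1 - \1}{\x_1} - a_1\inprod{\ma^\top\y_0 - \1}{\x_1} = a_1\inprod{\ma^\top(\y_1 - \y_0)}{\x_1},
\]
which together with $-a_1\inprod{\ma^\top(\y_1 - \y_0)}{\u}$ yields $a_1\inprod{\ma^\top(\y_1 - \y_0)}{\x_1 - \u}$. Collecting the remaining $\phi_0$ and squared-norm terms gives exactly the right-hand side of the claimed identity.

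There is no real obstacle here beyond careful bookkeeping; the main thing is to remember that at $k=1$ the estimate sequences contain only one indexed term and that the extrapolation point coincides with $\y_0$ by hypothesis, so no cross-iteration cancellation is needed. The lemma serves as the base case for the telescoping argument driven by \cref{lem:gapEvolution}.
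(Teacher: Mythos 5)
Your proposal is correct and follows essentially the same route as the paper's own proof: evaluate $U_1$ and $L_1$ at $k=1$ using the single-term estimate sequences with $a_1^1 = a_1$, use $\bar{\y}_0 = \y_0$ to rewrite $\phi_1(\x_1)$ and the extrapolation term, and cancel the $\frac{a_1}{2}\|\y_1\|_2^2$ terms to collect the inner product $a_1\inprod{\ma^\top(\y_1-\y_0)}{\x_1-\u}$. The bookkeeping checks out and nothing is missing.
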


We may combine the two preceding lemmas to bound the final gap $G_K$ and deduce our final result on the primal error.
\begin{restatable}{theorem}{thmFinalAKGKBound}[Main Result]
\label[thm]{thm:FinalAKGKBound}
Assume that $n\geq 4.$ Given a matrix $\ma\in \R^{m\times n}_+$, $\eps>0$, an arbitrary $\x_0 \in \gX$ and $\bar{\vy}_0  = \y_0 = \ma \x_0,$ 
let $\xk$ and $A_k$ evolve according to SI-NNLS+ (\cref{alg:SI-NNLS-analysis}) for $k \geq 1.$ For $f$ defined in \cref{eq:sim_obj}, define $\vxs \in \argmin_{\vx \geq \0}f(\vx)$. Then, for all $\ktotal \geq 2$, we have 
\begin{equation}\notag
\begin{aligned}
    \E[\inprod{\nabla f(\xtktotal)}{\xtktotal - \vxs}] \leq \frac{2\phi_0(\vxs)}{A_{\ktotal}} = \frac{\|\x_0 - \vxs\|_{\mLambda}^2}{A_{\ktotal}}.
\end{aligned}
\end{equation}

The expected primal error bound is
\begin{equation}\notag
    \E[f(\xtktotal) - f(\vxs)] \leq \frac{\phi_0(\vxs)}{A_{\ktotal}} = \frac{\|\x_0 - \vxs\|_{\mLambda}^2}{2A_{\ktotal}}. 
\end{equation}
When $K \ge \frac{5}{2} n \log n$, we have $A_{\ktotal} \geq  \frac{(K-\frac{5}{2} n \log n)^2}{36n^2}$. If $\phio(\vxs)\leq |f(\vxs)|$, then for $\ktotal\ge \frac{5}{2}n \log n + \frac{6n}{\sqrt{\eps}}$, we have   
\begin{equation}\notag
    \E[f(\xtktotal) - f(\vxs)] \leq \epsilon|f(\vxs)|. 
\end{equation}

The total cost is $O\big(\textrm{nnz}(\ma)\big(\log n + \frac{1}{\sqrt{\eps}}\big)\big)$. \end{restatable}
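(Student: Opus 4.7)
The plan is to combine the per-iteration bound from \cref{lem:gapEvolution} with the initialization bound from \cref{lem:AoneGoneBound}, telescope to show $\E[A_K G_K(\vu,\vv)] \leq \phi_0(\vu)$ for arbitrary $\vu \in \mathcal{X}$ and $\vv \in \R^m$, instantiate at $\vu = \vxs$ and $\vv = \ma\xtktotal$, and then translate the resulting primal-dual bound into the two stated inequalities. The final ingredient will be a lower bound on $A_K$ obtained from a two-regime analysis of the step-size recursion in \cref{alg:SI-NNLS-analysis}.

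For the telescoping, I would sum \cref{lem:gapEvolution} over $k = 2, \dots, K$ and add $A_1 G_1$ from \cref{lem:AoneGoneBound}. The quadratic potentials $\tfrac{1}{2}\|\vu - \xk\|_{\mLambda}^2$ and $\tfrac{\akm}{2}\|\vv - \yk\|_2^2$ telescope into $\phi_0(\vu) - \tfrac{1}{2}\|\vu - \x_K\|_{\mLambda}^2 - \tfrac{A_K}{2}\|\vv - \y_K\|_2^2$; what remains are cross terms $a_k\inprod{\ma(\vu - \xk)}{\yk - \ykm}$ (from adjacent steps, whose bulk also telescopes by an index shift) and dual error quadratics $n^2 a_{k-1}^2 \|\ykm - \y_{k-2}\|_2^2$ partially compensated by $-\tfrac{\akm}{4}\|\yk - \ykm\|_2^2$. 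The technical heart of the proof, and the step where the NNLS+ structure is essential, is absorbing these residual terms. I plan to apply Cauchy--Schwarz together with the width-independence identity $\|\ma\vz\|_2^2 \leq n\|\vz\|_{\mLambda}^2$ (immediate from Cauchy--Schwarz when $\ma$ is entry-wise non-negative) to obtain, for any $\alpha > 0$,
\[
a_k \inprod{\ma(\vu - \xk)}{\yk - \ykm} \leq \tfrac{n a_k^2}{\alpha}\|\vu - \xk\|_{\mLambda}^2 + \tfrac{\alpha}{4}\|\yk - \ykm\|_2^2,
\]
and then leverage the algorithmic constraint $a_k \leq \sqrt{A_{k-1}}/(2n)$ from \cref{akrequirements}, which forces $n^2 a_k^2 \leq A_{k-1}/4$, to balance both pieces against the telescoped $\|\vu-\xk\|_{\mLambda}^2$ and $\|\yk-\ykm\|_2^2$ terms (the choice $\alpha = A_{k-1}$ should work). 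With this absorption closing cleanly, I expect $\E[A_K G_K(\vu,\vv)] \leq \phi_0(\vu)$.

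Once this gap estimate is in hand, I instantiate $\vu = \vxs$ and $\vv = \ma\xtktotal$. Using \cref{eq:gap-to-opt-gap} (or, more explicitly, the identity $G_K(\vxs, \ma\xtktotal) = f(\xtktotal) - f(\vxs) + \tfrac{1}{2}\|\ma\vxs - \widetilde{\y}_K\|_2^2$ that follows from \cref{sec:pdgapperspective}) immediately yields $\E[f(\xtktotal) - f(\vxs)] \leq \phi_0(\vxs)/A_K$. For the inner-product bound, I would exploit that $f$ is a convex quadratic: expanding $\nabla f(\xtktotal) = \nabla f(\vxs) + \ma^{\top}\ma(\xtktotal - \vxs)$ and combining with Taylor's identity $f(\xtktotal) - f(\vxs) = \inprod{\nabla f(\vxs)}{\xtktotal - \vxs} + \tfrac{1}{2}\|\ma(\xtktotal - \vxs)\|_2^2$ yields
\[
\inprod{\nabla f(\xtktotal)}{\xtktotal - \vxs} = 2(f(\xtktotal) - f(\vxs)) - \inprod{\nabla f(\vxs)}{\xtktotal}.
\]
By \cref{enu:grad-nonnegative}, the KKT stationarity $\inprod{\nabla f(\vxs)}{\vxs} = 0$, and $\xtktotal \geq \0$, the subtracted term is non-negative, so $\inprod{\nabla f(\xtktotal)}{\xtktotal - \vxs} \leq 2(f(\xtktotal) - f(\vxs)) \leq 2\phi_0(\vxs)/A_K$ in expectation, recovering the first inequality.

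The remaining piece is the lower bound on $A_K$. The recursion $a_{k+2} = \min\{na_{k+1}/(n-1),\ \sqrt{A_{k+1}}/(2n)\}$ has two regimes. From $a_1 = 1/(\sqrt{2}n^{3/2})$, the geometric branch dominates first so that $a_k$ multiplies by $n/(n-1)$ each step; a direct calculation shows the branches cross after roughly $\tfrac{5}{2}n\log n$ iterations. Beyond that, the quadratic branch binds, and the recurrence $A_k - A_{k-1} \approx \sqrt{A_{k-1}}/(2n)$ forces $A_k$ to grow like $k^2/(16n^2)$, which I would formalize by induction to obtain $A_K \geq (K - \tfrac{5}{2}n\log n)^2/(36n^2)$. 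Setting $K \geq \tfrac{5}{2}n\log n + 6n/\sqrt{\eps}$ then guarantees $A_K \geq n^2/\eps$, which under $\phi_0(\vxs) \leq |f(\vxs)|$ gives $\E[f(\xtktotal) - f(\vxs)] \leq \eps|f(\vxs)|$. Finally, for the cost bound, each iteration samples $j_k$ uniformly from $[n]$ and touches only the $j_k^{\mathrm{th}}$ coordinate, costing $O(\mathrm{nnz}(\ma_{:j_k}))$ and averaging to $O(\mathrm{nnz}(\ma)/n)$ per iteration; multiplying by $K = O(n\log n + n/\sqrt{\eps})$ delivers the stated $O(\mathrm{nnz}(\ma)(\log n + 1/\sqrt{\eps}))$ total.
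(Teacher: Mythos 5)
Your high-level plan matches the paper's proof: telescope \cref{lem:gapEvolution} against \cref{lem:AoneGoneBound}, absorb the surviving $k=K$ cross term by Young's inequality using $a_K \leq \sqrt{A_{K-1}}/(2n)$, instantiate $(\vu,\vv) = (\vxs, \ma\xtktotal)$, and run a two-regime analysis of the step sizes (the paper's \cref{prop:RateOfGrowthOfAk}). However, there is one genuine gap in the telescoping. The $k=2$ application of \cref{lem:gapEvolution} contributes a boundary term $n^2 a_1^2\,\E\|\y_1-\y_0\|_2^2$ that has no earlier $-\tfrac{A_0}{4}\|\y_1-\y_0\|_2^2$ to cancel it (since $A_0=0$), so your telescoped sum does not close to $\phio(\vu)$ as stated. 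The paper absorbs this term using the $-\phio(\x_1)$ supplied by \cref{lem:AoneGoneBound}: writing $\x_1 = \x_0 - a_1\mLambda^{-1}(\ma^\top\y_0-\1)$ and $\y_1-\y_0 = \ma(\x_1-\x_0)$, and using $\|\mLambda^{-1/2}\ma^\top\ma\mLambda^{-1/2}\|_2 \leq n$ (a trace bound, since this PSD matrix has unit diagonal), one gets $n^2a_1^2\|\y_1-\y_0\|_2^2 - \phio(\x_1) \leq a_1^2\|\mLambda^{-1/2}(\ma^\top\y_0-\1)\|_2^2\big(n^3a_1^2 - \tfrac12\big) \leq 0$. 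This is precisely the step that forces $a_1 = \tfrac{1}{\sqrt 2 n^{1.5}}$ and requires the initialization $\y_0 = \ma\x_0$; without it the claimed bound $\E[A_K G_K(\vu,\vv)]\leq \phio(\vu)$ is not established.

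The rest is sound, with one genuinely different (and correct) sub-argument: for the gradient--inner-product bound, the paper instantiates the gap a second time at $(\vu,\vv)=(\vxs,\ma\vxs)$ to extract $\E[\tfrac12\|\ma(\xtktotal-\vxs)\|_2^2] \leq \phio(\vxs)/A_K$ and adds this to the primal bound via the identity $\inprod{\nabla f(\x)}{\x-\vxs} = f(\x)-f(\vxs)+\tfrac12\|\ma(\x-\vxs)\|_2^2$, whereas you derive $\inprod{\nabla f(\xtktotal)}{\xtktotal-\vxs} \leq 2(f(\xtktotal)-f(\vxs))$ directly from the quadratic Taylor identity together with $\nabla f(\vxs)\geq \0$, $\inprod{\nabla f(\vxs)}{\vxs}=0$, and $\xtktotal\geq\0$; both give the constant $2$, and yours avoids the second instantiation. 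Two minor corrections: $K \geq \tfrac52 n\log n + 6n/\sqrt\eps$ gives $A_K \geq 1/\eps$, not $n^2/\eps$ (the former is exactly what is needed since the primal bound is $\phio(\vxs)/A_K \leq |f(\vxs)|/A_K$); and the $O(\mathrm{nnz}(\ma)/n)$ expected per-iteration cost holds only for the lazy implementation of Appendix~\ref{sec:AlgImplementation}, since \cref{alg:SI-NNLS-analysis} as written updates the full $m$- and $n$-dimensional vectors $\y_k$, $\overline{\y}_k$, and $\xtk$ each iteration.
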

The assumption $\phio(\vxs)\leq |f(\vxs)|$ in the theorem is satisfied by  $\x_0 = \0$, which can be seen by \cref{prop:properties-object} and \cref{def-phi-zero}.  

\begin{remark}\label{remark:batch-size}
SI-NNLS+ (\cref{alg:SI-NNLS-analysis}) and \cref{thm:FinalAKGKBound} also generalize to a mini-batch version. Increasing the batch size grows our bounds and number of data passes by a factor of at most square-root of the batch size $m$, by relating the spectral norm of the $m$ columns of $\ma$ corresponding to a batch to the Euclidean norms of individual columns of $\ma$ from the same batch.  However, due to efficient available implementations of vector operations, mini-batch variants of our algorithm with small batch sizes can have lower total runtimes on some datasets (see Section \ref{sec:num-exps}). 
\end{remark}

\section{Adaptive Restart}\label[sec]{sec:Restart}
We now describe how SI-NNLS+ can be combined with adaptive restart to obtain linear convergence rate. To apply the restart strategy, we need suitable upper and lower bounds on the measure of convergence rate. Our measure of optimality is
\begin{equation}\label[eq]{eq:natural map}
    \mR(\x) = \x - \Pi_{\R^n_+}(\x - \mLambda^{-1}\nabla f(\x)) = \x - (\x - \mLambda^{-1}\nabla f(\x))_+,
\end{equation}
where $\Pi_{\R^n_+}$ is the projection operator onto $\R^n_+$ and $\mLambda$ is as defined in \cref{sec:DefsAndFacts}. For $\mLambda=\mathbf{I}$, this is the natural map as defined in, e.g.,~\cite{facchinei2007finite}.  

To establish local error bounds, we start with the observation that \eqref{eq:main-problem} is equivalent to a linear complementarity problem.

\begin{restatable}{proposition}{propLCPconnection}\label[prop]{prop:LCP-connection}
Problem~\eqref{eq:main-problem} is equivalent to the following linear complementarity problem, denoted by $\mathrm{LCP}(\matM, \vq)$. 
\begin{equation}\label[eq]{eq:LCP}
    \matM \x + \vq \geq \0, \; \x \geq \0,\; \inprod{\x}{\matM \x + \vq } = 0,
\end{equation}
where $\mLambda^{-1}\matM = \ma^\top \ma$ and $\vq = - \mLambda^{-1}\1.$ 
\end{restatable}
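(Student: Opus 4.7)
The plan is to derive the KKT conditions for~\eqref{eq:main-problem} (in the rescaled form of~\cref{eq:sim_obj}) and verify that after a diagonal rescaling by $\mLambda^{-1}$ they become exactly the conditions defining $\mathrm{LCP}(\matM,\vq)$. Since the change of variable bringing~\eqref{eq:main-problem} to~\cref{eq:sim_obj} preserves the feasible cone $\R^n_+$, it suffices to work with~\cref{eq:sim_obj}.

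First, I would note that $f(\vx) = \tfrac{1}{2}\|\ma\vx\|_2^2 - \1^\top\vx$ is convex and continuously differentiable and $\R^n_+$ is a polyhedral cone, so the KKT conditions are necessary and sufficient for optimality. Writing them out (or specializing~\cref{eq:first-order-opt-cond} to $\cx = \R^n_+$), $\vxs$ solves~\cref{eq:sim_obj} iff $\vxs \geq \0$, $\nabla f(\vxs) = \ma^\top\ma\vxs - \1 \geq \0$ (dual feasibility after eliminating the non-negativity multipliers), and $\inprod{\vxs}{\nabla f(\vxs)} = 0$ (complementary slackness between the primal variable and those multipliers).

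Next, since $\mLambda$ is a strictly positive diagonal matrix under the non-degeneracy assumption on $\ma$ from~\cref{sec:contributions}, left multiplication by $\mLambda^{-1}$ is coordinate-wise sign preserving. Hence $\nabla f(\vxs) \geq \0$ is equivalent to $\mLambda^{-1}(\ma^\top\ma\vxs - \1) \geq \0$, which is precisely $\matM\vxs + \vq \geq \0$ for the $\matM,\vq$ in the statement. Moreover, whenever $\vxs \geq \0$ and the rescaled gradient is coordinate-wise non-negative, the inner product $\inprod{\vxs}{\mLambda^{-1}\nabla f(\vxs)}$ vanishes iff each coordinate product does iff $\inprod{\vxs}{\nabla f(\vxs)} = 0$, which identifies the LCP complementarity with its KKT counterpart.

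The converse is a direct unwinding: given any $\vxs$ satisfying $\mathrm{LCP}(\matM,\vq)$, multiplying $\matM\vxs + \vq \geq \0$ and the complementarity equation on the left by $\mLambda$ (again positivity-preserving) recovers the three KKT conditions, and convexity of $f$ upgrades these to $\vxs \in \argmin_{\vx \geq \0} f(\vx)$. The only real obstacle is the bookkeeping to ensure the diagonal rescaling by $\mLambda^{-1}$ respects both the inequality and the complementarity structure; once that is made explicit, the argument reduces to the standard KKT-to-LCP translation for non-negatively constrained convex quadratic programs.
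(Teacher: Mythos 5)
Your proposal is correct and follows essentially the same route as the paper: both identify the LCP conditions, after noting that the positive diagonal scaling $\mLambda^{-1}$ preserves signs and complementarity, with the conditions $\x\geq\0$, $\nabla f(\x)\geq\0$, $\inprod{\nabla f(\x)}{\x}=0$, and both establish their equivalence to optimality via the first-order optimality condition (the paper reuses \cref{prop:properties-object} for necessity and checks sufficiency directly, which is the same content as your KKT argument). No gaps.
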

For $r(\x) = \|\mR(\x)\|_{\mLambda},$ a quantity termed \emph{natural residual}~\cite{mangasarian1994new}, local error bound are obtained as a corollary of the following theorem. 

\begin{theorem}[\cite{mangasarian1994new}, Theorem 2.1]\label{thm:error-bnd}
Let $\matM \in \R^{n \times n}$ be such that $\mathrm{LCP}(\matM, \0)$ has $\0$ as its unique solution. Then there exists $\mu > 0$ such that for each $\x \in \R^n$, we have $r(\x) \geq \mu \|\x - \vxs\|$, where $\vxs$ is a solution to $\mathrm{LCP}(\matM, \vq)$ that is closest to $\x$ under the norm $\|\cdot\|.$ 
\end{theorem}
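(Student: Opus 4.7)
The plan is a proof by contradiction that exploits (i) the piecewise linearity of the natural residual $\mR(\vx) = \vx - (\vx - \matM\vx - \vq)_+$ and (ii) the $R_0$ hypothesis on $\matM$. If no such $\mu > 0$ worked, there would exist a sequence $\{\vx^k\}$ with $\vx^k \notin S$ (where $S$ denotes the solution set of $\mathrm{LCP}(\matM, \vq)$) and corresponding closest solutions $\vxs^k \in S$ such that $r(\vx^k)/\|\vx^k - \vxs^k\| \to 0$. My goal is to extract from this sequence a \emph{nonzero} solution of $\mathrm{LCP}(\matM, \0)$, contradicting the $R_0$ hypothesis. I split the analysis into a bounded and an unbounded case for $\|\vx^k - \vxs^k\|$.

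For the bounded case, I thin to a convergent subsequence so that $\vx^k \to \bar\vx$ and $\vxs^k \to \bar\vxs \in S$ (closedness of $S$ follows from continuity of $\mR$). Continuity of $r$ forces $r(\bar\vx) = 0$, so $\bar\vx \in S$; the nearest-solution property of $\vxs^k$ then forces $\bar\vxs = \bar\vx$, hence $\|\vx^k - \vxs^k\| \to 0$. To finish this case, I would invoke a local Hoffman-type bound on the finitely many polyhedral cells on which $\mR$ is affine around $\bar\vx$: on each such cell the zero set of the restricted affine map is a polyhedron, so Hoffman's lemma supplies a uniform local lower bound $r(\vx) \geq c \|\vx - \vxs\|$ for $\vx$ near $\bar\vx$, contradicting the assumed ratio going to zero. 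For the unbounded case, normalize $\vz^k = (\vx^k - \vxs^k)/\|\vx^k - \vxs^k\|$ and extract $\vz^k \to \vz^*$ with $\|\vz^*\| = 1$. Partition $\R^n$ into the finitely many polyhedral cells indexed by which of $x_i$ or $(\matM\vx + \vq)_i$ attains the minimum in each coordinate of $\mR$, and thin again so that both $\{\vx^k\}$ and $\{\vxs^k\}$ lie in fixed cells. Dividing the cell's affine representation of $\mR(\vx^k) - \mR(\vxs^k)$ by $\|\vx^k - \vxs^k\|$, using $\mR(\vxs^k) = \0$, and passing to the limit yields, after a coordinate-wise check of the active sets, that $\vz^* \geq \0$, $\matM \vz^* \geq \0$, and $\langle \vz^*, \matM \vz^*\rangle = 0$. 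Hence $\vz^*$ solves $\mathrm{LCP}(\matM, \0)$, and the $R_0$ hypothesis forces $\vz^* = \0$, contradicting $\|\vz^*\| = 1$.

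The main obstacle is the careful limit extraction in the unbounded case: ensuring that each of the three complementarity conditions survives the rescaling. Nonnegativity $\vz^* \geq \0$ comes from $\vxs^k \geq \0$ together with the observation that any coordinate of $\vx^k - \vxs^k$ becoming appreciably negative would force a comparably large residual at $\vx^k$, contradicting $r(\vx^k) = o(\|\vx^k - \vxs^k\|)$; $\matM \vz^* \geq \0$ follows from the analogous argument applied to $\matM \vx^k + \vq \geq \0$ modulo a vanishing residual; and $\langle \vz^*, \matM \vz^*\rangle = 0$ follows because coordinates where $\vz^*_i > 0$ eventually have $x_i^k > (\matM\vx^k + \vq)_i$ (the ``primal'' active index), which, combined with the complementarity of $\vxs^k$ and the residual bound, forces $(\matM \vz^*)_i = 0$. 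The successive thinning of $\{\vx^k\}$ and $\{\vxs^k\}$ into fixed cells, together with the fact that there are only finitely many cells, is what makes these coordinate-wise case analyses go through cleanly and what ultimately converts the $R_0$ hypothesis into the desired global error bound.
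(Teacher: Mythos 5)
First, a framing note: the paper does not prove this statement at all --- it is imported verbatim as Theorem~2.1 of \cite{mangasarian1994new} and used as a black box --- so there is no in-paper proof to compare against. Your outline (contradiction, a bounded/unbounded dichotomy, normalization of $\x_k - \x_k^\star$, extraction of a nonzero solution of $\mathrm{LCP}(\matM,\0)$, and a polyhedral local error bound) is essentially the classical route to this result, and the overall strategy is sound.

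There is, however, one concrete gap: you silently rely on, but never establish, the fact that the $R_0$ hypothesis forces the solution set $S$ of $\mathrm{LCP}(\matM,\vq)$ to be \emph{bounded}. This is needed in your bounded case, where boundedness of $\|\x_k - \x_k^\star\|$ alone does not let you extract a convergent subsequence of $\{\x_k\}$ (the pairs could drift to infinity along an unbounded $S$). It is needed more seriously in your unbounded case: your justification for $\mathbf{z}^\star \geq \0$ --- that a coordinate of $\x_k - \x_k^\star$ becoming appreciably negative would force a large residual at $\x_k$ --- is false as stated, because $r(\x_k)$ carries no information about $\x_k^\star$; if $[\x_k^\star]_i$ grows proportionally to $\|\x_k - \x_k^\star\|$ while $[\x_k]_i$ stays bounded, the $i$-th coordinate of your normalized vector tends to a strictly negative limit while $r(\x_k)$ is unaffected. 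The same issue infects $\matM\mathbf{z}^\star \geq \0$ and $\inprod{\mathbf{z}^\star}{\matM\mathbf{z}^\star} = 0$, both of which need $\matM\x_k^\star + \vq = O(1)$. The repair is standard and uses your own normalization trick once more: an unbounded sequence of solutions in $S$, normalized, yields a nonzero solution of $\mathrm{LCP}(\matM,\0)$, contradicting $R_0$; with $\x_k^\star$ thus confined to a compact set, all three limit claims go through, e.g.\ $[\x_k]_i \geq \mR_i(\x_k) \geq -c\, r(\x_k) = o(\|\x_k - \x_k^\star\|)$ combined with $[\x_k^\star]_i = O(1)$ gives the nonnegativity of the limit. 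A secondary, smaller gloss: ``Hoffman on each cell'' in the bounded case bounds the distance to the zero set of the affine piece, which need not lie inside $S$ outside that cell; you should apply Hoffman to the polyhedron obtained by intersecting that zero set with the closed cell (or invoke Robinson's upper-Lipschitz property of polyhedral multifunctions, which is the clean packaging of the local error bound you want). Both defects are repairable, but as written the unbounded case does not close.
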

~\cref{thm:error-bnd} applies to our problem due to the nonnegativity (and nondegeneracy) of $\ma$ and choosing $\|\cdot\| = \|\cdot\|_{\mLambda}.$ By arguing that \cref{thm:FinalAKGKBound} provides an upper bound on $r(\xtktotal),$ in expectation, we then obtain our final result below. 

\begin{restatable}{theorem}{thmRestarts}\label[thm]{thm:restarts}
Given an error parameter $\epsilon > 0$ and $\x_0 = \0$, consider the following  algorithm $\mathcal{A}:$
\ifdefined\isicml
\begin{tcolorbox}
$\mathcal{A}:$ \textbf{SI-NNLS+ with Restarts}

Initialize: $k = 1$

Run (Lazy) SI-NNLS+ initialized at $\x_{k-1}$ until the output point $\xtktotal^k$ satisfies $r(\xtktotal^k) \leq \frac{1}{2}r(\x_{k-1})$. Restart SI-NNLS+ and initialize at $\x_k = \xtktotal^{k}$. Increment $k.$ Repeat until $r(\xtktotal^k) \leq \epsilon.$
\end{tcolorbox}
\else 
\begin{tcolorbox}
$\mathcal{A}:$ \textbf{SI-NNLS+ with Restarts}

Initialize: $k = 1$.

Initialize Lazy SI-NNLS+ at $\x_{k-1}$. 

Run Lazy SI-NNLS+ until the output $\xtktotal^k$ satisfies $r(\xtktotal^k) \leq \frac{1}{2}r(\x_{k-1})$. 

Restart Lazy SI-NNLS+ initializing at $\x_k = \xtktotal^{k}$. 

Increment $k.$ 

Repeat until $r(\xtktotal^k) \leq \epsilon.$
\end{tcolorbox}
\fi 
Then, the total expected number of arithmetic operations of $\mathcal{A}$ is at most 
$$
    O\Big(\mathrm{nnz}(\ma)\Big(\log n + \frac{\sqrt{n}}{\mu}\Big)\log\Big(\frac{r(\x_0)}{\epsilon}\Big)\Big).
$$
As a consequence, given $\bar{\epsilon}> 0,$ the total expected number of arithmetic operations until a point with $f(\x) - f(\vxs) \leq \bar{\epsilon}|f(\vxs)|$ can be constructed by $\mathcal{A}$ is bounded by $$O\Big(\mathrm{nnz}(\ma)\Big(\log n + \frac{\sqrt{n}}{\mu}\Big)\log\Big(\frac{n }{\mu\bar{\epsilon}}\Big)\Big).$$
\end{restatable}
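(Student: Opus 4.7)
The plan is to combine the primal-gap bound of \cref{thm:FinalAKGKBound} with the local error bound from \cref{thm:error-bnd} (applicable here via \cref{prop:LCP-connection}) to show that each restart contracts the natural residual $r(\cdot)$ geometrically with constant probability, then iterate this halving. The crucial technical step I would establish first is the inequality $r(\x)^2 \leq 2n(f(\x) - f(\vxs))$. To derive it, exploit coordinate-wise smoothness of $f$ along coordinate $j$ with constant $\Lambda_{jj}=\|\ma_{:j}\|_2^2$: the single-coordinate projected gradient step $\x_j^+$ satisfies $f(\x_j^+) \leq f(\x) - \tfrac{1}{2}\Lambda_{jj}[\mR(\x)]_j^2$ (one can verify this using the projection optimality of $\x_j^+$). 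Summing over $j$, picking the index $j^\star$ achieving the largest decrease, and using feasibility of $\x_{j^\star}^+$ gives $f(\x)-f(\vxs)\geq f(\x)-f(\x_{j^\star}^+)\geq \tfrac{1}{2n}\|\mR(\x)\|_{\mLambda}^2$, which is the desired inequality.

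Taking expectation and substituting \cref{thm:FinalAKGKBound} then gives $\E[r(\xtktotal^k)^2] \leq n\|\x_{k-1}-\vxs\|_{\mLambda}^2/A_K$; the local error bound $\|\x_{k-1}-\vxs\|_{\mLambda}\leq r(\x_{k-1})/\mu$ converts this into $\E[r(\xtktotal^k)^2]\leq n\, r(\x_{k-1})^2/(\mu^2 A_K)$. Setting the inner budget $K = \Theta(n\log n + n^{1.5}/\mu)$, the $A_K$-growth bound from \cref{thm:FinalAKGKBound} ensures $A_K \geq 16 n/\mu^2$, so $\E[r(\xtktotal^k)^2]\leq r(\x_{k-1})^2/16$. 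Markov's inequality then gives $\Pr[r(\xtktotal^k)\leq r(\x_{k-1})/2]\geq 3/4$, so the expected number of inner SI-NNLS+ calls per successful outer restart is $O(1)$. Each inner call of length $K$ costs $O(\mathrm{nnz}(\ma)(\log n + \sqrt{n}/\mu))$ in expectation (amortized over uniformly-sampled coordinate updates), and the outer loop terminates after at most $\log_2(r(\x_0)/\epsilon)$ successful halvings, which yields the first complexity bound. For the multiplicative function-value corollary, one additional invocation of \cref{thm:FinalAKGKBound} from the terminal iterate $\x^{\mathrm{out}}$ (for which $\|\x^{\mathrm{out}}-\vxs\|_{\mLambda}\leq \epsilon/\mu$) gives $\E[f(\x)-f(\vxs)]\leq \epsilon^2/(2\mu^2 A_K)$; combined with the lower bound on $|f(\vxs)|$ from \cref{prop:properties-object}, choosing $\epsilon$ polynomial in $\mu\bar\epsilon/n$ makes $f-f^\star \leq \bar\epsilon|f^\star|$, and $\log(r(\x_0)/\epsilon)$ absorbs into $O(\log(n/(\mu\bar\epsilon)))$.

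The main technical obstacle is obtaining the tight factor $2n$ in $r(\x)^2\leq 2n(f(\x)-f(\vxs))$: any additional factor of $n$ here would inflate $K$ by $\sqrt{n}$ and spoil the claimed $\sqrt{n}/\mu$ scaling. The best-coordinate proximal-descent derivation above is the cleanest route and crucially leverages both the quadratic structure of $f$ and the fact that $\mLambda$ is exactly the matrix of coordinate-wise smoothness constants. A secondary, mostly bookkeeping challenge is correctly composing expected costs across randomized restart trials and outer halvings; this is handled by the standard observation that the number of trials per restart is a geometric random variable with success probability at least $3/4$ (hence expectation at most $4/3$), and the measurability of $r(\x_{k-1})$ is preserved by conditioning on the $\sigma$-algebra generated by prior successful restarts, so the expected total cost factors cleanly into the product claimed in the theorem.
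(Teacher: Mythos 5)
Your proposal follows the same skeleton as the paper's proof: the key inequality $r(\x)^2\le 2n\,(f(\x)-f(\vxs))$ via a best-single-coordinate proximal step is exactly the paper's \cref{prop:rleqsqrtf} (with the same argument), and chaining it with \cref{thm:FinalAKGKBound} and the error bound of \cref{thm:error-bnd} to get $\E[r^2(\widetilde{\x}_K)]\le \frac{n}{\mu^2 A_K}r^2(\x_0)$, followed by Markov, is precisely what the paper does. You diverge in two localized places. First, for the cost of one restart phase you impose a \emph{fixed} inner budget $K$ with $A_K\ge 16n/\mu^2$, get success probability $\ge 3/4$, and treat the number of attempts as a geometric variable. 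This analyzes repeated \emph{independent} fixed-budget runs, whereas the algorithm in the theorem is a single run-until-halved with a data-dependent stopping time; successive checkpoints of one run are not independent, so the geometric-trials bound does not apply verbatim. The paper instead bounds the stopping time directly via $\E[\ktotal]=\sum_{i\ge 0}\Pr[\ktotal>i]\le \sum_i \min\{1,\tfrac{4n}{\mu^2 A_i}\}$ and sums the tail using the growth rate $A_i\ge (i-\tfrac52 n\log n)^2/(36n^2)$ from \cref{prop:RateOfGrowthOfAk}. Your version is correct for a trivially modified algorithm with the same complexity, and the one-line tail-sum repairs it for the algorithm as stated, so this is a cosmetic rather than substantive gap--but it is worth being precise about which algorithm the expectation is taken over.

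Second, for the multiplicative function-value corollary you propose running one additional SI-NNLS+ phase from the terminal iterate and invoking the primal error bound $\E[f-f(\vxs)]\le \|\x^{\mathrm{out}}-\vxs\|_{\mLambda}^2/(2A_K)$ together with $\|\x^{\mathrm{out}}-\vxs\|_{\mLambda}\le r(\x^{\mathrm{out}})/\mu$ and $r(\x_0)^2\le 2n|f(\vxs)|$. The paper avoids the extra phase: it outputs the prox point $\widetilde{\x}-\mR(\widetilde{\x})$ and proves the dedicated bound $f(\widetilde{\x}-\mR(\widetilde{\x}))-f(\vxs)\le \big((n-1)+\tfrac{n+1}{\mu}\big)r^2(\widetilde{\x})$, which converts a small residual into a small relative gap after $O(\log(n/(\mu\bar\epsilon)))$ halvings. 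Your route works and is arguably simpler to verify, provided you carry out the calibration of $\epsilon$ against $|f(\vxs)|$ through the identity $r(\x_0)^2\le 2n|f(\vxs)|$ rather than through the matrix-constant lower bound on $|f(\vxs)|$ from \cref{prop:properties-object}, since only the former keeps the iteration count scale-invariant; as written, "choosing $\epsilon$ polynomial in $\mu\bar\epsilon/n$" leaves this cancellation implicit.
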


\section{Numerical Experiments}\label{sec:num-exps}

\begin{figure*}[ht!]
\centering
\subfigure[\texttt{real-sim}]{\includegraphics[width=0.24\textwidth]{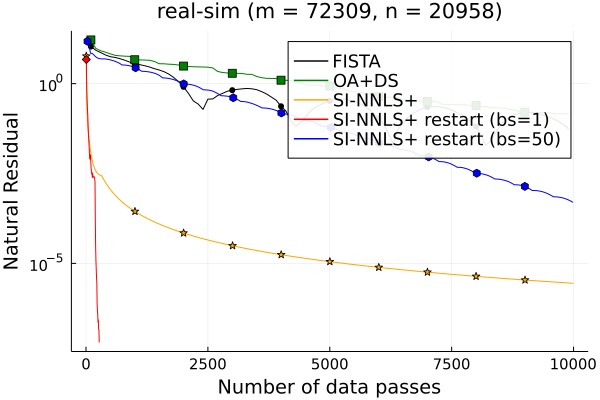}\label{fig:smooth-5-blog}}
\subfigure[\texttt{real-sim}]{\includegraphics[width=0.24\textwidth]{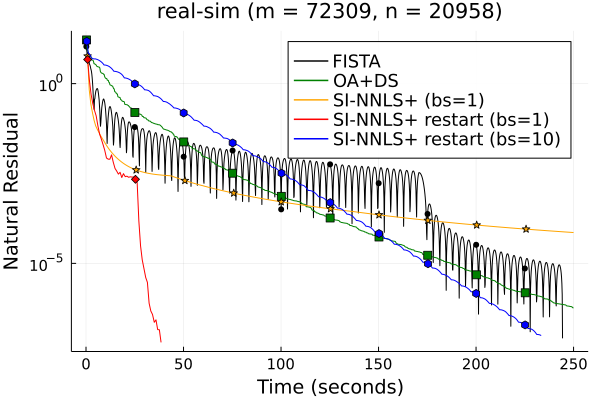}\label{fig:smooth-10-blog}}
 \subfigure[\texttt{real-sim}]{\includegraphics[width=0.24\textwidth]{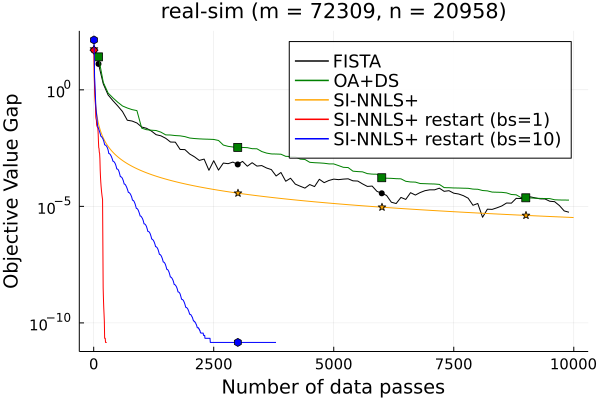}\label{fig:lip-med-20}}
 \subfigure[\texttt{real-sim}]{\includegraphics[width=0.24\textwidth]{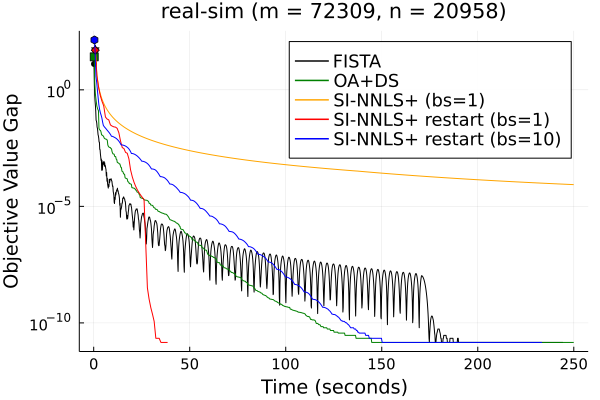}\label{fig:lip-med-40}}
\subfigure[\texttt{new20}]{\includegraphics[width=0.24\textwidth]{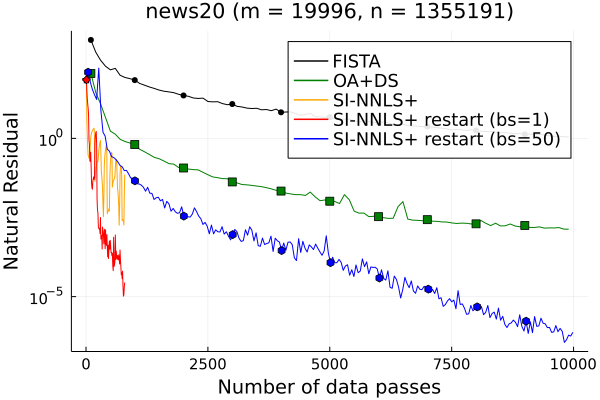}\label{fig:smooth-20-blog}}
\subfigure[\texttt{new20}]{\includegraphics[width=0.24\textwidth]{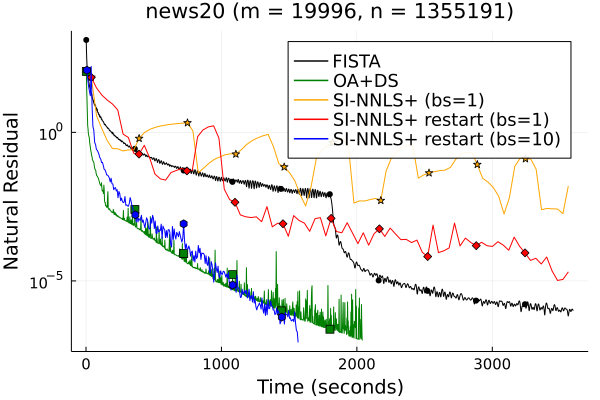}\label{fig:smooth-40-blog}}
\subfigure[\texttt{new20}]{\includegraphics[width=0.24\textwidth]{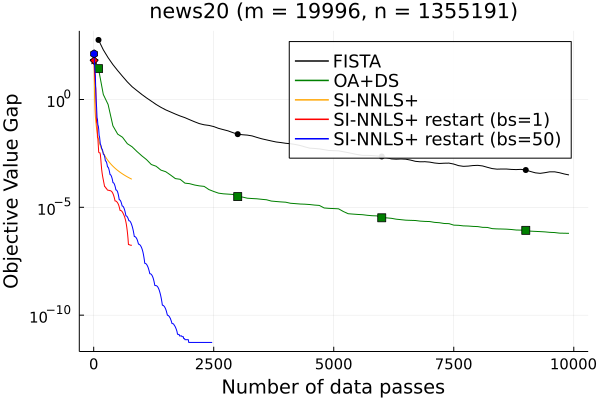}\label{fig:acc-lip-5-blog}}
 \subfigure[\texttt{new20}]{\includegraphics[width=0.24\textwidth]{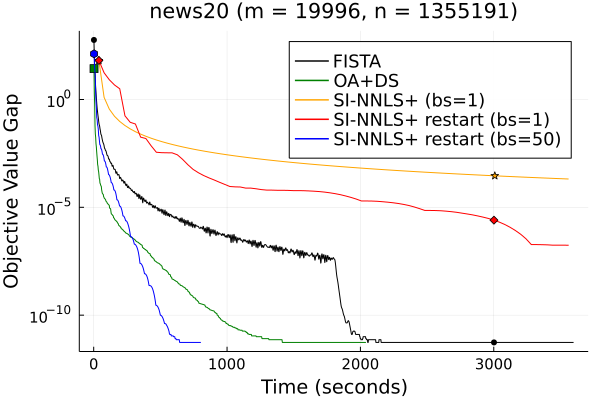}\label{fig:acc-lip-10-blog}}
\subfigure[\texttt{E2006train}]{\includegraphics[width=0.24\textwidth]{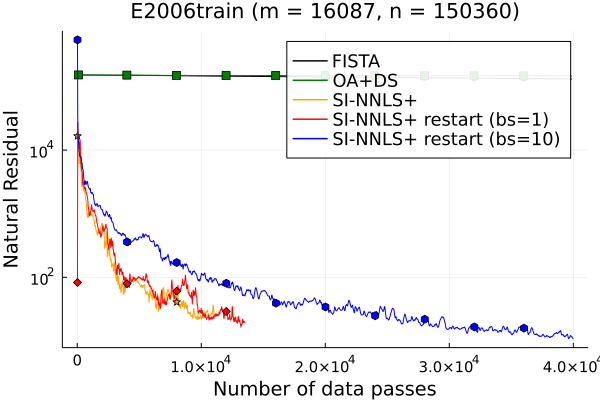}\label{fig:lip-5-blog}}
 \subfigure[\texttt{E2006train}]{\includegraphics[width=0.24\textwidth]{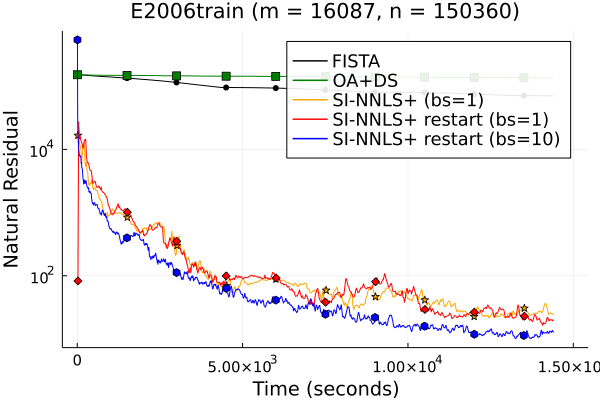}\label{fig:lip-10-blog}}
 \subfigure[\texttt{E2006train}]{\includegraphics[width=0.24\textwidth]{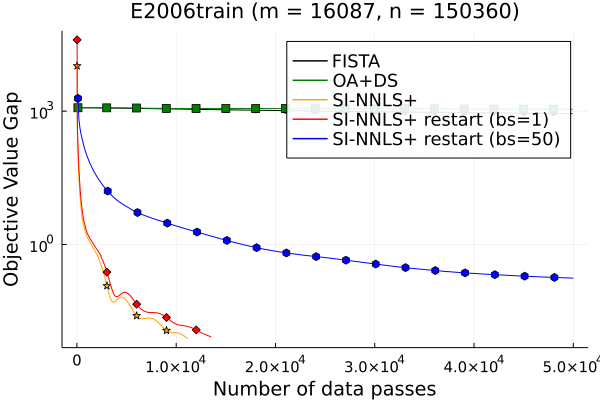}\label{fig:acc-lip-med-20}}
 \subfigure[\texttt{E2006train}]{\includegraphics[width=0.24\textwidth]{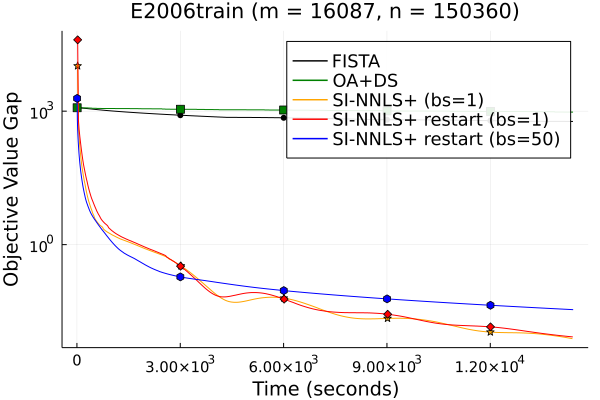}\label{fig:acc-lip-med-40}}
\caption{Comparison of SI-NNLS+~to  FISTA and  OA+DS for NNLS+ \eqref{eq:nnls_original} on \texttt{real-sim}, \texttt{news20} and \texttt{E2006train} datasets.}
\label{fig:2}
\end{figure*}

We conclude our paper by presenting the numerical performance of  SI-NNLS+ and its restart versions (see the efficient implementation version in Algorithm~\ref{alg:SI-NNLS-impl}) against  FISTA~\cite{beck2009fast, nesterov2013gradient}, a general-purpose large-scale optimization algorithm, and OA+DS (Ordinary Algorithm  with  Diminishing Scalar) designed by~\cite{kim2013non} specifically for large-scale non-negative least square problems. 

As an accelerated algorithm, FISTA has the optimal $1/k^2$ convergence rate; OA+DS, while often efficient in practice,  has only an asymptotic convergence guarantee. For FISTA, we compute the tightest Lipschitz constant (i.e., the spectral norm $\|\ma\|$); for OA+DS, we follow the best practices laid out by~\cite{kim2013non}. For our SI-NNLS+ algorithm  and its restart version with batch size $\text{bs} = 1,$ we follow Algorithm~\ref{alg:SI-NNLS-impl} and the restart strategy in Section~\ref{sec:Restart}.\footnote{The algorithm is implemented for the non-scaled version of the problem, \eqref{eq:main-problem}. As discussed in Section~\ref{sec:problem-setup}, the scaling in our analysis is only done for notational convenience and it has no effect on the algorithm.} For the restart version with batch size larger than $1$, we choose the best batch size in $\{10, 50, 300, 500\}$ and compute the block coordinate Lipschitz constants as the spectral norm of the corresponding block matrices. 

We evaluate the performance of the algorithms on the large-scale sparse datasets \texttt{real-sim}, \texttt{news20}, and \texttt{E2006train} from the LibSVM website\footnote{The website to download LibSVM is \href{https://www.csie.ntu.edu.tw/~cjlin/libsvmtools/datasets/}{here}.} for comparison.
Both \texttt{real-sim} and \texttt{news20} datasets have non-negative data matrices, but the labels may be negative. When there exist negative labels, it is possible for the elements of $\ma^\top\vb$ to be negative. In such a case, per the discussion in Section \ref{sec:contributions}, we can simply remove the corresponding columns of $\ma$ and solve an equivalent problem with smaller dimension. 

On the other hand, the data matrix in \texttt{E2006train} dataset is not non-negative, which means that this dataset does not satisfy the assumption required for the analysis of Algorithm~\ref{alg:SI-NNLS-analysis}. However, in Algorithm~\ref{alg:SI-NNLS-analysis}, the nonnegativity assumption for the data matrix is used only at two places: (1) to provide additional upper bounds for variables and (2) to translate the convergence bound into a multiplicative error guarantee. Neither of these two arguments are necessary for establishing correctness of the algorithm. As a result, for \texttt{E2006train}, we can run Algorithm~\ref{alg:SI-NNLS-analysis} by considering only the nonnegative constraints. We note, however, that we have not argued about linear convergence of the algorithm with restarts for problem instances that are not non-negative, and this example mainly serves for illustration of empirical performance. 

All algorithms are implemented in the Julia programming language for high-performance computing and run on a server with 16 AMD EPYC 7402P 24-Core Processors. 

\subsection{Results of Experiments}

To compare all implemented algorithms, 
we plot the natural residual/objective value gap versus  number of data passes/time in \cref{fig:2}. 

Figure~\ref{fig:2}(a)-(d) shows that SI+NNLS+ is better than FISTA and OA+DS in terms of number of data passes on the \texttt{real-sim} dataset. Note that all of them have only sublinear convergence, but all the SI-NNLS+ algorithms with restart strategy achieve linear convergence and are thus are much faster in obtaining a high-accuracy solution. Specifically, with $\text{bs}  = 1,$ we have a much better coordinate Lipschitz constant than the case of $\text{bs}=10$ and thus the case of $\text{bs}  = 1$ dominates $\text{bs}=10$.  As FISTA and OA+DS take less time accessing the full dataset once, they have lower runtimes than SI-NNLS+ but are beaten by SI-NNLS+ with restart and $\text{bs}=1$. 

In Figure~\ref{fig:2}(e)-(h), on the \texttt{news20} dataset, in terms of number of data passes,  SI-NNLS+ and its restart version with $\text{bs}=1$ are dominant. However, as \texttt{news20} is a very sparse dataset, letting $\text{bs}=1$  significantly increases the total time to access the full data once due to the overhead per iteration. As a result, both the SI-NNLS+ with $\text{bs} = 1$ and its restart version have the worst runtimes, while SI-NNLS+ with $\text{bs} = 10$ still attains the best runtime. 

Figure~\ref{fig:2}(i)-(l) shows the performance comparison on the \texttt{E2006train} dataset. On this dataset, both FISTA and OA+DS ran for $4$ hours without visibly reducing the function value, whereas our (block) coordinate algorithm outperforms them in both number of data passes and time. This is because although the whole problem may be very ill-conditioned, the subproblem on certain coordinates can be relatively well-conditioned. 

\ifdefined\isicml
\vspace{-3mm}
\else 

\section{Discussion}\label[sec]{sec:discussion}
 We introduce SI-NNLS+, a primal-dual, scale-invariant, accelerated algorithm for non-negative least squares on non-negative data. This rate is achieved by leveraging structural properties specific to this problem and a novel acceleration technique. Incorporating a restart strategy helps us attain linear convergence on this problem, which we also see in our empirical results on various datasets. It remains an open question to extend our work to other problem classes involving non-negative data or variables. 

\section*{Acknowledgements}

This work was supported in part by the NSF grants 2007757 and 2023239. Part of this work was done while SP was visiting JD at UW-Madison and while JD and CS were visiting Simons Institute for the Theory of Computing. 
We thank Daniel Kane for a useful pointer to bounding one of the expectations in the proof of \cref{thm:restarts}.  

\fi

\ifdefined\isicml
\else
\newpage 
\fi 
\bibliography{nnlr.bib}

\newpage
\ifdefined\isicml
\appendix
\onecolumn
\else 
\begin{appendices}
\fi 

\section{Appendix: Omitted Technical Details}\label{sec:app-pfs-prop-props}
In this section, we provide proofs for all the claims made in the main body of the paper, additional supporting propositions and lemmas, and also omitted details of the implementation of our algorithm. The proofs are provided in the order in which the corresponding statement appears in the main body. 
In \cref{sec:appprelims}, we prove the properties of our problem. We again emphasize that these properties are crucial to achieving our goal of a scale-invariant algorithm for~\cref{eq:main-problem}. 
\cref{sec:mainAnalysisProofsAppendix} contains the proofs of all our results pertaining to  the convergence analysis of \cref{alg:SI-NNLS-analysis}, with proofs of the growth rate of the scalar sequences $\{a_i\}, \{a_i^k\},$ and $A_k$ grouped separately in \cref{sec:akgrowth}, owing to their more technical nature. 

\subsection{Omitted Proof from \cref{sec:prelims}: Properties of Our Objective}\label{sec:appprelims}
\propproperties* 
\begin{proof}
We recall the first-order optimality condition stated in  \cref{eq:first-order-opt-cond}: for all $\x\geq\0,$ we have $\inprod{\nabla f(\vxs)}{\vx-\vxs}\geq0$; we repeatedly invoke this inequality in the proof below. 

\begin{enumerate}
\item Suppose there exists a coordinate $j$ at which \cref{enu:grad-nonnegative}
does not hold and instead, we have $\nabla_{j}f(\vxs)<0.$ Consider
$\x \geq \0$ such that $x_{i}=\xs_{i}$ for all $i\neq j$ and 
let $x_{j} = \xs_{j} + \epsilon$ for some $\epsilon > 0.$ Then,  \cref{eq:first-order-opt-cond} becomes
$\nabla_{j}f(\vxs)\cdot \epsilon\geq0.$ Under the assumption  $\nabla_{j}f(\vxs)<0$, this is an invalid inequality,
thus contradicting our assumption. 
\item From \cref{enu:grad-nonnegative}, we know that $\nabla f(\vxs)\geq\0.$
If $\nabla_{i}f(\vxs)>0$, and if $\xs_{i}>0,$ then by picking a
vector $\vx$ such that $x_{j}=\xs_{j}$ for $j\neq i$ and $x_{i}=\xs_i-\gamma$
for any $\gamma \in (0,\, \xs_i),$ we violate \cref{eq:first-order-opt-cond}. Therefore it must be the case that if $\nabla_{i}f(\vxs)>0,$
then $\xs_{i}=0.$ Thus we have  
\[
0=\inprod{\vxs}{\nabla f(\vxs)}=\inprod{\vxs}{\mat\ma\vxs-\1}.
\]
Therefore, $f(\vxs)=\frac{1}{2}\inprod{\vxs}{\mat\ma\vxs}-\1^{\top}\vxs=-\frac{1}{2}\inprod{\vxs}{\mat\ma\vxs}=-\frac{1}{2}\1^{\top}\vxs.$ 
\item From the proof of \cref{enu:optval}, we have $\inprod{\vxs}{\nabla f(\vxs)}=0$.
We also have $\vxs \geq \0$ and, from \cref{enu:grad-nonnegative}, that $\nabla f(\vxs)\ge\0$. Therefore, if $\vxs_{i}>0$ for some coordinate $i$ then it must be that
$\nabla_{i}f(\vxs)=0.$ That is, $1=\inprod{\ma_{:i}}{\ma\vxs}$. Combining this equality with the fact that $\ma$ and $\vxs$ are both coordinate-wise non-negative gives 
\[
1 = \inprod{\ma_{:i}}{\ma\vxs}\geq\inprod{\ma_{:i}}{\ma_{:i}\xs_{i}}, 
\]
which implies $\xs_{i}\leq\frac{1}{\norm{\ma_{:i}}_2^{2}}$ for all coordinates $i$.  

\item The lower bound follows immediately by combining \cref{enu:optval} and \cref{enu:x-in-box}. For the upper bound, we need to find \emph{a} feasible point $\hx$ and compute the function value at $\hx$, since $f(\vxs) = \min_{\y\geq \0} f(\y) \leq f(\hx)$. Let $\hx = \gamma \ve_k$ for some $\gamma>0$. Then, 
$$f(\hx) = \frac{1}{2}\gamma^2 \|\ma_{:k}\|_2^2 - \gamma.$$
Let $\gamma = \frac{1}{\|\ma_{:k}\|_2^2}$. Then, $f(\hx) = - \frac{1}{2\|\ma_{:k}\|_2^2}$. We pick $k = \arg\min_{i\in [n]} \|\ma_{:i}\|_2$, therefore $f(\vxs)\leq -\frac{1}{2 \min_{i \in [n]} \|\ma_{:i}\|_2^2}$ as claimed. 
\end{enumerate}
\end{proof}

\subsection{Omitted Proofs from \cref{sec:alg-ana}: Analysis of Algorithm} \label[sec]{sec:mainAnalysisProofsAppendix}
\subsubsection{Proofs from \cref{sec:GapEstConst}: Results on Upper and Lower Estimates}
We first show the results stating $\uk$ and $\lk$ are indeed valid upper and lower (respectively) estimates of the Lagrangian. 

\lemLxtkUky*
\begin{proof}
By evaluating the Lagrangian described by \cref{eq:lagrangian} at $\x=\xtk$, and by definition  of $\psitk$ from \cref{eq:def-psit}, we obtain the following upper
bound on the Lagrangian at $(\xtk, \y)$.
\begin{align*}
\la(\xtk,\y) &= \inprod{\ma \xtk}{\y} - \frac{1}{2}\|\y\|_2^2 - \1^\top \xtk\\  
 & =\frac{1}{\ak}\sum_{i\in[k]}\aiuk\left[\inprod{\ma\xi}{\y}-\frac{1}{2}\norm{\y}^{2} -\1^{\top}\xi\right] = \frac{1}{\ak}\psitk(\y)\\
 & \leq \frac{1}{\ak}\psitk(\yk) - \frac{1}{2}\|\y-\yk\|_2^2 = \uk(\y),
\end{align*} where the final steps are by \cref{eq:sc-psitk-yk-y} and ~\cref{def-uk}. 
\end{proof}

\lemlklowerbound* 
\begin{proof}
First, evaluating \cref{eq:lagrangian} at $\ytk$ gives 
\[
\la(\u,\ytk)=\inprod{\ma\u}{\ytk}-\1^{\top}\u-\frac{1}{2}\norm{\ytk}_2^{2} .
\]
Taking the expectation on both sides, applying the definition of $\ytk$, convexity of $\frac{1}{2}\|{}\cdot{}\|^2$, and Jensen's inequality, and adding and subtracting $\frac{1}{\ak}\mathbb{E}\sum_{i\in[k]}  \ai \inprod{\ma\x}{\oyi} + \frac{1}{\ak}\phio(\vu)$ gives 
\begin{align*}
\mathbb{E}\la(\u,\ytk)&\geq\frac{1}{\ak}\mathbb{E}\left[ \sum_{i\in[k]}  \ai \left[ \inprod{\ma\u}{\yi}-\1^{\top}\u-\frac{1}{2} \norm{\yi}_2^{2} \right]\right] \\
            &= \frac{1}{\ak}\mathbb{E}\left[\sum_{i\in[k]}  \ai \left[ \inprod{\ma\u}{\oyi}-\1^{\top}\u - \frac{1}{2} \norm{\yi}_2^{2} \right] \right]+ \frac{1}{A_k}\mathbb{E}\left[\phio(\u)\right] - \frac{1}{A_k}\mathbb{E}\left[\phio(\u)\right] \\
            &\quad+\frac{1}{\ak}\mathbb{E}\left[\sum_{i\in[k]}  \ai \inprod{\ma\u}{\yi-\oyi}\right]. 
\end{align*} 
We continue the analysis as 
\begin{align*} 
\mathbb{E}\la(\u,\ytk)            &\geq \frac{1}{\ak}\mathbb{E}\left[\phitk(\u)\right]  - \frac{1}{A_k}\mathbb{E}\left[\phio(\u)\right]+\frac{1}{\ak}\sum_{i\in[k]}  \ai \mathbb{E} \inprod{\ma\u}{\yi-\oyi} -\frac{1}{2\ak}\mathbb{E}\sum_{i\in [k]}\ai\|\yi\|^2_2 \\
            &\geq \frac{1}{\ak} \mathbb{E} \left[ \phitk(\xk)\right]  + \mathbb{E}\left[\frac{1}{2\ak} \|\u - \xk\|_{\mLambda}^2\right]- \frac{1}{\ak}\mathbb{E}[\phio(\u)]+\frac{1}{\ak}\mathbb{E}\left[\sum_{i\in[k]}  \ai \inprod{\ma\u}{\yi-\oyi}\right] \\ 
            & \quad 
            -\frac{1}{2\ak} \mathbb{E}\sum_{i\in [k]}\ai \|\yi\|^2_2\\ 
            &= \mathbb{E} \lk (\u), 
\end{align*} the first step comes from \cref{eq:expPhiBound}, the second step comes from \cref{phitk-strongconvexity}, and the final step comes from \cref{eq:def-Lk}. 
\end{proof}

\subsubsection{Proofs from \cref{sec:GapBound}}

We now describe three technical propositions that bound terms that show up in the proof of our result on bounding the scaled gap estimate. 

\begin{restatable}{proposition}{lemPsiktDiff}
\label[prop]{lem:Psikt-diff} For $\psitk$ defined in \cref{eq:def-psit}, with $\{a_i^k\}$ defined in \cref{aikrequirements}, we have for all $k \geq 1$,
\begin{align*}
\psitk(\yk)-\psitkm(\ykm)&\leq \akuk \left\{ \inprod{\yk}{\ma \x_{k}}-\frac{1}{2}\norm{\yk}_2^{2}-\1^{\top}\xk\right\}\\
&+ \sum_{i=1}^{k-1} (\aiuk - \aiukm) \left[ \inprod{\yk}{\ma \xi} - \frac{1}{2}\|\yk\|_2^2 - \1^\top \xi  \right]\\
&-\frac{A_{k-1}}{2}\norm{\yk-\ykm}_2^{2}.
\end{align*} 
\end{restatable}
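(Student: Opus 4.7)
The plan is to exploit strong concavity of $\psitkm$ together with the defining property of $\ykm$ as its maximizer, and then to compute the one-step change $\psitk(\yk)-\psitkm(\yk)$ explicitly from the definition of the primal estimate sequence.

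First, I would verify that $\psitkm$ is $A_{k-1}$-strongly concave with respect to $\|\cdot\|_2$. Inspecting \cref{eq:def-psit}, the coefficient of $-\tfrac12\|\y\|_2^2$ in $\psitkm$ is $\sum_{i=1}^{k-1} \aiukm$. Using \cref{aikrequirements} and telescoping,
\[
(a_1-(n-1)a_2)+\sum_{i=2}^{k-2}\bigl(na_i-(n-1)a_{i+1}\bigr)+na_{k-1}=\sum_{i=1}^{k-1}a_i = A_{k-1},
\]
so the claimed strong concavity constant is correct. The analogous argument appears already in the justification of \cref{eq:sc-psitk-yk-y}; I am simply repeating it one index earlier.

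Second, since $\ykm=\arg\max\psitkm$ and $\psitkm$ is $A_{k-1}$-strongly concave, an application of the first-order optimality condition \cref{eq:first-order-opt-cond} (as was done to obtain \cref{eq:sc-psitk-yk-y}) gives
\[
\psitkm(\yk)\le \psitkm(\ykm)-\frac{A_{k-1}}{2}\|\yk-\ykm\|_2^{2}.
\]
Rearranging, $\psitk(\yk)-\psitkm(\ykm)\le \psitk(\yk)-\psitkm(\yk)-\tfrac{A_{k-1}}{2}\|\yk-\ykm\|_2^{2}$.

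Third, I would compute $\psitk(\yk)-\psitkm(\yk)$ directly from \cref{eq:def-psit}: split the $i=k$ term off of the sum defining $\psitk(\yk)$ and group the remaining $i\le k-1$ terms with the corresponding terms of $\psitkm(\yk)$, so that for each $1\le i\le k-1$ the coefficient becomes $\aiuk-\aiukm$ and the $i=k$ term contributes a factor of $\akuk$. Substituting into the inequality from the previous paragraph yields exactly the stated bound.

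The only place where there is anything to check is the telescoping identity $\sum_{i=1}^{k-1}\aiukm = A_{k-1}$ used to establish the strong concavity parameter; beyond that the argument is pure bookkeeping, so I do not anticipate any substantive obstacle.
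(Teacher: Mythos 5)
Your proposal is correct and follows essentially the same route as the paper: the paper likewise writes $\psitk(\yk)-\psitkm(\ykm)$ as the explicit one-step change $\psitk(\yk)-\psitkm(\yk)$ (obtained by splitting off the $i=k$ term and regrouping coefficients) plus the strong-concavity/optimality bound $\psitkm(\yk)-\psitkm(\ykm)\leq-\tfrac{A_{k-1}}{2}\|\yk-\ykm\|_2^2$ from \cref{eq:sc-psitk-yk-y}. Your explicit verification of the telescoping identity $\sum_{i=1}^{k-1}\aiukm=A_{k-1}$ is a detail the paper establishes earlier in the text rather than inside this proof, but the argument is the same.
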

\begin{proof}
Evaluating $\psitk$ and $\psitkm$  as defined in \cref{eq:def-psit} at $\yk$ and subtracting, we have  
\begin{equation}\label[eq]{eq:psik-change}
\psitk(\yk)-\psitkm(\yk)=a_k \inprod{\ma^\top \yk - \1}{n\xk - (n-1) \xkm} - \frac{a_k}{2}\|\yk\|_2^2. 
\end{equation}
Next, applying \cref{eq:sc-psitk-yk-y} to $\psitkm$ at $\yk$ and $\ykm$ while using the fact that $\ykm$ minimizes $\psitkm$ gives 
\begin{equation}\label[ineq]{eq:psik-psik-1-change}
\psitkm(\yk)-\psitkm(\ykm)\leq-\frac{1}{2}A_{k-1}\norm{\yk-\ykm}_2^{2}.
\end{equation}
 To complete the proof, it remains to add \cref{eq:psik-change} and \cref{eq:psik-psik-1-change}.  
\end{proof}

\begin{restatable}{proposition}{proppropertiesphitk}
\label[prop]{prop:properties-phitk}
The random function $\phitk: \mathcal{X}\rightarrow \R$, $k \geq 2,$ defined
in \cref{eq:def-phitk} satisfies the following properties, with $\x_k$, $\y_k$, and $\overline{\y}_k$ evolving as per \cref{alg:SI-NNLS-analysis}.  
\begin{propenum}
\compresslist{
\item\label[prop]{enu:propSeparability} It is separable in its coordinates: 
$\phitk(\x)=\sum_{j\in [n]} \phitkj(x_j),$ 
where, for each $j \in [n]$, we define  $\phioj(x_j) = \frac{\|\ma_{:j}\|^2_2}{2}(x_j - [\vx_0]_j)^2 $, $\phi_{1,j}(x_j) = a_1 x_j (\ma^\top \overline{\y}_0 - \1)_j + \phioj(x_j)$, and for $k \geq 2$, 
\begin{equation}\label[eq]{eq-phitk-recursion-coordinatewise}
\begin{aligned}
    \phitkj(x_j) &= \phitkmj(x_j)
    + n \aik \ejjk \inprod{\mat \oykc - \1}{x_{j_k} \ejk}. 
\end{aligned}
\end{equation}
\item\label[prop]{enu:propXUpdateOneCoordinate} 
The primal variable $\xk$ is updated only on the $j_k^{\mathrm{th}}$ coordinate in each iteration: $\xk = \xkm+\gamma \ejk$ for some $\gamma$, and $[\xk]_j = [\xkm]_j$ for $j\neq j_k$.
\item\label[prop]{enu:expectedPhitk} For a fixed $\x\in \mathcal{X}$ and for $k \geq 1$, we have, over all the randomness in the algorithm,  \[\mathbb{E}\left[\phitk(\x)\right]= \mathbb{E}\left[\phio(\x)\right] + \sum_{i \in [k]} \ai \mathbb{E} \left[  \inprod{\mat \oyi - \1}{\x}\right].\numberthis\label[eq]{eq-expectedPhitk}\]
}
\end{propenum}
\end{restatable}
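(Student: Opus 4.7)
}

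My plan is to establish each of the three claims in order, using induction on $k$ for parts (a) and (c), and deducing part (b) directly from part (a).

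First, for separability in (a), I would proceed by induction on $k$. The base cases are immediate: $\phi_0(\x) = \tfrac12 \|\x-\x_0\|_{\mLambda}^2 = \sum_{j\in[n]} \tfrac{\|\ma_{:j}\|_2^2}{2}(x_j - [\x_0]_j)^2$ is a sum of coordinate-wise terms, and $\phi_1(\x) = \phi_0(\x) + a_1 \sum_{j\in[n]} x_j(\ma^\top \bar\y_0 - \1)_j$ is separable, so $\phi_{0,j}$ and $\phi_{1,j}$ are as stated. For the inductive step at $k \geq 2$, the recursion in~\cref{eq:def-phitk} adds only the term $n a_k \inprod{\ma^\top \oykc - \1}{x_{j_k}\ejk}$, which involves only coordinate $j_k$. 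Combined with the inductive separability of $\phitkm$, this yields the per-coordinate recursion~\cref{eq-phitk-recursion-coordinatewise} via the indicator $\ejjk$.

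Part (b) then follows cleanly from the separability in (a) together with the product structure of the feasible set $\gX = \prod_{j\in[n]} [0, 1/\|\ma_{:j}\|_2^2]$. Since $\xk = \arg\min_{\x\in\gX} \phitk(\x)$ and $\phitk = \sum_j \phitkj$, the minimization decomposes coordinate-wise, so $[\xk]_j = \arg\min_{x_j} \phitkj(x_j)$. For every $j \neq j_k$, the recursion~\cref{eq-phitk-recursion-coordinatewise} gives $\phitkj = \phitkmj$, hence $[\xk]_j = [\xkm]_j$, proving the one-coordinate update property.

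For (c), I would again use induction on $k$, and the key tool is the tower property together with the fact that $j_i$ is drawn uniformly from $[n]$ \emph{independently} of $\oyi$ (which is measurable with respect to the history $\{j_1,\dots,j_{i-1}\}$, since $\oyi$ is built from $\y_1,\dots,\y_{i-1}$ and the previous extrapolations). The base case $k=1$ is immediate from $\phi_1(\x) = \phi_0(\x) + a_1\inprod{\ma^\top\bar\y_0 - \1}{\x}$ since $\bar\y_0 = \y_0$ is deterministic given initialization. For the inductive step, conditioning on the history $\mathcal{F}_{k-1}$ that determines $\oykc$, the uniformity of $j_k$ gives
\begin{equation*}
\mathbb{E}\bigl[\,n a_k \inprod{\ma^\top \oykc - \1}{x_{j_k}\ejk}\,\bigm|\,\mathcal{F}_{k-1}\bigr]
= n a_k \cdot \tfrac{1}{n}\sum_{j\in[n]} x_j (\ma^\top \oykc - \1)_j
= a_k \inprod{\ma^\top \oykc - \1}{\x},
\end{equation*}
and taking outer expectation and combining with the inductive hypothesis applied to $\phitkm$ yields~\cref{eq-expectedPhitk}.

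The main technical point to be careful about is the measurability issue in (c): one must verify that $\oykc$ is $\mathcal{F}_{k-1}$-measurable (so that the conditional expectation pulls it out of the inner product) and that $j_k$ is independent of $\mathcal{F}_{k-1}$. Both follow from the algorithm's structure in \cref{alg:SI-NNLS-analysis}, where $\oykc$ is computed from $\ykm$ and $\y_{k-2}$ \emph{before} sampling $j_k$, and $j_k$ is sampled i.i.d.\ uniformly from $[n]$. Everything else amounts to linearity of expectation and bookkeeping.
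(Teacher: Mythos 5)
Your proposal is correct and follows essentially the same route as the paper: separability of the base cases plus the single-coordinate recursion for (a), coordinate-wise decomposition of the $\argmin$ over the product feasible set for (b), and induction with the tower rule and uniformity of $j_k$ for (c). Your explicit attention to the $\mathcal{F}_{k-1}$-measurability of $\oykc$ is a nice touch the paper leaves implicit, but it does not change the argument.
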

\begin{proof}
In the statement of \cref{enu:propSeparability}, the claim about separability of $\phi_0$ and $\phi_1$ can be checked just from the definitions of $\phi_{0, j}$ and $\phi_{1, j}$. We prove the claim of coordinate-wise separability for $k \geq 2$ by summing over $j\in [n]$ both sides of \cref{eq-phitk-recursion-coordinatewise}. We can compute this sum via following observation, which concludes the proof of \cref{enu:propSeparability}. 
$$\sum_{j \in [n]}  a_k \1_{j = j_k} \inprod{\mat \overline{\y}_{k-1} -\1}{x_{j_k} \ejk} =  a_k \inprod{\mat \overline{\y}_{k-1} - \1}{x_{j_k} \ejk}.$$  From \cref{enu:propSeparability}, we may therefore define, for $j\neq j_{k}$,  
\begin{align*}
\xki{j} & =\arg\min_{u\in \R_+} \phitkj(u)=\arg\min_{u\in \R_+}\phitkmj(u)=\xkmi{j}.
\end{align*}
Therefore, $\xki{j} = \xkmi{j}$ for all $j\neq j_{k},$ thus proving \cref{enu:propXUpdateOneCoordinate}. To prove \cref{enu:expectedPhitk}, we use induction on $k$. The base case holds for $k = 1$ by the definition of $\phi_1(\x)$. Let \cref{enu:expectedPhitk} hold for $k \geq 1$. Then,  by the definition of $\phitk$ as in \cref{eq:def-phitk}, we have
$$
\phitk(\x)=\phitmk(\x)+n\aik \inprod{\mat\oykc-\1}{x_{j_{k}}\ejk}, \text{ for all } k\geq 2.
$$ 
Let $\mathcal{F}_{k-1}$ be the natural filtration, containing all the randomness in the algorithm up to and including iteration $k-1.$ 
Taking expectations with respect to all the randomness until iteration $k$ and invoking linearity of expectation, the inductive hypothesis, and the tower rule $\mathbb{E}[{}\cdot{}] = \mathbb{E}[\mathbb{E}[{}\cdot{}|\mathcal{F}_{k-1}]]$, we have 
\begin{align*}
    \mathbb{E}[\phitk(\x)] &= \mathbb{E}[\phitmk(\x)] + n\aik \mathbb{E}\left[\left[ \mathbb{E} \inprod{\mat\oykc-\1}{x_{j_{k}}\ejk}|\mathcal{F}_{k-1}\right]\right] \\
    &= \mathbb{E}\left[\phio(\x)\right] + \sum_{i\in [k-1]} \ai \mathbb{E}\left[  \inprod{\mat \oyi - \1}{\x} 
    \right]    + \aik \mathbb{E}\inprod{\mat \oykc - \1}{\x} \\
    &= \mathbb{E}\left[\phio(\x)\right] + \sum_{i \in [k]} \ai \mathbb{E} \left[  \inprod{\mat \oyi - \1}{\x}   \right], 
\end{align*} which finishes the proof of \cref{enu:expectedPhitk}. 
\end{proof}

\begin{restatable}{proposition}{lemPhiktdiff}
\label[prop]{lem:Phikt-diff} For all $k \geq 2,$ the random function $\phitk: \mathcal{X}\rightarrow \R$, $k \geq 2,$ defined
in \cref{eq:def-phitk} satisfies the following inequality, where $\x_k$ and $\overline{\y}_k$ evolve according to \cref{alg:SI-NNLS-analysis}. 
\[ 
\phitk(\xk)-\phitkm(\xkm)\geq \aik \left( n\inprod{\mat\oykc-\1}{\xki{j_k} \ejk}\right)  +\frac{1}{2}\norm{\xk-\xkm}_{\mLambda}^{2}.
\] 
\end{restatable}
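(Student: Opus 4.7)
The plan is to combine the recursive definition of $\phitk$ in \cref{eq:def-phitk} with the strong-convexity inequality \cref{phitk-strongconvexity} satisfied by $\phitkm$ at its constrained minimizer $\xkm$. Note that the claimed inequality is deterministic (for each fixed realization of the sampled coordinate $j_k$), so no expectation needs to be taken, and the argument is essentially a one-line manipulation once the two ingredients are in place.

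First, I would evaluate the recursion $\phitk(\x) = \phitkm(\x) + n a_k \inprod{\mat \oykc - \1}{x_{j_k} \ejk}$ at $\x = \xk$ and then subtract $\phitkm(\xkm)$ from both sides, which yields the identity
\[
\phitk(\xk) - \phitkm(\xkm) \;=\; \bigl[\phitkm(\xk) - \phitkm(\xkm)\bigr] \;+\; n\aik \inprod{\mat \oykc - \1}{\xki{j_k} \ejk}.
\]
The right-most term matches verbatim the linear term in the statement of the proposition, so the remaining task is to lower-bound the bracketed difference by $\frac{1}{2}\|\xk - \xkm\|_{\mLambda}^2$.

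Second, I would invoke strong convexity. The function $\phio$ defined in \cref{def-phi-zero} is $1$-strongly convex with respect to $\|\cdot\|_{\mLambda}$, and every term added to $\phio$ in the recursion \cref{eq:def-phitk} is linear in $\x$, so $\phitkm$ inherits $1$-strong convexity in the same norm. Since $\xkm = \argmin_{\x \in \mathcal{X}} \phitkm(\x)$ and $\xk \in \mathcal{X}$, applying \cref{phitk-strongconvexity} to $\phitkm$ at the point $\xk$ gives
\[
\phitkm(\xk) - \phitkm(\xkm) \;\geq\; \tfrac{1}{2}\|\xk - \xkm\|_{\mLambda}^2,
\]
and substituting this back into the identity of the previous paragraph completes the proof.

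There is no substantive obstacle: the two steps above are both routine. The only small point to be careful about is justifying that the constrained strong-convexity bound \cref{phitk-strongconvexity} applies to $\phitkm$ (not just to $\phitk$). This is immediate once one observes that the additional terms in the recursion are linear in $\x$ and thus leave the strong-convexity constant unchanged, and that first-order optimality of the constrained minimizer $\xkm$ on $\mathcal{X}$ together with the quadratic lower bound from strong convexity yields the inequality in precisely the form needed.
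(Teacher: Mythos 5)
Your proof is correct and follows essentially the same route as the paper: evaluate the recursion \cref{eq:def-phitk} at $\x = \xk$ to isolate the linear term, then apply the strong-convexity/first-order-optimality bound \cref{phitk-strongconvexity} to $\phitkm$ at $\xkm$ and add the two relations. Your extra remark justifying that $\phitkm$ inherits $1$-strong convexity (since the added terms are linear) is a point the paper leaves implicit, but the argument is the same.
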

\begin{proof}
We have, using $\x = \xk$ in \cref{eq:def-phitk}, that 
\[
\phitk(\xk)-\phitkm(\xk)=n \aik \inprod{\mat\oykc-\1}{\xki{j_k} \ejk}.
\]
Applying \cref{phitk-strongconvexity} to $\phitkm$ at $\xk$ gives 
\[
\phitkm(\xk)-\phitkm(\xkm)\geq\frac{1}{2}\norm{\xk-\xkm}_{\mLambda}^{2}.
\]
Adding these inequalities completes the proof of the claim. 
\end{proof}

We now use the preceding technical results to bound the change in scaled gap. 

\lemGapEvolution* 
\begin{proof}
Using $\gk$ from \cref{def-gap-est}, $\uk$ from \cref{def-uk}, and $\lk$ from \cref{eq:def-Lk}, we have 
\begin{align*}
\ak\gk(\u,\v)  =\; &\psitk(\yk)-\phitk(\xk)+\phio(\u)\\
 & -\frac{\ak}{2}\norm{\v-\yk}_2^{2}-\frac{1}{2}\norm{\u-\xk}_{\mLambda}^{2}-\sum_{i\in[k]}  \ai \inprod{\ma\u}{\yi-\oyi} + \sum_{i\in[k]}\frac{\ai}{2}\|\yi\|_2^2.
\end{align*} 
Therefore, the difference in scaled gap between successive iterations
is 
\begin{equation}\label[eq]{eq:gap-change}
\begin{aligned}
\ak \gk(\u,\v) -\akm \gkm(\u,\v)  =\;&\left[\psitk(\yk)-\psitkm(\ykm)\right]-\left[\phitk(\xk)-\phitkm(\xkm)\right]\\
 & -\frac{\ak}{2}\norm{\v-\yk}_2^{2}+\frac{\akm}{2}\norm{\v-\ykm}_2^{2}\\
 & -\frac{1}{2}\norm{\u-\xk}_{\mLambda}^{2}+\frac{1}{2}\norm{\u-\xkm}_{\mLambda}^{2}\\
 & -  a_k \inprod{\ma\u}{\yk-\oykc}+ \frac{a_k}{2} \|\y_k\|_2^2.
\end{aligned}
\end{equation}
Based on the above expression, to prove the lemma, it suffices to bound the expectation of
\begin{align} \label[eq]{eq:Tk-def}
T_k(\vu) & \defeq \left[\psitk(\yk)-\psitkm(\ykm)\right]-\left[\phitk(\xk)-\phitkm(\xkm)\right] - a_k \inprod{\ma\u}{\yk-\oykc}+ \frac{a_k}{2} \|\y_k\|_2^2.
\end{align}
First, we take expectations on both sides of the inequality in \cref{lem:Phikt-diff} by invoking $\mathbb{E}[{}\cdot{}] = \mathbb{E}[\mathbb{E}[{}\cdot{}|\mathcal{F}_{k-1}]]$ as before, where $\gF_{k-1}$ denotes the natural filtration. By using the fact that $\xkm$ is updated only at coordinate $j_k$ (as stated in \cref{enu:propXUpdateOneCoordinate}), we observe the following for the term from the right hand side of \cref{lem:Phikt-diff}.
\begin{align*}
    \mathbb{E}\left[\inprod{\mat \overline{\y}_{k-1}-\1}{[\xk]_{j_k} \ejk}\right]&= \mathbb{E}\left[\inprod{\mat \overline{\y}_{k-1}-\1}{\xk - \xkm}\right] + \mathbb{E}\left[\mathbb{E}\left[\inprod{\mat \overline{\y}_{k-1} -\1}{[\xkm]_{j_k} \ejk}|\mathcal{F}_{k-1}\right]\right]\\
    &=  \mathbb{E}\left[\inprod{\mat \overline{\y}_{k-1}-\1}{\xk - \xkm}\right] + \frac{1}{n}\mathbb{E}\left[\inprod{\mat \overline{\y}_{k-1}-\1}{\xkm}\right] \\
    &= \mathbb{E}\left[\inprod{\mat \overline{\y}_{k-1} -\1}{\xk - \left(1 - 1/n\right)\xkm}\right].\numberthis\label[eq]{eq:trickyExpectation}
\end{align*}
Therefore, we have from \cref{lem:Phikt-diff} and scaling  \cref{eq:trickyExpectation} by $-n a_k$ that 
\begin{align*}
   -\mathbb{E}\left[\phitk(\xk)-\phitkm(\xkm)\right] \leq \;& -\frac{1}{2}\mathbb{E} \left[\|\xk-\xkm\|_\mLambda^2\right] \\
   &- \aik \mathbb{E}\left[\inprod{\mat \oykc-\1}{n\xk -(n-1) \xkm}\right] . \numberthis\label[eq]{eq:exp-phitkdiff} 
\end{align*}
We now bound the expectation of the term involving differences of $\psitk$ by taking expectations of both sides of \cref{lem:Psikt-diff}. 
\begin{equation}\label[eq]{eq:exp-psitkdiff}
\begin{aligned}
    \mathbb{E}\left[\psitk(\yk)-\psitkm(\ykm)\right] &\leq -\frac{A_{k-1}}{2}\E\left[\norm{\yk-\ykm}_2^{2}\right] -\frac{a_k}{2}\E\left[\norm{\yk}_2^{2}\right] \\
&\quad+ \aik \E \left[\inprod{\ma^\top \yk -\1}{n \xk - (n-1) \xkm}\right].  
\end{aligned}
\end{equation}
By taking expectations on both sides of \cref{eq:Tk-def}, we have 
\begin{align*}
    \mathbb{E}[T_k(\u)] &= \mathbb{E}\left[\psitk(\yk)-\psitkm(\ykm)\right] - \mathbb{E}\left[\phitk(\xk)-\phitkm(\xkm)\right]\\
    &\quad- a_k\mathbb{E}\left[\inprod{\ma\u}{\yk-\oykc}\right] + \frac{a_k}{2}\mathbb{E}\left[\|\y_k\|_2^2\right]. \numberthis\label[eq]{eq:expectedTk}
\end{align*} 
Combining \cref{eq:exp-phitkdiff}, \cref{eq:exp-psitkdiff}, and \cref{eq:expectedTk} then gives
\begin{equation}\label[eq]{eq:expectedTk-4}
\begin{aligned}
    \mathbb{E}[T_k(\u)] \leq\;&  - \frac{\akm}{2}  \mathbb{E}\left[\|\yk-\ykm\|_2^2\right] - \frac{1}{2}\mathbb{E}\left[\|\xk-\xkm\|_{\mLambda}^2\right]\\ 
    &+\aik \mathbb{E}\left[\inprod{\mat (\yk - \oykc)}{n\xk - (n-1)\xkm - \vu}\right].
\end{aligned} 
\end{equation}
Recall that by the assumption in the statement of the lemma, 
\[\overline{\y}_{k-1} = \ykm  + \frac{a_{k-1} }{a_k} (\ykm - \ykt)
.\]
Plugging into \cref{eq:expectedTk-4} and rearranging, we have
\begin{equation}\label[eq]{eq:final-Tk}
    \begin{aligned}
        \mathbb{E}[T_k(\u)] \leq\;&  - \frac{\akm}{2} \mathbb{E}\left[\|\yk-\ykm\|_2^2\right] - \frac{1}{2}\mathbb{E}\left[\|\xk-\xkm\|_{\mLambda}^2\right]\\ 
        &+ (n-1)a_k \E\left[\inprod{\ma^\top(\yk - \y_{k-1})}{\xk - \x_{k-1}}\right] - n a_{k-1} \E\left[\inprod{\ma^\top(\y_{k-1} - \y_{k-2})}{\xk - \x_{k-1}}\right]\\
        &+ a_k \E\left[\inprod{\ma^\top(\yk - \y_{k-1})}{\xk - \u}\right] - a_{k-1}\E\left[\inprod{\ma^\top(\y_{k-1} - \y_{k-2})}{\xkm - \u}\right]. 
    \end{aligned}
\end{equation}
To complete the proof, we need to bound the terms from the first two lines on the right-hand side of \cref{eq:final-Tk}. First, observe that, by the coordinate update of $\xk$ and Young's inequality, $\forall \beta > 0,$
\begin{align}
    \inprod{\ma^\top(\yk - \y_{k-1})}{\xk - \x_{k-1}} =\; & \inprod{\yk - \y_{k-1}}{\ma(\xk - \x_{k-1})}\notag\\
    =\; & \inprod{\yk - \y_{k-1}}{\ma_{:j_k}([\xk]_{j_k} - [\x_{k-1}]_{j_k})}\notag\\
    \leq \;& \frac{\beta}{2}\|\yk - \y_{k-1}\|_2^2 + \frac{1}{2\beta}\|\ma_{:j_k}\|_2^2|[\xk]_{j_k} - [\x_{k-1}]_{j_k}|^2\notag\\
    =\; & \frac{\beta}{2}\|\yk - \y_{k-1}\|_2^2 + \frac{1}{2\beta}\|\xk - \x_{k-1}\|_{\mLambda}^2. \label[eq]{innp-1}
\end{align}
By the same token, $\forall \gamma > 0,$
\begin{align}
    -\inprod{\ma^\top(\y_{k-1} - \y_{k-2})}{\xk - \x_{k-1}} \leq \frac{\gamma}{2}\|\y_{k-1} - \y_{k-2}\|_2^2 + \frac{1}{2\gamma} \|\xk - \x_{k-1}\|_{\mLambda}^2. \label[eq]{innp-2}
\end{align}
Recalling that, by the choice of step sizes, $(n-1)a_k \leq n a_{k-1}$ and $a_{k}\leq \frac{\sqrt{A_{k-1}}}{2n},$ we can verify that for $\beta = 2(n-1)a_k$ and $\gamma = 2n a_{k-1},$ the following inequalities hold:
\begin{equation}\label[ineq]{eq:young-choices}
    \begin{aligned}
        (n-1)a_k \beta - A_{k-1} &\leq - \frac{A_{k-1}}{2},\\
        \frac{(n-1)a_k}{\beta} + \frac{n a_{k-1}}{\gamma} &\leq 1.
    \end{aligned}
\end{equation}
Combining Equations~\eqref{eq:final-Tk}--\eqref{eq:young-choices}, 
\begin{equation}\label[eq]{eq:final-final-Tk}
    \begin{aligned}
        \mathbb{E}[T_k(\u)] \leq\;&  - \frac{\akm}{4}  \mathbb{E}[\|\yk-\ykm\|_2^2] + n^2 {a_{k-1}}^2 \mathbb{E}[\|\ykm-\y_{k-2}\|_2^2] \\ 
        &+ a_k \E\left[\inprod{\ma^\top(\yk - \y_{k-1})}{\xk - \u}\right] - a_{k-1}\E\left[\inprod{\ma^\top(\y_{k-1} - \y_{k-2})}{\xkm - \u}\right]. 
    \end{aligned}
\end{equation}
It remains to combine \cref{eq:gap-change}, \cref{eq:Tk-def}, and \cref{eq:final-final-Tk}. 
\end{proof}

\lemAoneGoneBound* 
\begin{proof}
Evaluating \cref{def-uk} and \cref{eq:def-Lk} at $k = 1$ gives
\begin{align*}
A_{1}U_{1}(\v)&=\psit_1(\y_1)-\frac{A_1}{2}\norm{\v-\y_1}_2^{2},\numberthis\label[eq]{eq:a1ut}\\
A_{1}L_{1}(\u)&=\phit_1(\x_1)+\frac{1}{2}\norm{\u-\x_1}_{\mLambda}^{2}-\phio(\u)+  a_1 \inprod{\ma\u}{\y_1-\overline{\y}_0} - \frac{a_1}{2}\norm{\y_{1}}_2^{2}.\numberthis\label[eq]{eq:a1l1}
\end{align*}
 By definition of $\psit_{1}$ from \cref{eq:def-psit},  $\phit_{1}$ from \cref{eq:phi-1-def}, and the assignment $a_1^1 = a_1$, we have 
\begin{align*}
\psit_{1}(\y_{1}) - \phit_{1}(\x_{1}) & =-\frac{a_1}{2}\norm{\y_{1}}_2^{2} + a_{1}\inprod{\y_{1}}{\ma\x_{1}} - a_1\1^{\top}\x_{1} 
- \phio(\x_{1}) -  a_1\inprod{\mat\overline{\y}_{0}-\1}{\x_{1}}\\
&= -\frac{a_1}{2}\norm{\y_{1}}_2^{2} + a_1 \inprod{\ma(\y_1 - \y_0)}{\x_1} - \phio(\x_1), 
\end{align*} 
where we have used that $\bar{\y}_0 = \y_0,$ which holds by assumption. To complete the proof, it remains to subtract \cref{eq:a1l1} from \cref{eq:a1ut} and combine with the last equality.
\end{proof}

\thmFinalAKGKBound* 
\begin{proof}
Observe that, by the choice of step sizes, $n^2 a_{k-1}^2 \leq\frac{A_{k-2}}{4},$ $\forall k \geq 3.$ Thus, telescoping the bound in \cref{lem:gapEvolution} and combining with \cref{lem:AoneGoneBound}, we have
\begin{equation}\label[eq]{eq:telescoped-gap-bnd}
\begin{aligned}
    \E[A_{\ktotal} \mathrm{G}_{\ktotal}(\vu, \vv)]
    \leq & \phio(\vu) - \frac{A_{\ktotal}}{2}\E[\|\v - \y_{\ktotal}\|_2^2] - \frac{1}{2}\E[\|\u - \x_{\ktotal}\|_{\mLambda}^2]-a_{\ktotal}\E[\inprod{\ma(\u - \x_{\ktotal})}{\y_{\ktotal} - \y_{\ktotal-1}}]\\
    &- \frac{A_{\ktotal - 1}}{4}\E[\|\y_{\ktotal} - \y_{\ktotal - 1}\|_2^2] + n^2{a_1}^2\E[\|\y_1 - \y_0\|_2^2] - \E[\phio(\x_1)]. 
\end{aligned}
\end{equation}
We first show how to cancel out the inner product term with the negative quadratic terms. Observe that, $\forall \beta > 0,$
\begin{align}
    -a_{\ktotal}\inprod{\ma(\u - \x_{\ktotal})}{\y_{\ktotal} - \y_{\ktotal-1}}
    =& -a_{\ktotal} \sum_{j=1}^n (\y_{\ktotal} - \y_{\ktotal-1})^{\top}\ma_{:j}(u_j - [\x_{\ktotal}]_j)\notag\\
    \leq & a_{\ktotal} \Big(\frac{n\beta}{2}\|\y_{\ktotal} - \y_{\ktotal-1}\|_2^2 + \frac{1}{2\beta}\|\u - \x_{\ktotal}\|_{\mLambda}^2\Big), \notag
\end{align}
where the last line is by Young's inequality. In particular, choosing $\beta = 2a_{\ktotal},$ we have $\frac{1}{2}a_{\ktotal}n\beta = n a_{\ktotal}^2$, which is at most $\frac{A_{\ktotal - 1}}{4},$ by the choice of step sizes in SI-NNLS+. Thus, since  $\phio(\x_1) = \frac{1}{2}\|\x_1 - \x_0\|_{\mLambda}^2,$  \cref{eq:telescoped-gap-bnd} simplifies to
\begin{equation}\label[eq]{eq:telescoped-gap-bnd-2}
\begin{aligned}
    \E[A_{\ktotal} \mathrm{G}_{\ktotal}(\vu, \vv)]
    \leq\; & \phio(\vu) 
     - \frac{A_{\ktotal}}{2}\E[\|\v - \y_{\ktotal}\|_2^2] - \frac{1}{4}\E[\|\u - \x_{\ktotal}\|_{\mLambda}^2] + n^2{a_1}^2\E[\|\y_1 - \y_0\|_2^2] - \E[\phio(\x_1)].
\end{aligned}
\end{equation}
Since $- \frac{1}{4}\E[\|\u - \x_{\ktotal}\|_{\mLambda}^2] \leq 0,$ we can ignore it. Let us now bound $n^2{a_1}^2\|\y_1 - \y_0\|_2^2 - \phio(\x_1).$ By definition of $\x_1$ and $\phi_1,$ we have $\x_1 = \x_0 - a_1 \mLambda^{-1}(\ma^\top \y_0 - \1).$ Further, from \cref{eq:def-psit} and \cref{eq:meta-algo}, as we have  $\y_1 = \ma \x_1$ and $\y_0 = \ma \x_0,$ we can simplify the terms to bound as follows. 
\begin{align*}
     n^2 a_1^2\|\y_1 - \y_0\|_2^2 - \phio(\x_1)
    &= {{a_1}^2} \Big(n^2{a_1}^2\|\ma\mLambda^{-1}(\ma^\top \y_0 - \1)\|_2^2 - \frac{1}{2}\|\mLambda^{-1}(\ma^\top \y_0 - \1)\|_{\mLambda}^2 \Big)\\
    &\leq {{a_1}^2} \Big(n^2{a_1}^2\|\mLambda^{-1/2}\ma^{\top}\ma\mLambda^{-1/2}\|_2\|\mLambda^{-\frac{1}{2}}(\ma^\top \y_0 - \1)\|_2^2 - \frac{1}{2}\|\mLambda^{-\frac{1}{2}}(\ma^\top \y_0 - \1)\|_2^2 \Big)\\
    &\leq {{a_1}^2}\|\mLambda^{-\frac{1}{2}}(\ma^\top \y_0 - \1)\|_2^2\Big(n^3{a_1}^2 - \frac{1}{2}\Big),
\end{align*}
where the reasoning behind the first inequality follows from the definition of spectral norm and that $\|\ma \mLambda^{-1/2}\|^2_2 = \lambda_{\textrm{max}}(\mLambda^{-1/2} \ma^\top \ma \mLambda^{-1/2})$; the last inequality follows as the matrix $\mLambda^{-1/2}\ma^{\top}\ma \mLambda^{-1/2}$ has all ones on the main diagonal, and thus its trace is at most $n,$ and since it is positive semidefinite, its spectral norm is at most its trace. As $a_1 = \frac{1}{\sqrt{2}n^{1.5}},$ we conclude that $n^2{a_1}^2\|\y_1 - \y_0\|_2^2 - \phio(\x_1) \leq 0.$ Thus, \cref{eq:telescoped-gap-bnd-2} simplifies to
\begin{equation}\label[eq]{eq:telescoped-gap-bnd-3}
\begin{aligned}
    \E[A_{\ktotal} \mathrm{G}_{\ktotal}(\vu, \vv)]
    \leq \E[\phio(\vu)] 
     - \frac{A_{\ktotal}}{2}\E[\|\v - \y_{\ktotal}\|_2^2].
\end{aligned}
\end{equation}
By construction, $\gapa^{\u, \v}(\xtktotal, \ytktotal)\leq \mathrm{G}_{\ktotal}(\vu, \vv)$. 
Further, using \cref{eq:gap-to-opt-gap}, for $\u = \vxs$, $\v = \ma \xtktotal,$ we have that $f(\xtktotal) - f(\vxs) \leq \gapa^{\u, \v}(\xtktotal, \ytktotal).$ Hence, 
we can conclude from \cref{eq:telescoped-gap-bnd-3} that
\begin{equation}\label[eq]{eq:opt-gap-bnd}
    \E[f(\xtktotal) - f(\vxs)] \leq \frac{\phi_0(\vxs)}{A_{\ktotal}} = \frac{\|\x_0 - \vxs\|_{\mLambda}^2}{2A_{\ktotal}}. 
\end{equation}
On the other hand, for $\u = \vxs$ and $\v = \y^{\star} = \ma\vxs$, $\gapa^{\u, \v}(\xtktotal, \ytktotal) \geq 0,$ and, recalling from \cref{eq:meta-algo}, \cref{eq:def-xktilde}, and \cref{eq:def-psit} that   $\y_{\ktotal} = \ma\xtktotal,$ we can also conclude from \cref{eq:telescoped-gap-bnd-3} that
\begin{equation}\label[eq]{eq:dist-bnd}
    \E\Big[\frac{1}{2}\|\ma(\xtktotal - \vxs)\|_2^2\Big] \leq \frac{\phi_0(\vxs)}{A_{\ktotal}} = \frac{\|\x_0 - \vxs\|_{\mLambda}^2}{2A_{\ktotal}}. 
\end{equation}
By \cref{enu:optval}, $f(\vxs) = -\frac{1}{2}\1^{\top}\vxs = - \frac{1}{2}\|\ma \vxs\|_2^2$. Using this identity, one can verify that, $\forall \x,$
\begin{align*}
    f(\x) - f(\vxs) + \frac{1}{2}\|\ma(\x - \vxs)\|_2^2 &= \inprod{\ma^T\ma \x - \1}{\x - \vxs}\\
    &= \inprod{\nabla f(\x)}{\x - \vxs}.
\end{align*}
Hence, summing \cref{eq:opt-gap-bnd} and \cref{eq:dist-bnd}, we also have
\begin{equation}\notag
    \E[\inprod{\nabla f(\xtktotal)}{\xtktotal - \vxs}] \leq \frac{2\phi_0(\vxs)}{A_{\ktotal}} = \frac{\|\x_0 - \vxs\|_{\mLambda}^2}{A_{\ktotal}}.
\end{equation}
Finally, the bound on the rate of growth of $A_k$ is provided in Appendix~\ref{sec:akgrowth}.
\end{proof}

\subsection{Omitted Proofs from \cref{sec:alg-ana}: Growth of Scalar Sequences}\label[sec]{sec:akgrowth}

In this section, we use the properties of $\{a_i\}$ and $\{\aiuk\}$ to obtain our claimed rate of growth of $A_k$. Note that in any iteration $k \geq 2$ of \cref{alg:SI-NNLS-analysis}, there are two possible updates to $a_k$, which we name as follows. 
\begin{align*} 
\text{Type I update: }& a_{k+1}=\frac{n a_k}{n-1}\numberthis\label{type1update}\\
\text{Type II update: } &a_{k+1}=\frac{\sqrt{A_k}}{2n}\numberthis\label{type2update} \end{align*}
Obtaining a handle on the growth rate of $A_k$ requires controlling the number of updates of both types specified above. At a high level, the idea behind obtaining such a bound is that if the algorithm had only Type II updates,  we would have $A_k\ge \Omega(\frac{k^2}{n^2})$; we then go on to show that we cannot have more than $\frac{5}{2}n \log n$ Type I updates since those make $a_k$ grow too fast. We formalize this intuition in the following lemmas. 

\begin{restatable}{lemma}{lemakpgeqak}\label[lem]{lem:ak+1>ak} In \cref{alg:SI-NNLS-analysis}, we have, for $k \geq 2$, that $a_{k+1}\geq a_{k}$ and $A_{k+1}>A_k$.
\end{restatable}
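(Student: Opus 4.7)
The plan is to dispatch the two assertions in turn, the first being essentially immediate and the second requiring a short induction. For $A_{k+1}>A_k$, I would first argue by an easy induction on the update rule that $a_j>0$ for every $j\ge 1$: the base cases $a_1,a_2>0$ are immediate from the initialization, and the inductive step follows because $a_{k+1}$ is defined as the minimum of two strictly positive quantities whenever $a_k>0$ and $A_k>0$. Once positivity is in hand, $A_{k+1}=A_k+a_{k+1}>A_k$ is immediate. So the real content is the monotonicity $a_{k+1}\ge a_k$ for $k\ge 2$.

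For this, I would split on which branch of the minimum realizes $a_{k+1}$. If $a_{k+1}=\frac{n a_k}{n-1}$ (a Type I update as in \eqref{type1update}), then $a_{k+1}\ge a_k$ is trivial since $\tfrac{n}{n-1}>1$. If instead $a_{k+1}=\frac{\sqrt{A_k}}{2n}$ (a Type II update as in \eqref{type2update}), then squaring shows that the desired inequality $a_{k+1}\ge a_k$ is equivalent to the bound $A_k\ge 4n^2 a_k^2$. Thus the whole proof reduces to establishing $A_k\ge 4n^2 a_k^2$ for every $k\ge 2$.

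To see this bound, I would induct on $k$. For $k\ge 3$, the recursion $a_k=\min\{\tfrac{n a_{k-1}}{n-1},\tfrac{\sqrt{A_{k-1}}}{2n}\}$ already forces $a_k\le \tfrac{\sqrt{A_{k-1}}}{2n}$, which rearranges to $A_{k-1}\ge 4n^2 a_k^2$; combined with $a_k>0$ this gives $A_k=A_{k-1}+a_k\ge 4n^2 a_k^2$, as desired. The only place that needs a direct computation is the base case $k=2$, where the explicit initialization $a_1=\tfrac{1}{\sqrt{2}n^{1.5}}$ and $a_2=\tfrac{a_1}{n-1}$ yield $A_2=a_1+a_2=n a_2$; the inequality $A_2\ge 4n^2 a_2^2$ then reduces to a scalar inequality of the form $(n-1)\sqrt{n}\ge c$ for an absolute constant $c$, which holds under the standing hypothesis $n\ge 4$ (the regime in which \cref{thm:FinalAKGKBound} is stated).

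The main (and only) obstacle is this base case: without a previous minimum to invoke, one must open up the initialization and verify the inequality arithmetically. This is where the condition $n\ge 4$ gets used; the inductive step itself is essentially a tautology from the definition of $a_{k+1}$ as a minimum, and no further properties of $\mathbf{A}$ or of the primal/dual iterates are invoked.
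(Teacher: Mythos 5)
Your proof is correct and is essentially the paper's argument run forwards instead of by contradiction: the paper assumes $a_{k+1}<a_k$ in the Type~II branch, derives $A_k<4n^2a_k^2$, and solves a quadratic to contradict $a_k\le\frac{\sqrt{A_{k-1}}}{2n}$, whereas you use that same bound directly to get $A_k>A_{k-1}\ge 4n^2a_k^2$, which is cleaner. One small point in your favor: the paper invokes $a_k=\min\{\frac{na_{k-1}}{n-1},\frac{\sqrt{A_{k-1}}}{2n}\}$ uniformly, which does not literally apply at $k=2$ (where $a_2=\frac{a_1}{n-1}$ is set by the initialization), so your explicit verification of the base case $A_2\ge 4n^2a_2^2$ under $n\ge 4$ closes a step the paper leaves implicit.
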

\begin{proof}
Notice that for all $k$, we have $a_k>0$, which implies that $A_{k+1}\defeq A_k+a_{k+1}$ satisfies $A_{k+1}> A_k$. To check the non-decreasing nature of $a_k$, we recall that $a_{k+1} = \min \left( \frac{na_{k}}{n-1} , \frac{\sqrt{A_{k}}}{2n}\right)$. In the case that $\frac{\sqrt{A_k}}{2n}\ge \frac{n a_{k}}{n-1}$, we have $a_{k+1}=\frac{n a_{k}}{n-1}>a_k$, as claimed. Consider the other case with $a_{k+1} =\frac{\sqrt{A_k}}{2n} $, and suppose, for the sake of contradiction, that $a_{k+1} < a_k$. Chaining this inequality with the assumed expression for $a_{k+1}$, scaling appropriately, and squaring both sides gives   
$ A_k<4 n^2a_k^2$.  Plugging this into $A_k = A_{k-1} + a_k$ and solving for $a_k$ from this quadratic inequality (and further invoking the nonnegativity of $a_k$), yields  $a_k>\frac{1+\sqrt{1+16n^2A_{k-1}}}{8n^2}> \frac{\sqrt{A_{k-1}}}{2n}$.  However, this contradicts  $a_k=\min\left(\frac{na_{k-1}}{n-1}, \frac{\sqrt{A_{k-1}}}{2n}\right)\leq \frac{\sqrt{A_{k-1}}}{2n}$.
\end{proof}

\begin{restatable}{lemma}{lemtypetwoupdate}\label[lem]{lem:tyep2update}
Consider the iterations $\{s_k\}$ in which \cref{alg:SI-NNLS-analysis} performs a Type II update $a_{s_k+1}=\frac{\sqrt{A_{s_k}}}{2n}$. Then we have $A_{s_k}\ge \frac{k^{2}}{c_1 n^2}$ and $a_{s_k}\ge \frac{k-1}{2\sqrt{c_1}n^2}$ for $c_1 = 36$.
\end{restatable}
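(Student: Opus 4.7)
The plan is to prove both bounds simultaneously by induction on $k$, leveraging the monotonicity of $\{a_j\}_{j\geq 2}$ from \cref{lem:ak+1>ak} as the key structural ingredient. The underlying idea is that a Type II update assigns $a_{s_k+1} = \sqrt{A_{s_k}}/(2n)$, which, combined with monotonicity of the $a_j$ sequence thereafter, forces each subsequent $A_{s_{k+1}}$ to pick up an additional amount of order $k/n^2$; this discrete recursion is precisely what yields the quadratic-in-$k$ growth of $A_{s_k}$.

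For the inductive step, I would assume $A_{s_k} \geq k^2/(c_1 n^2)$, so the Type II formula gives $a_{s_k+1} = \sqrt{A_{s_k}}/(2n) \geq k/(2\sqrt{c_1}\, n^2)$. Since $s_{k+1} \geq s_k+1 \geq 2$, \cref{lem:ak+1>ak} yields $a_{s_{k+1}} \geq a_{s_k+1}$, and hence
\[
A_{s_{k+1}} \;\geq\; A_{s_k} + a_{s_{k+1}} \;\geq\; \frac{k^2}{c_1 n^2} + \frac{k}{2\sqrt{c_1}\, n^2}.
\]
Demanding that the right-hand side dominate $(k+1)^2/(c_1 n^2)$ reduces, after subtracting $k^2/(c_1 n^2)$, to $\sqrt{c_1}\, k \geq 2(2k+1)$. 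Since $2k+1 \leq 3k$ for $k \geq 1$, it suffices that $\sqrt{c_1} \geq 6$, so $c_1 = 36$ is the natural (and smallest integer) choice that makes the induction close.

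For the base case $k=1$, I would use monotonicity of the partial sums to get $A_{s_1} \geq A_1 = a_1 = 1/(\sqrt{2}\, n^{1.5})$, which exceeds $1/(36 n^2)$ for every $n \geq 1$; the companion claim $a_{s_1} \geq 0$ is vacuous. The bound $a_{s_k} \geq (k-1)/(2\sqrt{c_1}\, n^2)$ for $k \geq 2$ then follows immediately from monotonicity: $a_{s_k} \geq a_{s_{k-1}+1} = \sqrt{A_{s_{k-1}}}/(2n) \geq (k-1)/(2\sqrt{c_1}\, n^2)$ using the already-established lower bound on $A_{s_{k-1}}$.

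I do not foresee a substantive obstacle here; the argument is essentially a bookkeeping induction. The only points requiring care are confirming that \cref{lem:ak+1>ak} applies throughout (it is stated for indices $\geq 2$, and because the algorithm's update formula first produces $a_3$, we have $s_k+1 \geq 3$, placing $a_{s_k+1}$ squarely in the monotone regime), and tuning $c_1$ so that the linear surplus $k/(2\sqrt{c_1}\, n^2)$ dominates the quadratic increment $(2k+1)/(c_1 n^2)$ in the induction step—from which $c_1 = 36$ emerges as the tight choice.
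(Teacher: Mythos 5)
Your proof is correct and follows essentially the same induction as the paper's, with only cosmetic differences: you anchor the base case at $A_1$ rather than $A_2$, and you justify the step $a_{s_{k+1}} \geq \sqrt{A_{s_k}}/(2n)$ via monotonicity from \cref{lem:ak+1>ak} (arguably more carefully than the paper, which writes $a_{s_{k+1}}=\sqrt{A_{s_{k+1}-1}}/(2n)$ as an equality). The arithmetic closing the induction with $c_1=36$ matches the paper's.
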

\begin{proof}
We prove this claim by induction. First, notice that $s_k \geq k+1$ for any $k$. Recall our initialization $a_1 = A_1 = \frac{1}{\sqrt{2}n^{1.5}}$. By combining this with the monotonicity property stated in \cref{lem:ak+1>ak}, we have $a_{s_1} \geq a_2 = \frac{1}{\sqrt{2}n^{2.5}} \geq 0$. By using \cref{lem:ak+1>ak} again, we have, in a similar fashion, that $A_{s_1} \geq A_2 = \frac{1}{\sqrt{2} n^{2.5}}+\frac{1}{\sqrt{2}n^{1.5}} \geq \frac{1}{c_1 n^2}$,  which proves the base case for induction. Assume that for some $k > 1$, we have the induction hypothesis $A_{s_k} \geq \frac{k^2}{c_1 n^2}$ and $a_{s_k} \geq \frac{k-1}{2\sqrt{c_1}n^2}$. Then, combining the monotonicity of $A_k$ from \cref{lem:ak+1>ak} with the fact that the algorithm performs a Type II update on $a_{s_k}$, we have  $a_{s_{k+1}}=\frac{\sqrt{A_{s_{k+1}-1}}}{2n}\ge \frac{\sqrt{A_{s_k}}}{2n}\ge \frac{k}{2\sqrt{c_1}n^2}$. By again applying monotonicity of $A_k$ and the induction hypothesis  about $a_k$, we have $A_{s_{k+1}}=A_{s_{k+1}-1}+a_{s_{k+1}}\ge A_{s_{k}}+a_{s_{k+1}}\ge \frac{2k^2+\sqrt{c_1}k}{2c_1 n^2}>\frac{(k+1)^2}{c_1 n^2}$.
\end{proof}

\begin{restatable}{lemma}{lemtypeoneupdate}\label[lem]{lem:tyep1updateNumber}  If at some $k_0^{\mathrm{th}}$ iteration of \cref{alg:SI-NNLS-analysis}, we have that 
\begin{align}\label[ineq]{eq:k_0condition}
    a_{k_0}>\frac{n-1}{2\sqrt{c_1}n^2}; \quad A_{k_0}\ge \frac{1}{c_1}
\end{align} 
then for all $k\ge k_0$, we have that 
\begin{align}
    a_k\ge \frac{k-1-k_0+n}{2 \sqrt{c_1} n^2};\quad A_k\ge \frac{(k-k_0+n)^2}{c_1 n^2}
\end{align} 
for $c_1 = 36$.
\end{restatable}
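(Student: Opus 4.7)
The plan is to proceed by induction on $k \geq k_0$, treating the two types of updates in \cref{type1update,type2update} separately at each step. The base case $k=k_0$ is immediate: substituting $k=k_0$ into the target inequalities reduces them to exactly the hypotheses $a_{k_0}\geq \frac{n-1}{2\sqrt{c_1}n^2}$ and $A_{k_0}\geq \frac{1}{c_1}=\frac{n^2}{c_1 n^2}$, so nothing to prove there.

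For the inductive step, assume the bounds hold at step $k\geq k_0$ and split on which branch defines $a_{k+1} = \min\!\bigl(\tfrac{na_k}{n-1},\,\tfrac{\sqrt{A_k}}{2n}\bigr)$. Writing $m = k-k_0+n$ for brevity, I first bound $a_{k+1}$. In a Type I step, $a_{k+1} = \tfrac{n}{n-1}a_k \geq \tfrac{n}{n-1}\cdot\tfrac{m-1}{2\sqrt{c_1}n^2}$, and the desired bound $a_{k+1}\geq \tfrac{m}{2\sqrt{c_1}n^2}$ reduces to the elementary inequality $n(m-1)\geq (n-1)m$, i.e., $m\geq n$, which holds because $k\geq k_0$. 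In a Type II step, the inductive bound on $A_k$ directly yields $a_{k+1} = \tfrac{\sqrt{A_k}}{2n} \geq \tfrac{1}{2n}\cdot\tfrac{m}{\sqrt{c_1}\,n} = \tfrac{m}{2\sqrt{c_1}n^2}$, as needed.

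Next, using $A_{k+1} = A_k + a_{k+1}$, the inductive bound, and the bound on $a_{k+1}$ just obtained,
\[
A_{k+1} \geq \frac{m^2}{c_1 n^2} + \frac{m}{2\sqrt{c_1}\,n^2}.
\]
Since $(m+1)^2 - m^2 = 2m+1$, the target $A_{k+1}\geq \tfrac{(m+1)^2}{c_1 n^2}$ reduces to
\[
\frac{m}{2\sqrt{c_1}\,n^2} \;\geq\; \frac{2m+1}{c_1 n^2}
\quad\Longleftrightarrow\quad
\Bigl(\tfrac{\sqrt{c_1}}{2}-2\Bigr)m \;\geq\; 1.
\]
This is the one place where the constant $c_1=36$ is used: it gives $\sqrt{c_1}/2 - 2 = 1$, so the inequality becomes $m\geq 1$, which holds because $m = k-k_0+n\geq n\geq 1$.

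The only real subtlety is recognizing that the Type II branch is the binding one for controlling $A_{k+1}$, and choosing $c_1$ to make the quadratic-vs-linear trade-off $m\cdot\tfrac{\sqrt{c_1}}{2}\geq 2m+1$ tight; everything else is routine algebra. The choice $c_1=36$ is consistent with the base-case constant derived in \cref{lem:tyep2update}, so the two growth regimes (before and after a warm-up phase) chain together seamlessly when this lemma is eventually combined with the bound on the number of Type I updates.
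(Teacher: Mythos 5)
Your proof is correct and follows essentially the same route as the paper's: induction on $k$, with the two update types handled separately and the same elementary reductions ($n(m-1)\geq (n-1)m$ for Type I, and $\tfrac{\sqrt{c_1}}{2}m \geq 2m+1$ for the growth of $A_{k+1}$). If anything, you are slightly more explicit than the paper about why $c_1=36$ suffices in the final step, which the paper asserts without spelling out.
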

\begin{proof}
We prove the claim by induction. First, the base case is true for $k = k_0$ by our assumption on $a_{k_0}$ and $A_{k_0}$. Assume the induction hypothesis $  a_k\ge \frac{k-1-k_0+n}{2 \sqrt{c_1} n^2}$  and $A_k\ge \frac{(k-k_0+n)^2}{c_1 n^2}$ for $k \geq k_0$. We now discuss how $a_k$ changes with the two types of  updates. 

If the algorithm performs a Type I update on $a_k$, then, by definition,  $a_{k+1}=\frac{n a_k}{n-1}$. Now applying the assumed lower bound on $a_k$, we have,   when $k>k_0$, that

$$a_{k+1} = \frac{na_k}{n-1} \ge \frac{k-1-k_0+n}{2\sqrt{c_1} n(n-1)} \ge \frac{k-k_0+n}{2 \sqrt{c_1} n^2}.$$
   Similarly, given that $A_{k}\ge \frac{(k-k_0+n)^2}{c_1 n^2}$, we have,  
    $$A_{k+1} = A_k + a_{k+1} \ge \frac{(k-k_0+n)^2}{c_1 n^2} + \frac{k-k_0+n}{2 \sqrt{c_1} n^2} \ge \frac{(k+1-k_0+n)^2}{c_1 n^2}.$$ If, on the other hand, the algorithm performs a Type II update on $a_k$, then we have $$a_{k+1}=\frac{\sqrt{A_k}}{2n}\ge \frac{k-k_0+n}{2\sqrt{c_1}n^2}.$$
    This completes the induction. 
\end{proof}

As we saw in \cref{lem:tyep1updateNumber}, after the $k_0^{\mathrm{th}}$ iteration - starting at which \cref{eq:k_0condition} holds - $A_k$ grows fast. We therefore need to estimate the number  of Type I updates \emph{before} the $k_0^{\mathrm{th}}$ iteration.

\begin{restatable}{lemma}{lemnumtypeoneupdatesbeforekzero}\label[lem]{lem:numTypeIUpdatesBeforek0}  There are at most $\frac{3}{2}n\log n$ Type I updates (\cref{type1update})  performed   before  the $k_0^{\mathrm{th}}$ iteration (the first iteration at which \cref{eq:k_0condition} holds). 
\end{restatable}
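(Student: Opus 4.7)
The plan is to argue by contradiction. I would assume that more than $\frac{3}{2}n\log n$ Type I updates have occurred at iterations strictly before $k_0$ and show that both conditions in \cref{eq:k_0condition} must already hold at some earlier iteration, contradicting the minimality of $k_0$.

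The central mechanism is the geometric growth on $\{a_k\}$ induced by Type I updates. Each Type I update multiplies $a_{k+1}$ by $n/(n-1)$, while each Type II update satisfies $a_{k+1}\geq a_k$ by the monotonicity in \cref{lem:ak+1>ak}. Starting from $a_2=a_1/(n-1)$, these two facts give $a_k\geq a_2\cdot(n/(n-1))^t$ after $t$ Type I updates. For the partial sum $A_k$, I would identify the worst-case ordering (deferring all Type I updates to the tail of the sequence) and apply a geometric-sum telescoping, yielding $A_k\geq a_1\cdot(n/(n-1))^{t+1}$; any other ordering only adds nonnegative terms to $A_k$ by monotonicity, so this lower bound holds in general.

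I would then apply the elementary inequality $\log(n/(n-1))\geq 1/n$ (which follows from $\log(1+x)\geq x/(1+x)$ with $x=1/(n-1)$) to obtain $(n/(n-1))^t\geq e^{t/n}$. Plugging in $t=\frac{3}{2}n\log n$ gives $(n/(n-1))^t\geq n^{3/2}$; combined with $a_1=1/(\sqrt{2}\,n^{1.5})$ this produces $A_k\geq 1/\sqrt{2}>1/36=1/c_1$ and $a_k\geq 1/(\sqrt{2}\,(n-1))>(n-1)/(12n^2)=(n-1)/(2\sqrt{c_1}\,n^2)$, the latter reducing to $12n^2>\sqrt{2}(n-1)^2$, which holds for all $n\geq 1$. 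Thus both conditions of \cref{eq:k_0condition} are satisfied at some iteration $k<k_0$, contradicting the choice of $k_0$ as the first such iteration.

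The main technical obstacle will be rigorously formalizing the $A_k$ lower bound when Type I and Type II updates are interleaved. This reduces to verifying that, among all orderings of a fixed number $t$ of Type I updates, deferring them to the tail of the sequence minimizes $A_k$: monotonicity from \cref{lem:ak+1>ak} guarantees that each intermediate Type II iteration contributes a nonnegative term relative to the all-Type-I geometric baseline, and a short geometric-sum identity then yields the bound $A_{t+2}\geq a_1(n/(n-1))^{t+1}$ in the extremal configuration. Once this monotonicity-based case analysis is in place, the remaining calculation is purely arithmetic.
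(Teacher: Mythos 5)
Your proposal is correct and follows essentially the same route as the paper's proof: both rely on the monotonicity of $a_k$ from \cref{lem:ak+1>ak}, the geometric factor $\tfrac{n}{n-1}$ accrued by each Type I update to lower-bound $a_{k_0}$ and (via the geometric sum) $A_{k_0}$, and the estimate $\log\tfrac{n}{n-1}\geq \tfrac1n$ to conclude that $\tfrac32 n\log n$ Type I updates already force \cref{eq:k_0condition}. The only cosmetic differences are your contradiction framing and the explicit worst-case-ordering discussion for interleaved updates, which the paper handles implicitly through monotonicity.
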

\begin{proof}
    Suppose there are $n_1$ Type I updates performed by \cref{alg:SI-NNLS-analysis}  before  the $k_0^{\mathrm{th}}$ iteration, when \cref{eq:k_0condition} starts to hold. Further, by \cref{lem:ak+1>ak},  $a_k$ is monotonically increasing (for both types of updates). Then, when considering Type I updates (\cref{type1update}), we have $a_{k_0}\ge \left(\frac{n}{n-1}\right)^{n_1} a_2 =  \left(\frac{n}{n-1}\right)^{n_1}\cdot\frac{1}{\sqrt{2} n^{2.5}}$. In order for $a_{k_0} > \frac{n-1}{12 n^2}$, we only need to have $n_1>\log_{\frac{n}{n-1}} \left( \frac{\sqrt{n}(n-1)}{6\sqrt{2}}\right)$. In a similar fashion, combining the monotonicity of $A_k$ from \cref{lem:ak+1>ak} with the Type I update rule, we have
    $$A_{k_0}\ge a_2 \left(1+\frac{n}{n-1}+\left(\frac{n}{n-1}\right)^2+\dots+ \left(\frac{n}{n-1}\right)^{n_1}\right)> \frac{(\frac{n}{n-1})^{n_1}}{\sqrt{2} n^{2.5}}.$$  So, in order to have $A_{k_0} > \frac{1}{36}$ per  \cref{eq:k_0condition}, we only need to have $n_1 \geq \log_{\frac{n}{n-1}} (\frac{n^{2.5}}{18 \sqrt{2}})$. By using the approximation $1+x\leq e^x$ and combining the above two bounds, we get as soon as $n_1\ge \frac{3}{2}n\log n$, the inequality \eqref{eq:k_0condition} holds. 
\end{proof}

\begin{restatable}{proposition}{propRateOfGrowthOfAk}[Rate of change of $A_k$]\label[prop]{prop:RateOfGrowthOfAk}
    When $k\ge \frac{5}{2}n\log n$, we have $A_k \ge \frac{(k-\frac{5}{2}n\log n)^2}{36 n^2}$.
\end{restatable}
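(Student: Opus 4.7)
The plan is to reduce the claim to \cref{lem:tyep1updateNumber}. If I can exhibit an iteration $k_0 \le \frac{5}{2}n\log n$ at which condition \eqref{eq:k_0condition} first holds, then \cref{lem:tyep1updateNumber} delivers $A_k \ge \frac{(k - k_0 + n)^2}{36n^2}$ for every $k \ge k_0$. Combined with $k_0 \le \frac{5}{2}n\log n$, this yields $A_k \ge \frac{(k - \frac{5}{2}n\log n + n)^2}{36n^2} \ge \frac{(k - \frac{5}{2}n\log n)^2}{36n^2}$ for all $k \ge \frac{5}{2}n\log n \ge k_0$, which is exactly the proposition. The entire task thus reduces to establishing $k_0 \le \frac{5}{2}n\log n$.

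To prove this bound, I would separately count the two kinds of updates preceding $k_0$. \cref{lem:numTypeIUpdatesBeforek0} directly gives at most $\frac{3}{2}n\log n$ Type I updates before $k_0$. For Type II updates, I would argue by contradiction: suppose strictly more than $n$ Type II updates occurred before $k_0$. Then at the iteration $s_{n+1}$ of the $(n{+}1)$-st such update, \cref{lem:tyep2update} yields $a_{s_{n+1}} \ge \frac{n}{12n^2} > \frac{n-1}{12n^2}$ and $A_{s_{n+1}} \ge \frac{(n+1)^2}{36n^2} > \frac{1}{36}$, so \eqref{eq:k_0condition} already holds at $s_{n+1} < k_0$, contradicting the minimality of $k_0$. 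Hence at most $n$ Type II updates precede $k_0$, and summing the two counts gives $k_0 \le \frac{3}{2}n\log n + n + O(1)$.

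The last step is the arithmetic check that $\frac{3}{2}n\log n + n + O(1) \le \frac{5}{2}n\log n$, equivalently $n + O(1) \le n\log n$, which is tight but holds for all $n \ge 4$ (at $n=4$, $n\log n = 4\ln 4 \approx 5.55$ is just large enough to absorb the additive $n$ together with the bookkeeping constant coming from the initialization of $a_1,a_2$). This is precisely why \cref{thm:FinalAKGKBound} is stated under the assumption $n \ge 4$.

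The main obstacle is the pigeonhole bound on the Type II updates: unlike the Type I count, which follows from the geometric growth factor $\left(\frac{n}{n-1}\right)^{T_1}$ incurred by $a_k$ after $T_1$ consecutive Type I updates, the Type II bound is not encoded in any single lemma and requires reading \cref{lem:tyep2update} contrapositively against the minimality of $k_0$ to exclude trajectories with more than $n$ Type II steps. Everything else is either arithmetic or a direct citation of an earlier lemma.
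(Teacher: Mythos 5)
Your proposal is correct and follows essentially the same route as the paper's proof: bound the Type I updates before $k_0$ via \cref{lem:numTypeIUpdatesBeforek0}, bound the Type II updates by $n$ using \cref{lem:tyep2update} against the minimality of $k_0$, conclude $k_0 \le \frac{3}{2}n\log n + n \le \frac{5}{2}n\log n$, and invoke \cref{lem:tyep1updateNumber} for the quadratic growth thereafter. Your explicit contradiction argument at the $(n{+}1)$-st Type II update is in fact a slightly more careful rendering of the paper's terse remark that ``to meet the requirement in \eqref{eq:k_0condition} then, we can see that $t_2 \le n$.''
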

\begin{proof}
    Let there be $t_1$ Type I updates and $t_2$ Type II updates before the first iteration at which \cref{eq:k_0condition} holds, and let us call this iteration $k_0$. By the result of \cref{lem:numTypeIUpdatesBeforek0}, we have $t_1 \le \frac{3}{2} n\log n$. By the result of \cref{lem:tyep2update}, we must have $A_{k_0} \geq \frac{t_2^2}{c_1 n^2}$ and $a_{k_0} \geq \frac{t_2-1}{2\sqrt{c_1} n^2}$. To meet the requirement in \cref{eq:k_0condition} then, we can see that $t_2 \le n$. Therefore, $k_0 = t_1 + t_2 \le \frac{3}{2} n\log n+n \le \frac{5}{2} n \log n$. Having reached the $k_0^{\mathrm{th}}$ iteration, the result of \cref{lem:tyep1updateNumber} applies, and we have $A_k \geq \frac{(k-k_0)^2}{c_1 n^2}$.
\end{proof}

\section{Omitted Proofs from \cref{sec:Restart}: Restart Strategy}\label{sec:AppRestart}

\propLCPconnection* 
\begin{proof}
Observe first that, as $\mLambda^{-1}$ is a diagonal matrix with positive elements on the diagonal, the stated linear complementarity problem is equivalent to
\begin{equation}\label[eq]{eq:LCP-P}
    \nabla f(\x) \geq \0, \; \x \geq \0, \; \inprod{\nabla f(\x)}{\x} = 0. 
\end{equation}
By ~\cref{prop:properties-object}, these conditions hold for any solution of \cref{eq:main-problem}. In the opposite direction, suppose that the conditions from \cref{eq:LCP-P} hold for some $\x.$ Then applying these conditions for any $\u \geq \0$ gives
 \begin{equation*}
     \inprod{\nabla f(\x)}{\u - \x} = \inprod{\nabla f(\x)}{\u} \geq 0.
 \end{equation*}
But $\inprod{\nabla f(\x)}{\u - \x} \geq 0$ is the first-order optimality condition for~\eqref{eq:main-problem}, and so $\x$ solves~\eqref{eq:main-problem}. 
\end{proof}

\begin{restatable}{proposition}{proprleqsqrtf}\label[prop]{prop:rleqsqrtf}
For any $\x \in \R^n_+,$ $r(\x) \leq \sqrt{2n(f(\x) - f(\vxs))},$ where $\vxs \in \argmin_{\u \in \R^n_+}f(\u).$
\end{restatable}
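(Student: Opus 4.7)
The plan is to use the fact that $f$ is a quadratic with Hessian $\ma^\top \ma$ and that a projected-gradient-like step in the direction of $\mR(\x)$ with an appropriate step size yields a feasible point whose objective value lower-bounds $f(\x) - f(\vxs)$ by $r(\x)^2/(2n)$.

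First I would establish two key ``ingredient'' inequalities. The first is the standard natural-residual inequality $\inprod{\nabla f(\x)}{\mR(\x)} \geq r(\x)^2 = \|\mR(\x)\|_{\mLambda}^2$, which follows by examining $\mR(\x)$ coordinate-wise: writing $\lambda_j = \|\ma_{:j}\|_2^2$ and $g_j = [\nabla f(\x)]_j$, in the case $x_j - g_j/\lambda_j \geq 0$ we have $[\mR(\x)]_j = g_j/\lambda_j$ so $g_j[\mR(\x)]_j = \lambda_j [\mR(\x)]_j^2$, while in the case $x_j - g_j/\lambda_j < 0$ (which forces $g_j > \lambda_j x_j$) we have $[\mR(\x)]_j = x_j$ and thus $g_j[\mR(\x)]_j \geq \lambda_j x_j^2 = \lambda_j [\mR(\x)]_j^2$. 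The second ingredient is $\|\ma \v\|_2^2 \leq n\|\v\|_{\mLambda}^2$ for every $\v \in \R^n$; this is argued exactly as in the proof of Theorem~\ref{thm:FinalAKGKBound}, by observing that the matrix $\mLambda^{-1/2}\ma^\top\ma\mLambda^{-1/2}$ is PSD with all ones on the diagonal, hence its spectral norm is bounded by its trace $n$.

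Next, I would introduce the feasible comparison point $\u = \x - \alpha \mR(\x)$, noting that $\x - \mR(\x) = (\x - \mLambda^{-1}\nabla f(\x))_+ \geq \0$, and so by convexity of $\R^n_+$ we have $\u \geq \0$ for every $\alpha \in [0,1]$. Because $f$ is a pure quadratic with Hessian $\ma^\top \ma$, one can write exactly
\begin{equation*}
f(\x) - f(\u) \;=\; \alpha \inprod{\nabla f(\x)}{\mR(\x)} - \tfrac{\alpha^2}{2}\|\ma\mR(\x)\|_2^2 \;\geq\; \alpha\, r(\x)^2 - \tfrac{\alpha^2 n}{2}\, r(\x)^2,
\end{equation*}
where the inequality invokes both ingredients above. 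Optimizing the right-hand side over $\alpha \in (0,1]$ gives $\alpha^\star = 1/n \in (0,1]$, yielding $f(\x) - f(\u) \geq r(\x)^2/(2n)$. Since $\u \in \R^n_+$ and $\vxs$ minimizes $f$ over $\R^n_+$, we have $f(\u) \geq f(\vxs)$, whence $f(\x) - f(\vxs) \geq r(\x)^2/(2n)$, which rearranges to the claim.

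No step is particularly hard here; the only subtlety is recognizing that the ``effective Lipschitz constant'' controlling the quadratic term $\|\ma\mR(\x)\|_2^2$ relative to $\|\mR(\x)\|_{\mLambda}^2$ is exactly $n$ (and not the spectral norm of $\ma$), which is what makes the bound scale-invariant and precisely produces the factor $2n$ in the statement. The choice $\alpha = 1/n$ is what turns this scale-invariant smoothness into the correct constant.
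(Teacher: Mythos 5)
Your proposal is correct, and every step checks out: the coordinate-wise inequality $\inprod{\nabla f(\x)}{\mR(\x)} \geq \|\mR(\x)\|_{\mLambda}^2$ holds in both cases exactly as you argue, the trace bound gives $\|\ma\v\|_2^2 \leq n\|\v\|_{\mLambda}^2$, feasibility of $\u = (1-\alpha)\x + \alpha(\x-\mR(\x))$ follows from convexity of the orthant, and the exact quadratic expansion with $\alpha = 1/n$ delivers $f(\x)-f(\u) \geq r(\x)^2/(2n)$. However, your route differs from the paper's. The paper does not take a damped full-vector step: it moves \emph{only one coordinate}, namely $j^\star = \argmax_{j} |\mR_j(\x)|\,\|\ma_{:j}\|_2$, setting $\hat{x}_{j^\star} = x_{j^\star} - \mR_{j^\star}(\x)$ and leaving the rest unchanged. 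Along a single coordinate the quadratic term is governed \emph{exactly} by $\|\ma_{:j^\star}\|_2^2$, so your first ingredient alone gives $f(\hat\x) - f(\x) \leq -\tfrac{1}{2}\|\ma_{:j^\star}\|_2^2\,|\mR_{j^\star}(\x)|^2$, and the factor $1/n$ then enters simply by bounding the maximum of $\|\ma_{:j}\|_2^2\mR_j(\x)^2$ below by its average. The paper's argument is thus more self-contained (it never needs the spectral/trace bound on $\mLambda^{-1/2}\ma^\top\ma\mLambda^{-1/2}$, although that bound does appear elsewhere in the paper and you are right to borrow it), while yours is the standard sufficient-decrease argument for a full projected-gradient-type step and makes explicit that the "effective smoothness" of $f$ in the $\|\cdot\|_{\mLambda}$ geometry is $n$ --- i.e., that the $2n$ in the statement reflects worst-case coherence among the $n$ columns rather than any scale-dependent constant. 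Both derivations produce the identical constant.
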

\begin{proof}
Given $\x \in \R^n_+,$ consider $\hat{\x}$ defined as $\hat{x}_{j^\star} = x_{j^\star} - \mR_{j^\star}(\x),$ where $j^{\star} = \argmax_{1\leq j \leq n} |\mR_j(\x)|\cdot\|\ma_{:j}\|_2$, and $\hat{x}_j = x_j$ for $j \neq j^\star$. Then observing that
\begin{align*}
    f(\hat{\x}) - f(\x) =\;& \nabla_{j^\star} f(\x)([\hat{\x}]_{j^\star} - [\x]_{j^\star})+ \frac{\|\ma_{:j^{\star}}\|_2^2}{2}|[\hat{\x}]_{j^\star} - [\x]_{j^\star}|^2 \\
    \leq\;& - \frac{1}{2}|\mR_{j^\star}(\x)|^2\|\ma_{:j^\star}\|_2^2 \leq - \frac{1}{2n}\|\mR(\x)\|_{\mLambda}^2,
\end{align*}
and combining with $f(\hat{\x}) \geq f(\vxs),$ $r(\x) = \|\mR(\x)\|_{\mLambda},$ the claimed bound follows after a simple rearrangement. %
\end{proof}

\thmRestarts* 
\begin{proof}
Because each restart halves the natural residual $r(\x),$ it is immediate that the total number of restarts until $r(\xtktotal^k) \leq \epsilon$ is bounded by $\log(\frac{r(\x_0)}{\epsilon}).$ Thus, to prove the first (and main) part of the theorem, we only need to bound the number of iterations (and the overall number of arithmetic operations) of (Lazy) SI-NNLS+ in expectation. Hence, in the following, we only consider one run of SI-NNLS+ until the natural residual is halved. To keep the notation simple, we let $\x_0$ denote the initial point of SI-NNLS+ and $\xtk$ denote the output of SI-NNLS+ at iteration $k.$  If $r(\x_0) = 0,$ $\mathcal{A}$ halts immediately and the bound on the number of iterations holds trivially, so assume $r(\x_0) > 0.$ Using \cref{thm:FinalAKGKBound}, we have that $\forall k \geq 2,$ 
\begin{equation}\label[eq]{eq:res-sandwich}
    \E[A_{k}r^2(\widetilde{\x}_{k})] \leq {n\|\x_0 - \vxs\|_{\mLambda}^2} \leq \frac{n}{\mu^2}r^2(\x_0). 
\end{equation}
As $r^2(\cdot)$ is nonnegative, we can use Markov's inequality to bound the total number of iterations $\ktotal$ until $r(\xtktotal) \leq \frac{r(\x_0)}{2}.$ In particular, using \cref{eq:res-sandwich}, we get by Markov's inequality that $\Pr[\ktotal > k] \le  \Pr[r^2(\widetilde{\x}_{k}) \geq \frac{r^2(\x_0)}{4}] \leq \frac{4n}{\mu^2 A_k}.$ As $\ktotal$ is nonnegative, we can estimate its expectation using
\begin{align*}
    \E[\ktotal] &= \sum_{i=0}^{\infty}\Pr[\ktotal > i] \leq \sum_{i=0}^\infty \min\Big\{1, \frac{4n }{\mu^2 A_i}\Big\}\\
    &\leq \sum_{i=0}^{\lceil 12n\sqrt{n}+\frac{5}{2}n\log n/\mu \rceil}1 + \sum_{\lceil 12n\sqrt{n}/\mu +\frac{5}{2}n\log n\rceil + 1}^\infty \frac{4n }{\mu^2 A_i} \\
    &\le 24n\sqrt{n}/\mu+\frac{5}{2}n\log n+2,
\end{align*}
where in the last inequality we use the rate of $A_k$ from Proposition \ref{prop:RateOfGrowthOfAk}. 

In the lazy implementation of SI-NNLS+, as argued in Appendix~\ref{sec:AlgImplementation}, the expected cost of an iteration is $\frac{\mathrm{nnz}(\ma)}{n},$ which leads to the claimed bound on the number of arithmetic operations until $r(\x) \leq\epsilon.$ 

By using that $r(\x_0) \leq \sqrt{2n (f(\x_0)-f(\vxs))} = \sqrt{2n|f(\vxs)|}$, $f\left(\widetilde{\x}^{\ktotal_1}-\mR(\widetilde{\x}^{\ktotal_1})\right) - f(\vxs) \leq  \big( (n-1)+\frac{n+1}{\mu} \big) r^2(\widetilde{\x}^{\ktotal_1})$ (argued below),  the bound on the number of iterations until $f\left(\widetilde{\x}^{\ktotal_1}-\mR(\widetilde{\x}^{\ktotal_1})\right) - f(\vxs) \leq \bar{\epsilon}|f(\vxs)|$ have
\begin{align*}
    f\left(\widetilde{\x}^{\ktotal_1}-\mR(\widetilde{\x}^{\ktotal_1})\right) - f(\vxs)&\le   \big( (n-1)+\frac{n+1}{\mu} \big) r^2(\widetilde{\x}^{\ktotal_1})\\
    & \le \big( (n-1)+\frac{n+1}{\mu} \big) \frac{1}{2^{2 K_1}}r^2(\x_0)\\
    & \le \big( (n-1)+\frac{n+1}{\mu} \big) \frac{1}{2^{2 K_1}}2n| f(\vxs)|
\end{align*}
and by setting $K_1 = \frac{1}{2}\log_2 \frac{2n \big( (n-1)+\frac{n+1}{\mu}\big)}{\bar{\epsilon}}$, we have this bound.

Finally, it remains to argue that $f\left(\widetilde{\x}^{\ktotal_1}-\mR(\widetilde{\x}^{\ktotal_1})\right) - f(\vxs) \leq \big( (n-1)+\frac{n+1}{\mu} \big) r^2(\widetilde{\x}^{\ktotal_1})$. Observe that the definition of $\mR(\x)$ is equivalent to $\x - \bar{\x},$ where 
$$
   \bar{\x} = \argmin_{\u \in \R^n_+} \Big\{\inprod{\nabla f(\x)}{\u - \x} + \frac{1}{2}\|\u - \x\|_{\mLambda}^2\Big\}.
$$
By the first-order optimality of $\bar{\x}$ based on the equivalent definition of $\mR(\x)$ above, we have $\inprod{\nabla f(\x) + \mLambda(\bar{\x} - \x)}{\vxs - \bar{\x}} \geq 0.$ Rearranging, and using the definition of convexity of $f,$ we have
\begin{align*}
    f(\bar{\x}) - f(\vxs) \leq\;& \inprod{\nabla f(\bar{\x})}{\bar{\x} - \vxs}\\
    \leq &\; \inprod{\nabla f(\x) - \nabla f(\bar{\x}) + \mLambda(\bar{\x} - \x)}{\vxs - \bar{\x}}\\
    =&\; \inprod{(\ma^\top\ma - \mLambda)(\x - \bar{\x})}{ \vxs - \bar{\x}}\\
     =&\; \inprod{(\ma^\top\ma - \mLambda)\mR(\x)}{\mR(\x)}+ \inprod{(\ma^\top\ma - \mLambda)\mR(\x)}{\vxs-{\x} }\\
     =&\; \inprod{(\ma^\top\ma - \mLambda)\mR(\x)}{\mR(\x)}+ \inprod{\ma^\top\ma \mR(\x)}{\vxs-{\x} }- \inprod{\mLambda\mR(\x)}{\vxs-{\x} }\\
     \leq &\; (n-1) \|\mR(\x)\|_\mLambda^2+ (n+1)\|\mR(\x)\|_\mLambda\|\vx-\vxs\|_\mLambda\\
     \leq&\; \Big( (n-1)+\frac{n+1}{\mu}\Big)r^2(\x),
\end{align*}
where in the last inequality we have used the error bound from Theorem~\ref{thm:error-bnd}. 
\end{proof}

\section{Implementation Version of SI-NNLS+}\label{sec:AlgImplementation} 
Since Algorithm \ref{alg:SI-NNLS-analysis} explicitly updates $\xtk$ and $\ytk$ (of lengths $n$ and $m$ respectively), the per iteration cost is $O(m+n)$, which is unnecessarily high when the matrix $\ma$ is sparse. In this section, we show that by using a \emph{lazy} update strategy, we can efficiently implement Algorithm~\ref{alg:SI-NNLS-analysis} with overall complexity independent of the ambient dimension. To attain this result, we maintain implicit representations for $\widetilde{\vx}_k$, $\vy_k$, and  $\bar{\vy}_k$ by introducing two auxiliary variables that are amenable to efficient updates. 

\paragraph{Efficiently Updating the Primal Variable.} In \cref{lem:lazy}, we show that we can work with an implicit representation of $\widetilde{\vx}_k $ by introducing $\mathbf{r}_k$. 
\begin{lemma}\label{lem:lazy}
For $\{\widetilde{\vx}_k\}$ defined in \cref{eq:def-xktilde} (and simplified in Algorithm \ref{alg:SI-NNLS-analysis}), we have, for $k\geq 1$,
\begin{align}
 \widetilde{\vx}_k =\;&   \vx_k +  \frac{1}{A_k}\mathbf{r}_k,  \label[eq]{eq:lazy}
\end{align}
where $\x_k$ evolves as per \cref{alg:SI-NNLS-analysis}, $\mathbf{r}_1 = \mathbf{0}$ and, when $k \geq 1$, $\mathbf{r}_k = \mathbf{r}_{k-1} + ((n-1)a_k - A_{k-1})(\vx_k - \vx_{k-1})$.  
\end{lemma}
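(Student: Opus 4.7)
The plan is to prove the lemma by induction on $k$, using the recursive form of $\widetilde{\vx}_k$ that appears inside the \textbf{for} loop of \cref{alg:SI-NNLS-analysis}, namely
\[
\widetilde{\vx}_k \;=\; \tfrac{1}{A_k}\bigl[A_{k-1}\widetilde{\vx}_{k-1} + a_k\bigl(n\vx_k - (n-1)\vx_{k-1}\bigr)\bigr].
\]
For the base case $k=1$, note that $A_0 = 0$, $A_1 = a_1$, and $a_1^1 = a_1$, so the definition $\widetilde{\vx}_1 = \tfrac{1}{A_1}\sum_{i\in[1]}a_i^1 \vx_i$ from \cref{eq:def-xktilde} immediately gives $\widetilde{\vx}_1 = \vx_1$, which agrees with $\vx_1 + \tfrac{1}{A_1}\mathbf{r}_1$ since $\mathbf{r}_1 = \mathbf{0}$.

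For the inductive step, I would assume the claim holds for $k-1$, i.e.\ $\widetilde{\vx}_{k-1} = \vx_{k-1} + \tfrac{1}{A_{k-1}}\mathbf{r}_{k-1}$, so that $A_{k-1}\widetilde{\vx}_{k-1} = A_{k-1}\vx_{k-1} + \mathbf{r}_{k-1}$. Substituting into the recursion yields
\[
A_k \widetilde{\vx}_k \;=\; \mathbf{r}_{k-1} + \bigl(A_{k-1} - (n-1)a_k\bigr)\vx_{k-1} + n a_k \vx_k.
\]
On the other hand, $A_k\bigl(\vx_k + \tfrac{1}{A_k}\mathbf{r}_k\bigr) = A_k \vx_k + \mathbf{r}_{k-1} + \bigl((n-1)a_k - A_{k-1}\bigr)(\vx_k - \vx_{k-1})$ by the definition of $\mathbf{r}_k$. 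Expanding this expression and using $A_k = A_{k-1} + a_k$ gives exactly the right-hand side of the displayed identity above, which closes the induction.

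There is no substantive obstacle here: the argument is purely algebraic bookkeeping, and the only thing to watch is that the recursion for $\widetilde{\vx}_k$ is valid starting at $k=1$ with the initialization $\widetilde{\vx}_0 = \vx_0$ and $A_0 = 0$, so that the $k=1$ case of the lemma is consistent with both the summation definition in \cref{eq:def-xktilde} and with the recursive update used in \cref{alg:SI-NNLS-analysis}. Once the identity is established, its value in practice is that $\widetilde{\vx}_k$ need not be maintained explicitly: since $\vx_k$ and $\vx_{k-1}$ differ only in coordinate $j_k$ by \cref{enu:propXUpdateOneCoordinate}, the update $\mathbf{r}_k \leftarrow \mathbf{r}_{k-1} + ((n-1)a_k - A_{k-1})(\vx_k - \vx_{k-1})$ costs $O(1)$ rather than $O(n)$, which will be the key to getting a per-iteration complexity that scales with $\mathrm{nnz}(\ma)/n$ in expectation.
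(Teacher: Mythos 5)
Your proof is correct and follows essentially the same route as the paper's: induction on $k$, substituting the inductive hypothesis $A_{k-1}\widetilde{\vx}_{k-1} = A_{k-1}\vx_{k-1} + \mathbf{r}_{k-1}$ into the recursive update of $\widetilde{\vx}_k$ and matching terms using $A_k = A_{k-1} + a_k$ and the recursion for $\mathbf{r}_k$. Your base case, argued directly from the summation definition in \cref{eq:def-xktilde} (giving $\widetilde{\vx}_1 = \vx_1$, consistent with $\mathbf{r}_1 = \0$), is if anything slightly cleaner than the paper's.
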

\begin{proof}
We prove the lemma by induction. Using the facts that $\x_0 = \0,$  $\x_1 = \widetilde{\x}_1$, $\vs_1=\mathbf{0}$, $a_1 = A_1$, and $A_0 = 0,$ we have 
\begin{align}
\widetilde{\x}_{1} = \;& \frac{1}{A_1}\left(A_{0}\widetilde{\vx}_{0} + a_1 \big(n \vx_1 - (n-1)\vx_{0}\big)\right)  \nonumber \\
                   = \;& \frac{1}{A_1}\left(a_1\vx_1 + (n-1)a_1(\vx_1 -  \vx_{0})\right)   \nonumber \\
                   = \;&   \vx_1 + ((n-1)a_1 - A_0)(\vx_1 -  \vx_{0}). 
\end{align}

Assume for certain $k\ge 2,$ that Eq.~\eqref{eq:lazy} holds for $k-1$. Then, using the recursion of $\tilde{\vx}_k$ in Algorithm \ref{alg:SI-NNLS-analysis}, we have 
that for  $k \ge 3,$
\begin{align*}
A_{k}\widetilde{\vx}_k =\;& {A_{k-1}} \widetilde{\x}_{k-1} + a_k \vx_k + (n-1)a_k(\vx_k - \vx_{k-1})     \\
            =\;& A_{k-1}\x_{k-1} + \mathbf{r}_{k-1} +  a_k \vx_k + (n-1)a_k(\vx_k - \vx_{k-1})     \\
            =\;& A_{k-1}(\x_{k-1} - \vx_k + \vx_k) + \mathbf{r}_{k-1} +  a_k \vx_k + (n-1)a_k(\vx_k - \vx_{k-1})     \\
            =\;& A_{k-1}(\x_{k-1} - \vx_k ) +  A_{k-1} \vx_k  + \mathbf{r}_{k-1} +  a_k \vx_k + (n-1)a_k(\vx_k - \vx_{k-1})   \\
            =\;& A_k\vx_k +  \mathbf{r}_{k-1} + ((n-1)a_k - A_{k-1})(\vx_k - \vx_{k-1})\\
            =\;& A_k\vx_k +  \mathbf{r}_{k},
\end{align*}
as required.
\end{proof}

The expression for $\mathbf{r}_k$ in \cref{lem:lazy} shows that it can be updated at cost $O(1)$ as $\vx_k$ differs from $\vx_{k-1}$ only at one coordinate. Therefore, by \cref{eq:lazy} we need not compute $\tilde{\vx}_k$ in all iterations and can instead maintain $\mathbf{r}_k$. Along the same lines, we give an efficient implementation strategy for $\vy_k$ and $\bar{\vy}_k$ in the following discussion.  

\paragraph{Efficiently Updating the Dual Variable.} We now show how to update the dual variable efficiently. 
\begin{lemma}\label{lem:y-1}
Consider $\{\y_k\}$ and $\{\x_k\}$ evolving as per \cref{alg:SI-NNLS-analysis}. Then, for $k=1,$ we have $\vy_1 = \ma\vx_1$; for $k \ge 2,$ we have 
\begin{align}
\vy_k =\;& \frac{A_{k-1}}{A_k} \vy_{k-1} + \frac{a_k}{A_k}\ma\vx_k + \frac{(n-1)a_k}{A_k}\ma(\vx_k - \vx_{k-1}),    \\ 
\end{align}
\end{lemma}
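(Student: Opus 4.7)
The plan is to prove the recursion by directly solving for $\vy_k$ from its defining optimality condition and then telescoping. Since $\psitk(\vy)$ as defined in the algorithm is a strongly concave quadratic in $\vy$ (the $-\frac{A_k}{2}\|\vy\|_2^2$ term dominates), the maximizer $\vy_k$ is unique and characterized by $\nabla_{\vy}\psitk(\vy_k) = \mathbf{0}$. Computing the gradient term by term yields
\[
\nabla_{\vy}\psitk(\vy) = a_1 \ma \vx_1 + \sum_{i=2}^k a_i \ma\bigl(n\vx_i - (n-1)\vx_{i-1}\bigr) - A_k \vy,
\]
so setting this to zero gives the explicit closed form
\[
A_k \vy_k = a_1 \ma \vx_1 + \sum_{i=2}^k a_i \ma\bigl(n\vx_i - (n-1)\vx_{i-1}\bigr).
\]

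For the base case $k=1$, only the first term survives and $A_1 = a_1$, yielding immediately $\vy_1 = \ma \vx_1$. For $k \geq 2$, I would form the difference $A_k \vy_k - A_{k-1}\vy_{k-1}$ using the closed-form expression above; all terms up to index $k-1$ cancel, leaving
\[
A_k \vy_k - A_{k-1}\vy_{k-1} = a_k \ma\bigl(n\vx_k - (n-1)\vx_{k-1}\bigr).
\]
Then I would rewrite $n\vx_k - (n-1)\vx_{k-1} = \vx_k + (n-1)(\vx_k - \vx_{k-1})$, which upon division by $A_k$ gives exactly the claimed recursion
\[
\vy_k = \frac{A_{k-1}}{A_k}\vy_{k-1} + \frac{a_k}{A_k}\ma\vx_k + \frac{(n-1)a_k}{A_k}\ma(\vx_k - \vx_{k-1}).
\]

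There is no real obstacle here: the proof is a direct, routine calculation from the definition of $\vy_k$ as the unconstrained maximizer of a strongly concave quadratic. The only subtlety is being careful with index bookkeeping (in particular, that the first summand is indexed by $i=1$ with a special coefficient, which is why the $k=1$ case must be handled separately before the telescoping argument applies for $k \geq 2$). The usefulness of this recursion for the lazy implementation then follows because $\vx_k - \vx_{k-1}$ is supported only on coordinate $j_k$ (by \cref{enu:propXUpdateOneCoordinate}), so $\ma(\vx_k - \vx_{k-1})$ touches only $\mathrm{nnz}(\ma_{:j_k})$ entries; a further implicit representation (analogous to \cref{lem:lazy}) will be needed in the follow-up lemma to avoid rescaling the full vector $\vy_{k-1}$ by $A_{k-1}/A_k$ every iteration.
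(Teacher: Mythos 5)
Your proof is correct and takes essentially the same route as the paper: the paper's proof is a one-line remark that the recursion follows directly from the definition of $\vy_k$ in \cref{alg:SI-NNLS-analysis}, and your argument simply spells out that computation (first-order condition for the strongly concave quadratic $\psitk$, then differencing $A_k\vy_k - A_{k-1}\vy_{k-1}$). All steps check out, including the base case via $A_1 = a_1$ and the rewriting $n\vx_k - (n-1)\vx_{k-1} = \vx_k + (n-1)(\vx_k - \vx_{k-1})$.
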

\begin{proof}
The proof is directly from the definition of $\vy_k$ in Algorithm \ref{alg:SI-NNLS-analysis}.
\end{proof}

\begin{lemma}\label{lem:y-2} Consider $\{\y_k\}$ and $\{\x_k\}$ evolving as per \cref{alg:SI-NNLS-analysis}. Then for all $ k \ge 1,$ we have
\begin{align}
\vy_k = \ma\vx_k +\frac{1}{A_k}\vs_k,    \label[eq]{eq:y-s}
\end{align}
where $\vs_1 = 0$ and $\vs_k = \vs_{k-1} + ((n-1)a_k - A_{k-1})\ma(\vx_k - \vx_{k-1})$ when $k \ge 2.$
\end{lemma}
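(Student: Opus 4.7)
The plan is to prove \cref{lem:y-2} by induction on $k$, mimicking the proof strategy used for the primal variable in \cref{lem:lazy} and leveraging the recursion established in \cref{lem:y-1}.

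For the base case $k=1$, I would combine $\vs_1 = \0$ (by definition) with $\vy_1 = \ma\vx_1$ (from \cref{lem:y-1}) to immediately obtain $\vy_1 = \ma\vx_1 + \tfrac{1}{A_1}\vs_1$, establishing \cref{eq:y-s}.

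For the inductive step, I would assume $\vy_{k-1} = \ma\vx_{k-1} + \tfrac{1}{A_{k-1}}\vs_{k-1}$ for some $k \geq 2$ and start from the recursion in \cref{lem:y-1}, multiplied through by $A_k$:
\[
A_k \vy_k = A_{k-1}\vy_{k-1} + a_k \ma\vx_k + (n-1)a_k\, \ma(\vx_k - \vx_{k-1}).
\]
Substituting the inductive hypothesis $A_{k-1}\vy_{k-1} = A_{k-1}\ma\vx_{k-1} + \vs_{k-1}$ and rewriting $A_{k-1}\ma\vx_{k-1} = A_{k-1}\ma\vx_k - A_{k-1}\ma(\vx_k - \vx_{k-1})$, the right-hand side becomes
\[
A_{k-1}\ma\vx_k + a_k\ma\vx_k + \vs_{k-1} + \bigl((n-1)a_k - A_{k-1}\bigr)\ma(\vx_k - \vx_{k-1}).
\]
Using $A_{k-1} + a_k = A_k$ on the first two terms and recognizing the definition of $\vs_k$ in the remainder yields $A_k \vy_k = A_k \ma\vx_k + \vs_k$, and dividing by $A_k$ completes the induction.

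This is a straightforward bookkeeping argument — no genuine obstacle, as the nonrecursive identity in \cref{eq:y-s} is engineered precisely to absorb the extrapolation-style terms $(n-1)a_k\,\ma(\vx_k - \vx_{k-1})$ that appear in the recursion of \cref{lem:y-1}. The only subtle point is keeping track of the coefficient $(n-1)a_k - A_{k-1}$ (which exactly parallels the coefficient in \cref{lem:lazy}), and verifying that the algebraic cancellation is consistent between the $A_{k-1}\ma\vx_{k-1}$ and $A_k\ma\vx_k$ terms. Once the two lazy-update lemmas (\cref{lem:lazy} and \cref{lem:y-2}) are in place, the per-iteration cost drops from $O(m+n)$ to being proportional only to $\mathrm{nnz}(\ma_{:j_k})$, since both $\vs_k$ and $\mathbf{r}_k$ admit sparse updates driven by the single-coordinate change $\vx_k - \vx_{k-1} = \gamma\, \ejk$.
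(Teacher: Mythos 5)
Your proposal is correct and follows essentially the same induction as the paper: the base case uses $\vs_1=\0$ and $\vy_1=\ma\vx_1$, and the inductive step starts from the scaled recursion of \cref{lem:y-1}, substitutes $A_{k-1}\vy_{k-1}=A_{k-1}\ma\vx_{k-1}+\vs_{k-1}$, and regroups $A_{k-1}\ma\vx_{k-1}$ as $A_{k-1}\ma\vx_k - A_{k-1}\ma(\vx_k-\vx_{k-1})$ to recover the update for $\vs_k$. No gaps; this matches the paper's argument step for step.
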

\begin{proof}
We prove the lemma by induction. For the base case of $k = 1, $ we have, by the choice of $\vs_1=\0$, that $\vy_1 = \ma\vx_1 = \ma\vx_1  + \frac{1}{A_1}\vs_1.$ Then for some $k\ge 2,$ assume Eq.~\eqref{eq:y-s} holds for $k-1$, then we have, 
\begin{align}
 A_k\vy_k  =\;&  A_{k-1}\vy_{k-1} +  a_k\ma\vx_k + (n-1)a_k\ma(\vx_k - \vx_{k-1})  \nonumber  \\   
  =\;&   A_{k-1}\ma\vx_{k-1} + \vs_{k-1} +  a_k\ma\vx_k + (n-1)a_k\ma(\vx_k - \vx_{k-1})  \nonumber  \\ 
  =\;& A_{k-1}\ma(\vx_{k-1} - \vx_k + \vx_k)  + \vs_{k-1} +  a_k\ma\vx_k + (n-1)a_k\ma(\vx_k - \vx_{k-1})  \nonumber  \\ 
  =\;&  A_{k}\ma\vx_k + \vs_{k-1} + ((n-1)a_k - A_{k-1})\ma(\vx_k - \vx_{k-1})  \nonumber   \\
  =\;&  A_{k}\ma\vx_k + \vs_{k},
\end{align}
where the first step is by \cref{lem:y-1}, second step is by the induction hypothesis, third step is by adding and subtracting $A_{k-1} \ma \x_k$, fourth step is by rearranging terms appropriately, and the final step uses the recursive definition of $\vs_k$ stated in the lemma. Dividing throughout by $A_k$ then finishes the proof. 
\end{proof}

\begin{algorithm*}[ht]\caption{SI-NNLS+ (Implementation)}\label[alg]{alg:SI-NNLS-impl}
\begin{algorithmic}
  \STATE {\bfseries Input:} Matrix $\ma \in \R_+^{m \times n}$ with $n \geq 4$, accuracy $\epsilon$
  \STATE {\bfseries Output:} Vector $\xtktotal\in \R^n_+$ such that $f(\xtktotal) \leq (1+\epsilon)|f(\vxs)|.$
  \STATE Initialize: $a_1 = \frac{1}{n -1}$, $a_2 = \frac{n}{n-1}$, $A_1 = a_1$, ${\phi}_0(\x) = \frac{1}{2}\|\vx - \vx_0\|^2_{\mLambda}$,  $\overline{\y}_{0} = \y_0 = \ma\vx_0$, $\mathbf{p}_0 = \0, \mathbf{q}_0 = \ma\vx_0, \mathbf{t}_0 = \0, \mathbf{s}_1 = \0, \mathbf{r}_1 = \0.$
  \FOR{$k = 1, 2, \dots, \ktotal$}
  \STATE Sample $j_k$ uniformly at random from $ \{1, 2, \dots, n\}$
  \IF{ $k = 1$  }
    \STATE $\bar{\vy}_0 = \mathbf{q}_{0} $        
    \ELSIF{$k=2$}
    \STATE $\bar{\vy}_1 = \mathbf{q}_{1} + \frac{a_1}{a_2}\mathbf{t}_1$      
  \ELSIF{$k\ge 3$}
  \STATE $\bar{\vy}_{k-1} =\mathbf{q}_{k-1}  + \frac{1}{A_{k-1}} \Big(1 -  \frac{a_{k-1}^2}{a_{k}A_{k-2}} \Big)\vs_{k-1} + \frac{(n-1)a_{k-1}^2}{a_{k}A_{k-2}}\mathbf{t}_{k-1}$
  \ENDIF
   \STATE $p_{k,i}  = \begin{cases}
            p_{k-1,i},  & i \neq j_k  \\
            p_{k-1,i} + na_k\big( \ma_{:i}^T\bar{\vy}_{k-1} - 1\big),  & i = j_k.
    \end{cases} $
    \STATE $x_{k,i} = \begin{cases} 
                    x_{k-1,i},  & i \neq j_k  \\
                    \max\big\{0,  \min\big\{ x_{0,i} -\frac{1}{\|\ma_{:i}\|^2} \cdot p_{k,i}, \frac{1}{\|\ma_{:i}\|^2}\big\}\big\}, & i = j_k     
                 \end{cases}
    $
  \STATE $\mathbf{t}_k = \ma(\vx_{k} - \vx_{k-1})  $
  \IF{$k\ge 2$}
  \STATE $\mathbf{r}_k =  \mathbf{r}_{k-1} + ((n-1)a_k- A_{k-1})(\vx_k-\vx_{k-1})$
  \STATE $\vs_k = \vs_{k-1} + ((n-1)a_k - A_{k-1})\mathbf{t}_k$
  \ENDIF 
  \STATE $\mathbf{q}_k  = \mathbf{q}_{k-1}+ \mathbf{t}_k$
  \STATE $A_{k+1} = A_{k} + a_{k+1}$
  \STATE  $a_{k+2}=\min\{\frac{n a_{k+1}}{n-1},\frac{\sqrt{A_{k+1}}}{2n}\}$
  \ENDFOR
  \STATE \textbf{return} $\vx_K + \frac{1}{A_K}\mathbf{r}_K$  
\end{algorithmic}
\end{algorithm*}

\begin{lemma}\label{lem:y-3}
Consider $\{\x_k\}$, $\{\y_k\}$, and $\{\overline{\y}_k\}$ evolving as per \cref{alg:SI-NNLS-analysis}. Then we have that 
\begin{align}
\bar{\vy}_1 = \ma\vx_1 +  \frac{a_1}{a_2}\ma(\vx_1 - \vx_0).    
\end{align}
and
\begin{align}
\bar{\vy}_k =\;& \ma\vx_k  + \frac{1}{A_k} \Big(1 -  \frac{a_k^2}{a_{k+1}A_{k-1}} \Big)\vs_k + \frac{(n-1)a_k^2}{a_{k+1}A_{k-1}}\ma(\vx_k - \vx_{k-1}). 
\end{align}
\end{lemma}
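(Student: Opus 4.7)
The plan is to derive both identities directly from the update rule $\bar{\vy}_k = \vy_k + \tfrac{a_k}{a_{k+1}}(\vy_k - \vy_{k-1})$ stated in \cref{alg:SI-NNLS-analysis}, using the implicit representations of $\vy_k$ established in \cref{lem:y-1} and \cref{lem:y-2}, together with the recursion $\vs_k = \vs_{k-1} + ((n-1)a_k - A_{k-1})\ma(\vx_k - \vx_{k-1})$.

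For the base case $k=1$, I would simply plug $\vy_1 = \ma\vx_1$ and $\vy_0 = \ma\vx_0$ into $\bar{\vy}_1 = \vy_1 + \tfrac{a_1}{a_2}(\vy_1 - \vy_0)$, which immediately yields the first claim.

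For $k \ge 2$, I would substitute $\vy_k = \ma\vx_k + \tfrac{1}{A_k}\vs_k$ and $\vy_{k-1} = \ma\vx_{k-1} + \tfrac{1}{A_{k-1}}\vs_{k-1}$ from \cref{lem:y-2} into the extrapolation rule. This gives an expression involving $\ma(\vx_k - \vx_{k-1})$, $\vs_k$, and $\vs_{k-1}$. To eliminate $\vs_{k-1}$ and match the target form (which involves only $\vs_k$ and $\ma(\vx_k-\vx_{k-1})$), I would invert the recursion for $\vs_k$ to write $\vs_{k-1} = \vs_k - ((n-1)a_k - A_{k-1})\ma(\vx_k - \vx_{k-1})$, and substitute.

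Collecting like terms, the main technical step is then verifying that the coefficient of $\vs_k$ simplifies to $\tfrac{1}{A_k}\bigl(1 - \tfrac{a_k^2}{a_{k+1}A_{k-1}}\bigr)$ and the coefficient of $\ma(\vx_k - \vx_{k-1})$ simplifies to $\tfrac{(n-1)a_k^2}{a_{k+1}A_{k-1}}$. For the $\vs_k$ coefficient, clearing denominators to $a_{k+1}A_k A_{k-1}$ and using $A_k = A_{k-1} + a_k$ collapses the numerator to $a_{k+1}A_{k-1} - a_k^2$, giving the claimed form. For the $\ma(\vx_k-\vx_{k-1})$ coefficient, the $a_k A_{k-1}$ terms cancel, leaving $(n-1)a_k^2/(a_{k+1}A_{k-1})$. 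There is no real obstacle here beyond bookkeeping; the entire argument is algebraic and mirrors the proof of \cref{lem:y-2}, with the only subtlety being the careful use of $A_k = A_{k-1} + a_k$ in consolidating the $\vs_k$ coefficient.
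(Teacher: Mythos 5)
Your proposal is correct and the algebra checks out: substituting the Lemma~\ref{lem:y-2} representations for both $\vy_k$ and $\vy_{k-1}$, eliminating $\vs_{k-1}$ via the inverted recursion, and using $A_k = A_{k-1}+a_k$ does yield exactly the claimed coefficients (the base case $k=2$ is covered since $\vs_1=\0$). This is essentially the paper's argument with the bookkeeping organized slightly differently — the paper instead isolates $\vy_k - \vy_{k-1}$ from the Lemma~\ref{lem:y-1} recursion, which expresses it in terms of $\vy_k$ itself, and only then substitutes $\vy_k = \ma\vx_k + \tfrac{1}{A_k}\vs_k$ once at the end; both routes are equivalent direct verifications.
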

\begin{proof}
From the definition of $\bar{\vy}_k$, the initializations for $\x_0, \y_0,$ and $\overline{\y}_0$, and  Lemma \ref{lem:y-1}, we have 
\begin{align}
\bar{\vy}_1 = \vy_1 + \frac{a_1}{a_2}(\vy_1 - \vy_0) = \ma\vx_1 +  \frac{a_1}{a_2}\ma(\vx_1 - \vx_0).      \nonumber
\end{align}
For $k\ge 2,$ by Lemma \ref{lem:y-1}, we have 
\begin{align}
A_k\vy_k - A_{k-1}\vy_{k-1} =\;&  a_k\ma\vx_k + (n-1)a_k\ma(\vx_k - \vx_{k-1}).  \nonumber
\end{align}
As a result, 
\begin{equation}\label[eq]{eq:efficientDualUpdate-1}
A_{k-1}(\vy_k - \vy_{k-1}) = a_k\ma\vx_k + (n-1)a_k\ma(\vx_k - \vx_{k-1}) - a_k\vy_k.  
\end{equation}
So for $k\ge 2,$ it follows that
\begin{align}
\bar{\vy}_k =\;& \yk + \frac{a_{k}}{a_{k+1}} (\yk - \ykm)  \nonumber \\ 
            =\;& \yk +  \frac{a_{k}}{a_{k+1}}\Big( \frac{a_k}{A_{k-1}} \ma\vx_k  +                           \frac{(n-1)a_k}{A_{k-1}}\ma(\vx_k - \vx_{k-1})  - \frac{a_k}{A_{k-1}}\vy_k\Big) \nonumber \\  
            =\;& \Big(1 -  \frac{a_k^2}{a_{k+1}A_{k-1}} \Big)\vy_k + \frac{a_k^2}{a_{k+1}A_{k-1}}\ma\vx_k + \frac{(n-1)a_k^2}{a_{k+1}A_{k-1}}\ma(\vx_k - \vx_{k-1})  \nonumber\\ 
            =\;& \ma\vx_k  + \frac{1}{A_k} \Big(1 -  \frac{a_k^2}{a_{k+1}A_{k-1}} \Big)\vs_k + \frac{(n-1)a_k^2}{a_{k+1}A_{k-1}}\ma(\vx_k - \vx_{k-1}), \nonumber
\end{align}
where the first step is by the definition of $\overline{\y}_k$ in \cref{alg:SI-NNLS-analysis}, the second step is by \cref{eq:efficientDualUpdate-1}, the third step is by rearranging, and the final step is by \cref{lem:y-2}. 
\end{proof}

Based on the above lemmas, we give our efficient lazy implementation version of Algorithm \ref{alg:SI-NNLS-analysis} in Algorithm \ref{alg:SI-NNLS-impl}. In  Algorithm \ref{alg:SI-NNLS-impl}, we also introduce other auxiliary variables $\mathbf{p}_k, \mathbf{q}_k$ and $\mathbf{t}_k$. Based on Lemmas \ref{lem:lazy}-\ref{lem:y-3}, it is easy to verify the equivalence between Algorithms \ref{alg:SI-NNLS-analysis}  and \ref{alg:SI-NNLS-impl}. With this implementation, by  updating only the dual coordinates corresponding to the nonzero coordinates of the selected column of $\ma,$ the per-iteration cost is proportional to the number of nonzero elements of the selected row in the iteration. As a result, the overall complexity result will  depend only on the number of nonzero elements of $\ma.$

\ifdefined\isicml
\else
\end{appendices}
\fi 
\end{document}